\documentclass[12pt]{amsart}
\usepackage[left=1.5in, right=1.5in, top=1.5in, bottom=1.5in]{geometry}                
\geometry{letterpaper}                   
\newtheorem{thm}{Theorem}[section]
\newtheorem{cor}[thm]{Corollary}

\theoremstyle{definition}

\newtheorem{rem}[thm]{Remark}
\usepackage{graphicx}
\usepackage{amssymb}
\usepackage{amsmath}
\usepackage{epstopdf}
\usepackage{hyperref}
\DeclareGraphicsRule{.tif}{png}{.png}{`convert #1 `dirname #1`/`basename #1 .tif`.png}

\title[The regular polygon]{On the first eigenvalue of the Laplacian for polygons}  
\author{Emanuel Indrei}
\address{Department of Mathematics\\
Purdue University\\
West Lafayette, Indiana \\
USA.}


\begin{document}
\setcounter{page}{1}
\pagenumbering{arabic}
\maketitle

\begin{abstract}

In 1947, P\'olya proved that if $n=3,4$ the regular polygon $P_n$ minimizes the principal frequency of an n-gon with given area $\alpha>0$ and suggested that the same holds when $n \ge 5$. In $1951,$ P\'olya \& Szeg\"o discussed the possibility of counterexamples in the book ``Isoperimetric Inequalities In Mathematical Physics." This paper constructs explicit $(2n-4)$--dimensional polygonal manifolds $\mathcal{M}(n, \alpha)$ and proves the existence of a computable $N \ge 5$ such that for all $n \ge N$, the admissible $n$-gons are given via $\mathcal{M}(n, \alpha)$ and there exists an explicit set $ \mathcal{A}_{n}(\alpha) \subset \mathcal{M}(n,\alpha)$ such that $P_n$ has the smallest principal frequency among $n$-gons in $\mathcal{A}_{n}(\alpha)$. Inter-alia when $n \ge 3$, a formula is proved for the principal frequency of a convex $P \in \mathcal{M}(n,\alpha)$ in terms of an equilateral $n$-gon with the same area; and, the set of equilateral polygons is proved to be an $(n-3)$--dimensional submanifold of the $(2n-4)$--dimensional manifold $\mathcal{M}(n,\alpha)$ near $P_n$. If $n=3$, the formula completely addresses a 2006 conjecture of Antunes and Freitas and another problem mentioned in ``Isoperimetric Inequalities In Mathematical Physics." Moreover, a solution to the sharp polygonal Faber-Krahn stability problem for triangles is given and with an explicit constant. The techniques involve a partial symmetrization, tensor calculus, the spectral theory of circulant matrices, and $W^{2,p}/BMO$ estimates. Last, an application is given in the context of electron bubbles.

\end{abstract}

\vskip .3in

\section{Introduction}

The principal frequency $\Lambda$ of a domain $\Omega$ is the frequency of the gravest proper tone of a uniform and uniformly stretched elastic membrane in equilibrium and fixed along the boundary $\partial \Omega$. In 1877, Lord Rayleigh observed that of all membranes with a given area, the circle generates the minimum principal frequency. This discovery was due to numerical evidence and a computation of the principal frequency of almost circular membranes. Mathematically, this property was known as the Rayleigh conjecture. Faber (1923) and Krahn (1925) found essentially the same proof for Rayleigh's conjecture.  

If $\Omega=B_1$ the principal frequency is the first positive root of the Bessel function 
$$
\Lambda=2.4048\ldots \hskip .05 in.
$$
Nevertheless, the principal frequency of a triangle is generally inaccessible via known formulas. In 1947 \footnote{The article was received Dec. 20, 1947 and published in 1948.} P\'olya proved that of all triangular membranes $T$ with a given area, the equilateral triangle has the lowest principal frequency \cite{MR26817}. A formula exists for the principal frequency of an equilateral triangle and therefore a lower bound on $\Lambda(T)$ is accessible. In the same article, P\'olya proved: of all quadrilaterals with a given area, the square has the smallest principal frequency. An essential tool in his proof is Steiner symmetrization: the set is transformed into a set of the same area with at least a line of symmetry. However, if one considers polygons with $n\ge 5$ sides, the technique in general increases the number of sides. P\'olya mentions in the article (p. $277$) that 
``It is natural to suspect that the propositions just proved about the equilateral triangle and square can be extended to regular polygons with more than four sides."  

The classical $1951$ book ``Isoperimetric Inequalities In Mathematical Physics" written by P\'olya and Szeg\"o mentions the problem in a less confident way \cite[p. 159]{pPZp}:
``to prove (or disprove) the analogous theorems for regular polygons with more than four sides is a challenging task." Afterwards, the problem was known as the P\'olya-Szeg\"o conjecture. In the book, there are several related problems. One of them is to prove the analog for the logarithmic capacity and this was solved in $2004$ by Solynin and Zalgaller \cite{MR2052355}. In 2006, Antunes and Freitas \cite{MR2264470} write about the principal frequency problem that ``no progress whatsoever has been made on this problem over the last forty years."

In addition, Henrot includes the principal frequency problem as Open problem 2, subsection 3.3.3 A challenging open problem in the book ``Extremum Problems for Eigenvalues of Elliptic Operators" \cite{MR2251558} where he writes (p. $51$)
``a beautiful (and hard) challenge is to solve the Open problem 2."

Recently, Bogosel and Bucur \cite{oeq4} showed that the local minimality of the convex regular polygon can be reduced to a single certified numerical
computation. For $n = 5, 6, 7, 8$ they performed this computation and certified the numerical approximation
by finite elements up to machine errors. However, since the formulation of the problem no theorem asserting minimality was proved.

My main theorem in this paper explicitly constructs sets for all sufficiently large $n$ such that the principal frequency is minimized by the regular convex $n$-gon in the collection of $n$-gons with the same area having vertices in these sets mod rotations and translations.  

\pagebreak
 \begin{thm}[A local set for which $P_n$ is the minimizer] \label{a}
 There exists a computable $N \ge 5$ such that for all $n \ge N$, if $P_n$ is the convex regular $n$-gon having vertices $\{w_i\}_{i=1}^n$ there exist explicit $\mathcal{A}_{n, i} \subset B_{a_n}(w_i)$, $a_n>0$, such that the minimizer of the principal frequency among $n$-gons with fixed area having vertices in $\bigotimes_{i=1}^{n}  \mathcal{A}_{n, i}$ (mod rigid motions) is $P_n$. 
\end{thm}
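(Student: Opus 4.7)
The plan is to convert the local minimality problem into a finite-dimensional constrained optimization on $\mathcal{M}(n,\alpha)$ and then extract an explicit radius via a quantitative second-variation argument. Concretely, I parametrize admissible polygons near $P_n$ by vertex perturbations $\xi=(\xi_1,\ldots,\xi_n)\in(\mathbb{R}^2)^n$, with $\xi_i$ attached to $w_i$; the area constraint removes one degree of freedom and the $3$-dimensional rigid-motion group removes three more, leaving the $(2n-4)$-dimensional manifold referenced in the abstract. Setting $\Lambda(\xi)$ to be the principal frequency of the polygon with vertices $w_i+\xi_i$, I aim to show that on a neighborhood $\bigotimes_{i=1}^n\mathcal{A}_{n,i}$ of the origin intersected with the constraint manifold, $\xi=0$ is the unique minimizer. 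Writing this as $\Lambda(\xi)\ge \Lambda(P_n)$ with equality iff $\xi=0$ reduces the theorem to a quantitative strict local minimality statement.

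The first step is to establish that $P_n$ is a critical point. Using Hadamard's shape-derivative formula, the first variation of $\Lambda$ under a normal boundary displacement $V$ is given by $-\int_{\partial P_n}|\partial_\nu u_1|^2 V\cdot\nu\,d\mathcal{H}^1$, where $u_1$ is the $L^2$-normalized first eigenfunction. The dihedral symmetry $D_n$ of $P_n$ forces $|\partial_\nu u_1|^2$ to be equivariant with respect to the symmetry group, so the first variation vanishes on every vertex perturbation that respects the area and rigid-motion constraints. The second step is to compute the Hessian $H_n$ of $\Lambda$ with respect to vertex perturbations at $\xi=0$. By the second-order Hadamard formula this involves the normal derivative of the shape derivative of $u_1$, together with curvature-free boundary terms (each side of $P_n$ is flat, which simplifies the calculation substantially). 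Because the ordering $w_1,\ldots,w_n$ is cyclic and each block of $H_n$ depends only on the relative index $i-j\pmod{n}$, the $2n\times 2n$ matrix $H_n$ is a block-circulant with $2\times 2$ blocks. The core computation is then to diagonalize $H_n$ via the discrete Fourier transform: $H_n$ becomes block-diagonal with $n$ blocks $\hat H_n(k)$, $k=0,\ldots,n-1$, and the kernel of $\hat H_n$ collects exactly the rigid-motion and area modes (Fourier frequencies $k=0$ and $k=\pm1$), while all other blocks must be shown to be strictly positive definite.

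The central analytic step is therefore the quantitative lower bound $\hat H_n(k)\succeq c_n\,\mathrm{Id}$ for $k\notin\{0,\pm1\}$. I would use the explicit form of the shape derivative of $u_1$ on $P_n$, sector-by-sector reflection symmetry, and the spectral-gap information available from the equilateral-based formula asserted for $P\in\mathcal{M}(n,\alpha)$ earlier in the paper. The effective constant $c_n$ can be tracked in $n$ via asymptotic comparison with the disk, where the corresponding circulant spectrum is known; this is what delivers a computable threshold $N$ beyond which the positive blocks dominate. Finally, I would upgrade the infinitesimal positivity to a neighborhood statement by controlling $\Lambda(\xi)-\Lambda(P_n)-\tfrac12\xi^\top H_n\xi$: applying $W^{2,p}$ and $\mathrm{BMO}$ estimates (in the spirit of the techniques flagged in the abstract) to the eigenfunction under a bi-Lipschitz diffeomorphism sending the perturbed polygon back to $P_n$, one obtains a remainder of order $o(|\xi|^2)$ with a quantitative modulus, and hence an explicit $a_n>0$ such that $\Lambda(\xi)>\Lambda(P_n)$ whenever $0<|\xi|<a_n$ and $\xi$ satisfies the area constraint. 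Setting $\mathcal{A}_{n,i}:=B_{a_n}(w_i)$ intersected with the explicit slice cutting out rigid motions then completes the proof.

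The principal obstacle is the quantitative diagonalization of $H_n$: extracting from the block-circulant structure a lower bound on the positive Fourier modes that is both explicit in $n$ and large enough to absorb the higher-order remainder produced by the $W^{2,p}/\mathrm{BMO}$ step. This requires detailed information about $\partial_\nu u_1$ on $P_n$ (which is not available in closed form for $n\ge 5$), so the argument must proceed through comparison with the disk's eigenfunction and careful perturbation bounds; identifying a computable $N$ at which the positive spectral gap provably exceeds the remainder is the delicate quantitative heart of the theorem.
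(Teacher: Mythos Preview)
Your plan is a genuinely different strategy from the paper's, and it is worth saying clearly what each does. You propose a direct second-variation analysis at $P_n$: compute the full Hessian $H_n$ of $\lambda$ in vertex coordinates, exploit its block-circulant structure to diagonalize by the discrete Fourier transform, and establish positivity of the nontrivial Fourier blocks by asymptotic comparison with the disk. This is essentially the Bogosel--Bucur program cited in the introduction, which reduces local minimality to a certified numerical computation of exactly that Hessian spectrum. The paper, by contrast, never analyzes the full Hessian. It introduces an iterated \emph{partial Steiner symmetrization}: for three consecutive vertices $v_1,v_2,v_3$, replace $v_2$ by its reflection across the perpendicular bisector of $\overline{v_1v_3}$, and cycle through the vertices to produce a sequence $P=P^1,P^2,\ldots\to P^\infty$. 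Along each one-parameter flow $t\mapsto P_t$ connecting $P^{k+1}$ to $P^k$, the moving-surface calculus yields explicit formulas for $\frac{d\lambda}{dt}$ and $\frac{d^2\lambda}{dt^2}$ as integrals of $|\nabla u|^2$ over two moving edges. The decisive quantitative fact is that $\frac{d^2\lambda}{dt^2}\big|_{t=0}\gtrsim \frac{b/2}{\xi^2+(b/2)^2}\sim n$, which blows up, while $\big|\frac{d\lambda}{dt}\big|_{t=0}\big|\le \alpha\,\xi\,t_{k-1}$ stays bounded; the $W^{2,p}/\mathrm{BMO}$ work and the comparison with the disk eigenfunction enter to control this first-derivative term, not a post-Hessian remainder. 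Crucially, the sets $\mathcal{A}_{n,i}$ in the theorem are \emph{not} the full balls $B_{a_n}(w_i)$ you take them to be: they are proper subsets cut out by the rate condition $\sum_{j\ge k}t_j^2\le \alpha\,t_{k-1}^2$ and the requirement $P^\infty=P_n$.

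The gap in your proposal is the one you name yourself, and it is not a technicality. An analytic, $n$-explicit lower bound on the nontrivial Fourier blocks $\hat H_n(k)$, $k\notin\{0,\pm1\}$, is precisely what is not known; your sentence ``asymptotic comparison with the disk, where the corresponding circulant spectrum is known'' is a hope, not an argument, and carrying it out quantitatively enough to produce a computable $N$ would prove a strictly stronger statement (minimality on full balls) than the paper claims. The paper's iteration circumvents this entirely: by restricting to the specific symmetrization directions it only ever needs the sign of $\frac{d^2\lambda}{dt^2}$ along those particular one-parameter families, which the explicit formula delivers for free, rather than positivity of the whole quadratic form. So your route, if it could be closed, would give more; as it stands, the ``principal obstacle'' is the proof, and it is left open.
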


\begin{cor}[A global set for which $P_n$ is the minimizer] \label{pa}
There exists $N \ge 5$ and a modulus $\omega(0^+)=0$ such that for all $n \ge N$, if $P_n$ is the convex regular $n$-gon with $|P_n|=\pi$, 
$$
\Omega_{n}=\Big\{P' \in \mathcal{M}(n,\pi): \omega(|P' \Delta (B_1+a)|) \ge \lambda(P_n)-\lambda(B_1) \text{ for an $a \in \mathbb{R}^2$}\Big\} , 
$$

$$
Q_n= \Big\{P' \in \mathcal{M}(n,\pi): \text{vertices}(P') \in \bigotimes_{i=1}^{n}  \mathcal{A}_{n, i} \text{(mod a rigid motion)}\Big\},
$$
the minimizer of the principal frequency among $n$-gons with fixed area in $\mathcal{A}_{n} =\Omega_n \cup Q_n$ is $P_n$. 
\end{cor}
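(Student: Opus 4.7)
The plan is to decompose $\mathcal{A}_n = Q_n \cup \Omega_n$ and argue separately on each piece: Theorem \ref{a} handles the ``local'' piece $Q_n$ directly, while a quantitative Faber--Krahn inequality handles the ``far-from-a-ball'' piece $\Omega_n$.

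For $Q_n$: any $P' \in Q_n$ has area $\pi$ and vertex tuple in $\bigotimes_{i=1}^{n} \mathcal{A}_{n,i}$ after a rigid motion. Since $\lambda$ scales by $\lambda(tP') = t^{-2}\lambda(P')$, Theorem \ref{a} at area $\alpha = \pi$ applies verbatim and yields $\lambda(P') \ge \lambda(P_n)$ for every $P' \in Q_n$, with equality only at $P_n$.

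For $\Omega_n$: I would take $\omega$ to be the universal modulus furnished by the quantitative Faber--Krahn inequality, namely a function with $\omega(0^+)=0$ such that for every open $\Omega \subset \mathbb{R}^2$ with $|\Omega| = \pi$,
\[
\lambda(\Omega) - \lambda(B_1) \ge \omega\Bigl(\inf_{a \in \mathbb{R}^2}|\Omega \Delta (B_1+a)|\Bigr).
\]
A sharp quadratic $\omega$ from Brasco--De Philippis--Velichkov works, but any polynomial modulus is enough. With this $\omega$ fixed in the definition of $\Omega_n$, and reading ``for an $a$'' as the Fraenkel-minimizing translation $a^\ast$, membership $P' \in \Omega_n$ says exactly $\omega(|P' \Delta (B_1 + a^\ast)|) \ge \lambda(P_n) - \lambda(B_1)$, so the displayed inequality gives $\lambda(P') - \lambda(B_1) \ge \lambda(P_n) - \lambda(B_1)$ at once, i.e., $\lambda(P') \ge \lambda(P_n)$.

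Taking the union of the two bounds proves the corollary. The only real obstacle --- leaving aside Theorem \ref{a}, which is assumed --- is a bookkeeping one: fixing a single modulus $\omega$ and a consistent normalization (area $\pi$, Fraenkel-optimal translation $a^\ast$) that work uniformly in $n \ge N$. Since the quantitative Faber--Krahn modulus depends only on the dimension and the area constraint, this step is cosmetic and the argument goes through without further analytic input.
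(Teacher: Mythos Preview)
Your proposal is correct and follows the same two-piece decomposition as the paper: Theorem~\ref{a} handles $Q_n$, and a Faber--Krahn stability modulus handles $\Omega_n$. The only substantive difference lies in how the modulus $\omega$ is produced. You invoke the sharp quantitative Faber--Krahn inequality (Brasco--De Philippis--Velichkov) as a black box, which immediately gives an explicit quadratic $\omega$. The paper instead derives the \emph{existence} of some modulus by a soft compactness argument: assuming a sequence $E_k$ with Fraenkel asymmetry bounded away from zero but $\lambda(E_k)\to\lambda(B_1)$, it shows the $E_k$ are uniformly bounded (via a rectangle comparison), extracts a Hausdorff-convergent subsequence, passes to the limit in $\lambda$ via Chenais' theorem, and contradicts the (qualitative) Faber--Krahn inequality. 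Your route is shorter and yields an explicit modulus, at the cost of citing a substantially harder external theorem; the paper's route is self-contained from first principles but gives a non-constructive $\omega$. Both are valid, and the paper's remark after the statement of Corollary~\ref{pa} in fact acknowledges that the quadratic modulus from \cite{MR3357184} could be used in place of the compactness argument. Your explicit note that ``for an $a$'' must be read as the Fraenkel-minimizing translation is also a point the paper leaves implicit.
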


The method of proof involves tensor calculus and a partial (Steiner) symmetrization. For large $n$, since the area is fixed, if one considers a perturbation and a set of initial vertices $\{v_1,v_2,v_3\}$ in a clockwise-consecutive arrangement, the generated triangle is symmetrized with respect to the line intersecting the mid-point of the line-segment between $v_1$ and $v_3$ in a perpendicular fashion. Thus $|v_2^*-v_1|=|v_3-v_2^*|$, where $v_2 \mapsto v_2^*$. In order to investigate how the eigenvalue is affected, a flow $t \mapsto P_t$ is generated via the symmetrization. Therefore, the calculus which encodes the theory of (singular) moving surfaces is utilized. The process is iterated which yields a series depending on the first and second derivatives of the eigenvalues.

\begin{rem}
Observe that assuming $P \in \mathcal{M}(n,\alpha)$ is also convex, $\{P^k\}$ generated in the proof always converges to an equilateral $P^\infty \in \mathcal{M}(n,\alpha)$ and for $n \ge 3$. One may generate many explicit examples via $M_n(T(n)) \subset \mathcal{A}_{n}$ in the main proof. Let $a \in \mathbb{N}$ \& choose $n\ge N$ sufficiently large such that $T(n) \ge a$; hence if the convergence is in $a$ iterations or fewer, then $\lambda_n(P) \ge \lambda_n(P^\infty)$. In particular, the minimization encodes the equilateral polygons, and if $P^\infty=P_n$, the minimality is true. Therefore letting for instance $a=2022^{2022^{2022}}$ implies lots of examples. The general minimization improvement to equilateral polygons in \eqref{ea} is with fewer assumptions; more precisely, a bound via the rate explicit in $Q_n$ and the localization, thus, combined with the global enhancement in Corollary \ref{pa} the minimization improvement generates a large collection of polygons. Interestingly, one may prove the rate for triangles \S \ref{pq3} and the localization is simple to generalize thanks to comparison arguments with rectangles, hence the space constructed when $n$ is large is natural and the optimal assumptions for the rate may have formulations in terms of angle restrictions. 

Nevertheless, the unique minimizer of the polygonal isoperimetric inequality in the class of convex $n$-gons is the regular, and in general, the convex regular. Since an example of a regular polygon in a more general class is the pentagram, the convexity is necessary. Thus when the polygon is convex a necessary and sufficient condition is cyclicity and equilaterality. Observe that the sequence $\{P^k\}$ converges to an equilateral polygon $P^\infty \in \mathcal{M}(n,\alpha)$ when $P^1 \in \mathcal{M}(n,\alpha)$: therefore the missing characteristic is cyclicity; but, if $n$ is large, $P_n$ is close to a disk, thus the cyclicity shows up naturally \& Corollary \ref{pa} localizes the problem for $n$ large to a neighborhood of $P_n$. Now since in Theorem \ref{a}, the formula I proved (see the proof) is true without the rate in $Q_n$, the remaining parts to completely prove that the global minimizer is $P_n$ are: (i) to estimate the radius of the neighborhood and reduce the complete problem to a neighborhood where one always has convexity via the non-degenerate convex $P_n$; one way of investigating this is to explicitly identify the modulus $\omega$ which is (modulo a non-explicit constant) quadratic \cite{MR3357184}; nevertheless, for some subsets, the modulus could be much better than quadratic; (ii) when localizing, to prove that when $n$ is large, many polygons in the neighborhood converge via the partial symmetrization to the regular (and therefore convex) $n$-gon and utilize the formula to then compare the eigenvalues; the formula is encoded more generally in Theorem \ref{E}. The second derivative via the localization is for non-trivial iterations always positive, therefore one has to investigate the first derivative carefully and exploit the symmetry to obtain a fast decay (I used the rate, symmetry, $W^{2,p}/BMO$ theory, and the polygonal isoperimetric stability to show this decay).
The manifold $\mathcal{M}(n,\alpha)$ has non-convex polygons and the equilateral polygons near $P_n$ generate an $(n-3)-$dimensional submanifold although there are non-convex equilateral $n$-gons which when taking the radius of the neighborhood around $P_n$ small, vanish via the non-degenerate convexity of $P_n$. This then hints towards a complete solution for $n$ large in case the general minimization is as suggested by P\'olya in his paper \cite{MR26817}. Last, to remove the largeness assumption on $n$, the constants appearing in the proof of Theorem \ref{a} have a fundamental role via explicit estimates. In spite of the preclusion of cyclicity for small values of $n$, when $n=4$ for example, the limit is a rhombus which after one Steiner symmetrization is changed into a rectangle and then the eigenvalue is explicit and may be compared to the eigenvalue of a square. Thus for low values of $n$, one has to understand a new way of evolving $P^\infty$ into a more cyclical polygon, see \S \ref{ra7}, \S \ref{z1k}.
\end{rem}

\pagebreak

\begin{thm} \label{E}
Assume $n \ge 3$ and $P \in \mathcal{M}(n,\alpha)$ is convex. Then there exists an equilateral $P_{\text{eq}} \in  \mathcal{M}(n,\alpha)$ such that

$$
 \lambda(P)=\lambda(P_{\text{eq}})+\sum_{k=2}^\infty \alpha_kt_{k-1}+\sum_{k=2}^\infty  \beta_k\frac{t_{k-1}^2}{2},
$$
where $P^1=P$, $P^{k}=(P^{k-1})^{*}$ represents the n-gon constructed from $P^{k-1}$ as in the proof of Theorem \ref{a}, 

$$
\alpha_k=\frac{d \lambda(P^{k})}{dt}\Big|_{t=0}=\int_0^{\xi_k} \frac{\alpha}{\xi_k}|\nabla u_{n,k}(\alpha, y_{-})|^2 d\alpha-\int_0^{\xi_k} \frac{\alpha}{\xi_k} |\nabla u_{n,k}(\alpha, y_+)|^2 d\alpha,
$$

\begin{align*}
\beta_k=\frac{d^2 \lambda(P^{k})}{d t^2}\Big|_{t=t_{e_{k-1}}}&=\frac{2(\frac{b_k}{2}-t_{e_{k-1}})}{\xi_k(\xi_k^2+(t_{e_{k-1}}-\frac{b_k}{2})^2)}\int_0^{\xi_k} \alpha |\nabla u_{n,t_{e_{k-1}}}(\alpha, y_+)|^2 d \alpha\\
&+\frac{2(\frac{b_k}{2}+t_{e_{k-1}})}{\xi_k(\xi_k^2+(t_{e_{k-1}}+\frac{b_k}{2})^2)}\int_0^{\xi_k} \alpha |\nabla u_{n,t_{e_{k-1}}}(\alpha, y_{-})|^2 d \alpha,
\end{align*}
$y_{\pm}, \xi_k, b_k, t_{k-1}$ are calculated explicitly from $P^{k-1}$, $t_{e_{k-1}} \in [0, t_{k-1}]$, and $u_{n,k}$ $\&$ $u_{n,t_{e_{k-1}}}$ denote the corresponding eigenfunctions. Furthermore, the set of equilateral polygons with area $\alpha$, $\mathcal{E}(n, \alpha)$, is an $(n-3)$--dimensional submanifold of the $(2n-4)$--dimensional manifold $\mathcal{M}(n,\alpha)$ near $P_n$. 
\end{thm}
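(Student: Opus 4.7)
The plan is to iterate the partial Steiner symmetrization from the proof of Theorem \ref{a} and Taylor-expand the eigenvalue one step at a time. Set $P^1 = P$ and $P^k = (P^{k-1})^{*}$; from each iteration I would extract the canonical one-parameter family $t \mapsto P^{k-1}_t$ in which the middle vertex of the current triple slides along the perpendicular bisector of the segment joining its neighbors, with $P^{k-1}_0 = P^{k-1}$ and $P^{k-1}_{t_{k-1}} = P^k$. The geometric quantities $\xi_k$, $b_k$, $y_\pm$ and the corresponding eigenfunction $u_{n,k}$ are read directly from this construction.

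For fixed $k$ I would apply Taylor's theorem with Lagrange remainder along the flow:
$$
\lambda(P^k) - \lambda(P^{k-1}) = \alpha_k\, t_{k-1} + \frac{1}{2}\beta_k\, t_{k-1}^2,
$$
where $\alpha_k = \frac{d}{dt}\lambda(P^{k-1}_t)\big|_{t=0}$ and $\beta_k = \frac{d^2}{dt^2}\lambda(P^{k-1}_t)\big|_{t=t_{e_{k-1}}}$ for some intermediate $t_{e_{k-1}} \in [0, t_{k-1}]$. To evaluate $\alpha_k$ I would use the Hadamard shape-derivative formula: the first variation is a boundary integral of $|\nabla u_{n,k}|^2$ weighted by the normal speed of the two moving edges, and since that speed is linear in the edge parameter with amplitude $\alpha/\xi_k$ on $[0,\xi_k]$, the displayed expression for $\alpha_k$ falls out. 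An analogous second-order computation using the moving-surface calculus that the paper advertises, applied to the same flow and making explicit use of the geometric parameters $b_k, \xi_k, y_\pm, t_{e_{k-1}}$, produces the displayed formula for $\beta_k$.

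Telescoping over $k \ge 2$ gives $\lambda(P) - \lambda(P_{\mathrm{eq}}) = \sum_k[\lambda(P^{k-1}) - \lambda(P^k)]$; the sequence $\{P^k\}$ converges to an equilateral $P_{\mathrm{eq}} \in \mathcal{M}(n,\alpha)$ as remarked, because the partial symmetrization monotonically contracts the edge-length deviations and convexity of $P$ keeps every $P^k$ in $\mathcal{M}(n, \alpha)$. Monotonicity of $\lambda(P^k)$ together with the lower bound $\lambda(P^k)\ge \lambda(B)$ (with $B$ a disk of area $\alpha$) forces absolute convergence of both series. For the submanifold assertion I would apply the implicit function theorem to the equilaterality map $F: \mathcal{M}(n,\alpha) \to \mathbb{R}^{n-1}$, $F(P) = (|e_2|-|e_1|, \ldots, |e_n|-|e_1|)$; at $P_n$ the Jacobian $dF|_{P_n}$ is a circulant operator in the vertex perturbations, and its spectrum (computable via the circulant matrix theory already used in the paper) is nonzero on all modes transverse to the area and rigid-motion kernels, giving $dF|_{P_n}$ rank $n-1$. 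Hence $\mathcal{E}(n,\alpha)=F^{-1}(0)$ is locally a submanifold of codimension $n-1$, i.e., of dimension $(2n-4)-(n-1)=n-3$.

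The principal obstacle is justifying the iterated Hadamard calculus on Lipschitz polygonal domains: differentiation under the boundary integral and passage to the limit $k\to\infty$ require sharp control on $u_{n,k}$ near the corners, which is exactly where the $W^{2,p}/BMO$ estimates flagged in the abstract would be invoked; ensuring the intermediate point $t_{e_{k-1}}$ in the Lagrange remainder can be uniformly controlled to legitimize the convergence of $\sum \beta_k t_{k-1}^2$ is the most delicate part.
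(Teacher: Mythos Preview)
Your overall architecture matches the paper's: iterate the partial symmetrization, apply Taylor's theorem with Lagrange remainder at each step, read off the first and second derivatives from the explicit Hadamard/moving-surface computations in the proof of Theorem~\ref{a}, then telescope. There are, however, two concrete discrepancies worth fixing.

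First, your flow is oriented oppositely to the paper's. In the proof of Theorem~\ref{a} the one-parameter family satisfies $P_0 = P^{*}$ (the symmetrized polygon) and $P_{t^{*}} = P$, so the Taylor expansion is taken about the \emph{symmetric} configuration and the derivative $\alpha_k$ in the theorem statement is evaluated at $P^{k}$, with eigenfunction $u_{n,k}$ belonging to $P^{k}$. Your convention $P^{k-1}_0 = P^{k-1}$, $P^{k-1}_{t_{k-1}} = P^{k}$ evaluates $\alpha_k$ at the non-symmetric $P^{k-1}$ and introduces a global sign error in the telescoped formula. This is easy to repair but does affect the explicit integrals as stated.

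Second, and more substantively, the partial symmetrization moves only one vertex while keeping the rest of the polygon fixed; it is \emph{not} a Steiner symmetrization of $P$ and there is no reason for $\lambda(P^{k})$ to be monotone. Your argument for absolute convergence of the two series therefore fails as written. What \emph{is} monotone is the perimeter: the paper's computation \eqref{ai@} gives $L(P^{k}) - L(P^{k+1}) \approx c\,\xi^{1/2} t_k^{2}$, so $\sum t_k^{2} < \infty$; together with the positivity and uniform boundedness of $\beta_k$ this yields convergence of $\sum \beta_k t_{k-1}^{2}$, and then $\sum \alpha_k t_{k-1}$ converges by subtraction from the telescoped total.

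For the submanifold assertion your approach differs from the paper's. The paper introduces an auxiliary side-length variable $s$, works in $\mathbb{R}^{2n+1}$, and writes down the full $(n{+}4)\times(2n{+}1)$ Jacobian of the augmented constraint map $\Psi$ at the regular polygon, then eliminates variables row by row to exhibit an $(n-3)$-dimensional kernel. Your route (cut out $\mathcal{E}(n,\alpha)$ inside $\mathcal{M}(n,\alpha)$ by the $(n-1)$ edge-difference constraints and invoke circulant spectral theory for the rank of $dF|_{P_n}$) is cleaner in principle, but you have not actually verified the rank: you would need to check that the circulant block acting on vertex perturbations has no additional kernel directions tangent to $\mathcal{M}(n,\alpha)$ beyond those already absorbed by the area and centroid constraints. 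The paper's explicit elimination, while less elegant, sidesteps this verification.
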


In $1951,$ P\'olya \& Szeg\"o \cite[vii]{pPZp} discussed the problem of finding an explicit formula for the eigenvalue of a triangle. The theorem addresses this: e.g. via Corollary \ref{Y}, Remark \ref{E_p}, Corollary \ref{E_3}, Corollary \ref{E_4}, \& Remark \ref{E_5}.
\begin{cor} \label{Y}
The principal frequency of a triangle $T$ with given area $A$ is

$$
\Lambda(T)=\sqrt{\frac{4 \pi^2}{A\sqrt{3}}+ \sum_{k=2}^\infty  \alpha_k \frac{t_{k-1}^2}{2}},
$$
where $T^1=T$, $T^{k}=(T^{k-1})^{*}$ represents the triangle constructed from $T^{k-1}$ as in the proof of Theorem \ref{a} (when $n=3$),

\begin{align*}
\alpha_k=\frac{d^2 \lambda(T^{k})}{d t^2}\Big|_{t=t_{e_{k-1}}}&=\frac{2(\frac{b_k}{2}-t_{e_{k-1}} )}{\xi_k(\xi_k^2+(t_{e_{k-1}} -\frac{b_k}{2})^2)}\int_0^{\xi_k} \alpha |\nabla u_{3, t_{e_{k-1}} }(\alpha, y_+)|^2 d \alpha\\
&+\frac{2(\frac{b_k}{2}+t_{e_{k-1}} )}{\xi_k(\xi_k^2+(t_{e_{k-1}} +\frac{b_k}{2})^2)}\int_0^{\xi_k} \alpha |\nabla u_{3, t_{e_{k-1}} }(\alpha, y_{-})|^2 d \alpha,
\end{align*}
$y_{\pm}, \xi_k, b_k, t_{k-1}$ are calculated explicitly from $T^k$,  $t_{e_{k-1}} \in [0, t_{k-1}]$, and $u_{3, t_{e_{k-1}}}$ is the corresponding eigenfunction.

\end{cor}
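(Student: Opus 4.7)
The plan is to specialize Theorem \ref{E} to $n = 3$, combining it with one additional observation unique to triangles that forces the linear-order term to vanish. First, the equilateral polygon limit $T^\infty$ is the equilateral triangle with area $A$, whose first Dirichlet eigenvalue follows from the classical closed form $\lambda_1 = 16\pi^2/(3s^2)$ (with $s$ the side length) combined with $A = \sqrt{3}s^2/4$, yielding $\lambda(T^\infty) = 4\pi^2/(A\sqrt{3})$. Since $\Lambda = \sqrt{\lambda}$ in the convention of the paper (the Bessel-root identity on the unit disk confirms this), the claimed statement reduces to proving
$$
\lambda(T) = \frac{4\pi^2}{A\sqrt{3}} + \sum_{k=2}^\infty \alpha_k \frac{t_{k-1}^2}{2}.
$$

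The central point is that, for $n = 3$, the Steiner symmetrization step $T^{k-1} \mapsto T^k$ is a one-parameter flow in which a single vertex $v_2$ slides along a line parallel to the opposite side $v_1v_3$, traveling from its initial position at $t = 0$ to the perpendicular bisector at $t = t_{k-1}$. Reflecting the plane across that bisector interchanges the configurations at parameters $t_{k-1}+s$ and $t_{k-1}-s$ as congruent triangles, hence $\lambda(t_{k-1}+s) = \lambda(t_{k-1}-s)$. This evenness about $t_{k-1}$ forces $\lambda'(t_{k-1}) = 0$, so Taylor's theorem with Lagrange remainder centered at $t_{k-1}$ produces
$$
\lambda(T^{k-1}) - \lambda(T^k) = \frac{t_{k-1}^2}{2}\,\frac{d^2 \lambda(T^k)}{dt^2}\Big|_{t = t_{e_{k-1}}}
$$
for some $t_{e_{k-1}} \in (0, t_{k-1})$; this intermediate-point quantity is precisely the $\alpha_k$ of the corollary, and it explains why, in contrast to Theorem \ref{E}, only a quadratic remainder appears in the triangular case. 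The explicit boundary-integral representation of $\alpha_k$ then transfers verbatim from the $\beta_k$-formula in Theorem \ref{E} at $n = 3$, which itself arises from the Hadamard second variation of $\lambda$ along an area-preserving Steiner flow (both edges adjacent to $v_2$ contribute, giving the two terms at $y_{\pm}$).

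Telescoping over $k = 2, \ldots, K$ and letting $K \to \infty$, using the convergence $T^k \to T^\infty$ supplied by Theorem \ref{E} together with continuity of $\lambda$ under Hausdorff convergence of the triangles (the $W^{2,p}/BMO$ regularity of eigenfunctions on triangles approaching the non-degenerate equilateral limit justifies the interchange of limits), yields
$$
\lambda(T) = \lambda(T^\infty) + \sum_{k=2}^\infty \alpha_k \frac{t_{k-1}^2}{2},
$$
and taking square roots produces the stated formula. The hard part I expect is the convergence of the series: one must certify that $t_{k-1} \to 0$ quickly enough (geometrically is more than sufficient) for $\sum \alpha_k t_{k-1}^2$ to converge absolutely, and that $\alpha_k$ stays uniformly bounded as $T^k$ nears the equilateral. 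Both ingredients are natural outputs of the contraction rate for alternating Steiner symmetrizations on triangles, the polygonal Faber--Krahn stability, and the $W^{2,p}/BMO$ eigenfunction estimates already deployed in the proofs of Theorems \ref{a} and \ref{E}.
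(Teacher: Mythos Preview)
Your proposal is correct and follows essentially the same route as the paper: specialize Theorem \ref{E} to $n=3$, observe that the symmetry of the isosceles triangle $T^k=(T^{k-1})^*$ forces the first-derivative term to vanish, identify $\lambda(P_{\text{eq}})=4\pi^2/(A\sqrt{3})$, and take the square root. The only cosmetic difference is that you parametrize the flow with $t=0$ at $T^{k-1}$ and $t=t_{k-1}$ at the isosceles $T^k$, whereas the paper places $t=0$ at the isosceles configuration; your extra discussion of series convergence is unnecessary, since Theorem \ref{E} already delivers the convergent series.
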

\begin{rem} \label{E_p}
The eigenfunctions around the vertex of a polygon with opening $\pi / \alpha$ have the form
$$
\alpha_ar^{\alpha}\sin(\alpha \phi)+O(r^{2\alpha})+O(r^{2+\alpha}),
$$ 
$\phi \in [0,\frac{\pi}{\alpha}]$ in the polar coordinates relative to the cone. Note that $\frac{d^2 \lambda}{d t^2}$ also contains $u_{3, t_{e_{i-1}}}$ but it is evaluated on the edges of triangles obtained via the construction mentioned in the proof of Theorem \ref{a}. Therefore, one has additional information via the angles of the triangles which generate more explicit formulas for $\frac{d^2 \lambda}{d t^2}$.
\end{rem}

\begin{cor} \label{E_3}
If $T$ is a  triangle with given area $A$, there exists an isosceles triangle $T_{iso}$ such that

$$
\Lambda(T)=\sqrt{\lambda(T_{iso})+ \alpha_1 t^2},
$$
where $T_{iso}=T^{*}$ represents the triangle constructed from $T$ as in the proof of Theorem \ref{a} (when $n=3$), 
\begin{align*}
\alpha_1&=\frac{2(\frac{b_k}{2}-t_{e_{k-1}})}{\xi_k(\xi_k^2+(t_{e_{k-1}}-\frac{b_k}{2})^2)}\int_0^{\xi_k} \alpha |\nabla u_{3,t_{e_{k-1}}}(\alpha, y_+)|^2 d \alpha\\
&+\frac{2(\frac{b_k}{2}+t_{e_{k-1}})}{\xi_k(\xi_k^2+(t_{e_{k-1}}+\frac{b_k}{2})^2)}\int_0^{\xi_k} \alpha |\nabla u_{3,t_{e_{k-1}}}(\alpha, y_{-})|^2 d \alpha,
\end{align*}
$y_{\pm}, \xi_k, b_k, t$ are calculated explicitly from $T$, $t_{e_{k-1}} \in [0, t]$, and $u_{3,t_{e_{k-1}}}$ is the corresponding eigenfunction.
\end{cor}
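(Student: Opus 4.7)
\emph{Plan.} The strategy is to specialize the one-step expansion underlying Theorem \ref{E} to $n=3$. For a triangle $T\in\mathcal{M}(3,A)$ with clockwise-consecutive vertices $v_1, v_2, v_3$, the partial Steiner symmetrization of Theorem \ref{a} moves $v_2$ along a continuous flow $t\mapsto T_t$ with $T_0 = T$ and $T_{t_1}=T^{*} = T_{iso}$, where $|v_2^{*} - v_1|=|v_3 - v_2^{*}|$ makes $T_{iso}$ isosceles with apex $v_2^{*}$. The plan is to apply a second-order Taylor expansion with Lagrange remainder to $f(t):=\lambda(T_t)$ about the symmetric endpoint $t=t_1$, and to show that the linear term $f'(t_1)$ vanishes by the reflection symmetry of the ground state on $T_{iso}$.

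\emph{Main steps.} The regularity of $t\mapsto\lambda(T_t)$ on $[0,t_1]$ up to order two is already contained in Theorem \ref{E} via the moving-surfaces calculus and the vertex expansions recalled in Remark \ref{E_p}, so Taylor's theorem yields
\begin{equation*}
\lambda(T) = \lambda(T_{iso}) - f'(t_1)\,t + f''(t_{e})\,\frac{t^2}{2},\qquad t:=t_1,\quad t_e\in[0,t_1].
\end{equation*}
The first-derivative formula from Theorem \ref{E} gives
\begin{equation*}
f'(t_1) = \int_0^{\xi_2}\frac{\alpha}{\xi_2}\bigl(|\nabla u_{3,2}(\alpha, y_-)|^2 - |\nabla u_{3,2}(\alpha, y_+)|^2\bigr)\,d\alpha,
\end{equation*}
evaluated at $P^2 = T_{iso}$. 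Since $T_{iso}$ has a line of reflection symmetry through $v_2^{*}$ and the midpoint of the segment $v_1 v_3$, the first Dirichlet eigenfunction $u_{3,2}$ is, up to sign, invariant under this reflection; by the construction of Theorem \ref{a} the points $y_+$ and $y_-$ are the two boundary points of the slice at height $\alpha$ perpendicular to the axis, hence mirror images across it. Consequently $|\nabla u_{3,2}(\cdot, y_+)|\equiv|\nabla u_{3,2}(\cdot, y_-)|$, the integrand cancels pointwise, and $f'(t_1)=0$. The surviving quadratic term matches the $\alpha_1$ coefficient as defined in the statement, and $\Lambda(T)=\sqrt{\lambda(T)}$ completes the formula.

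\emph{Main obstacle.} The delicate step is verifying that $y_+$ and $y_-$ are genuine reflection pairs across the symmetry axis of $T_{iso}$, which amounts to unpacking the slice parameterization from the construction of Theorem \ref{a}. Once this geometric identification is in hand, the uniqueness (up to sign) of the ground state on a symmetric convex planar domain automatically forces $|\nabla u_{3,2}|^2$ to be reflection-invariant, and the cancellation in $f'(t_1)$ follows immediately. The remaining ingredients — the one-step reduction to the isosceles triangle, the $C^2$-regularity of the eigenvalue flow, and the passage from eigenvalue to principal frequency — are immediate.
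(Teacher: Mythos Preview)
Your proposal is correct and follows essentially the same route as the paper: apply the one-step second-order Taylor expansion of $\lambda(T_t)$ at the isosceles endpoint and kill the linear term by the reflection symmetry of the first Dirichlet eigenfunction on $T_{iso}$. The paper's own argument is the single line ``the symmetrization implies that $T^*$ is an isosceles triangle and therefore the symmetry yields $\frac{d\lambda(T^*)}{dt}\big|_{t=0}=0$''; your only cosmetic deviation is that you parameterize the flow from $T$ to $T_{iso}$ and expand about $t=t_1$, whereas the paper's convention sets $P_0=T^*$ and expands about $t=0$, which is the same computation up to a sign that the symmetry cancellation renders irrelevant.
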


\begin{cor} \label{E_4}
If $T=ABC$ is a  triangle with given area $\rho$ and sides $c \le a \le b$, there exists an isosceles triangle $T_{iso}$ with area $\rho$ such that

$$
\Lambda(T)=\sqrt{\lambda(T_{iso})+ \alpha_1 t^2+o(t^2)},
$$
where $T_{iso}=T^{*}$ represents the triangle constructed from $T$ as in the proof of Theorem \ref{a} (when $n=3$), $t=\frac{b}{2}-\frac{\overrightarrow{AB} \cdot \overrightarrow{AC}}{b}$,
$$
\alpha_1= \frac{b^2}{\rho ((\frac{b}{2})^2+(\frac{2\rho}{b})^2)} \int_0^{\frac{2\rho}{b}} \alpha |\nabla u(\alpha, y_+)|^2 d \alpha,
$$
$y_{+}$ is calculated explicitly from $T$, and $u$ is the eigenfunction of $T_{iso}$.

\end{cor}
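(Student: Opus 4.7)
The plan is to derive Corollary \ref{E_4} from the exact identity in Corollary \ref{E_3} by freezing the mean-value parameter $t_{e_{k-1}}$ at $0$ and absorbing the change into an $o(t^2)$ remainder. I would first pin down the geometric data of the one-step Steiner symmetrization $T\mapsto T_{iso}$ used in the $n=3$ case of Theorem \ref{a}: placing $A$ at the origin with $B=(b,0)$, the foot of the perpendicular from $C$ onto line $AB$ has abscissa $\overrightarrow{AB}\cdot\overrightarrow{AC}/b$, so the signed horizontal displacement of $C$ from the midpoint of $AB$ is exactly $t=\frac{b}{2}-\frac{\overrightarrow{AB}\cdot\overrightarrow{AC}}{b}$. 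Because the symmetrization flow moves $C$ only horizontally, the base length equals $b_k=b$ and the apex height equals $\xi_k=2\rho/b$ along the entire flow.

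Substituting $b_k=b$ and $\xi_k=2\rho/b$ into the $\alpha_1$ formula of Corollary \ref{E_3} and setting $t_{e_{k-1}}=0$ collapses both prefactors to the common value $\frac{b}{\xi_k(\xi_k^2+(b/2)^2)}$. At the symmetric configuration the eigenfunction $u=u_{3,0}$ of $T_{iso}$ is invariant under reflection across the perpendicular bisector of $AB$, so $|\nabla u(\alpha,y_+)|^2=|\nabla u(\alpha,y_-)|^2$ on the two congruent edges, and the two boundary integrals coincide. Using the identity $2b/\xi_k=b^2/\rho$ one then obtains exactly the constant $\alpha_1$ displayed in the statement.

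To upgrade the exact expression in Corollary \ref{E_3} to the $o(t^2)$ asymptotic formula, I would write $\alpha_1(t_{e_{k-1}})=\alpha_1(0)+[\alpha_1(t_{e_{k-1}})-\alpha_1(0)]$ and, using $t_{e_{k-1}}\in[0,t]$, reduce everything to the continuity of $s\mapsto\alpha_1(s)$ at $s=0$; multiplying by $t^2$ then gives the $o(t^2)$ correction and taking square roots finishes the argument. The main obstacle is precisely this continuity. The prefactor depends smoothly on $s$ through the elementary functions $b_k(s)$, $\xi_k(s)$, so the real work is the continuity of the boundary integral $s\mapsto\int_0^{\xi_k(s)}\alpha|\nabla u_{3,s}(\alpha,y_\pm(s))|^2\,d\alpha$ at $s=0$. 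To handle it I would map each $T_s$ to $T_{iso}$ via a smooth family of diffeomorphisms, transplant the Dirichlet eigenvalue problem accordingly, and invoke the $W^{2,p}/BMO$ regularity theory announced in the introduction to obtain uniform-in-$s$ $W^{2,p}$ estimates for the transplanted eigenfunctions away from the corners; continuous dependence of the boundary traces of $|\nabla u_{3,s}|^2$ at $s=0$ then follows from the trace theorem together with the smooth dependence of the pulled-back coefficients on $s$.
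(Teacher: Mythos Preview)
Your proposal is correct, but it takes a longer road than the paper. The paper does not pass through Corollary~\ref{E_3} at all: it simply quotes the second-order Taylor expansion with Peano remainder already displayed in the proof of Theorem~\ref{a},
\[
\lambda(T)=\lambda(T^*)+\frac{d\lambda}{dt}\Big|_{t=0}t+\frac{d^2\lambda}{dt^2}\Big|_{t=0}\frac{t^2}{2}+o(t^2),
\]
observes that $\frac{d\lambda}{dt}\big|_{t=0}=0$ by the reflection symmetry of the isosceles $T^*$, and then evaluates $\frac{d^2\lambda}{dt^2}\big|_{t=0}$ using that same symmetry to merge the $y_+$ and $y_-$ integrals. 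The substitution $2\rho=b\xi$ finishes the computation in one line. Your route---start from the exact Lagrange-form identity of Corollary~\ref{E_3}, then prove continuity of $s\mapsto \lambda''(s)$ at $s=0$ via transplantation and $W^{2,p}$ estimates to convert the Lagrange remainder into a Peano $o(t^2)$---is logically sound but amounts to re-proving the Peano form of Taylor's theorem in this special case; the regularity you invoke is already implicit in the paper's use of the expansion. What your approach buys is an explicit account of \emph{why} the $o(t^2)$ is legitimate, which the paper leaves tacit.

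One small slip in your geometric setup: with the convention $c\le a\le b$ and $b=|AC|$, the base of the symmetrization is the side $AC$ (not $AB$), and the moving apex is $B$. Placing $A$ at the origin and $C=(b,0)$, the foot of the perpendicular from $B$ to line $AC$ has abscissa $\overrightarrow{AB}\cdot\overrightarrow{AC}/b$, which gives exactly the stated $t$. Your computation of $\alpha_1$ is unaffected by this relabeling.
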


\begin{rem} \label{E_5}
One may identify $\lambda(T_{iso})$ in large classes of triangles via formulas in e.g. \cite{MR2264470op}: assume $T_{iso}$ has sides $a < b$ with $\alpha$ the angle corresponding to $a$; the eigenvalue then can be computed via

$$
\lambda(T_{iso})=\frac{\pi^2}{\alpha^2 b^2}-\frac{2^{2/3} a_1 \pi^{4/3}}{\alpha^{4/3} b^2} + 4\frac{2^{1/3} a_1^2 \pi^{2/3}}{5 \alpha^{2/3} b^2}+\Big(\frac{37}{70}-\frac{52}{175}a_1^3+\frac{1}{6} \pi^2 \Big) \frac{1}{b^2} + O(\alpha^{2/3}),
$$
where $a_1$ is associated with a zero of the Airy function of the first kind and $\alpha$ is small. Moreover, as mentioned in Remark \ref{E_p} the constant $\alpha_1$ can be calculated in a more explicit way via the angles of the triangles and the eigenfunction representation around the vertices. Note that a quick bound is:

$$
\alpha_1 \le ||\nabla u(\alpha, y_+(\alpha))||_{L^\infty([0, \frac{2\rho}{b}])}^2 \frac{2\rho}{(\frac{b}{2})^2+(\frac{2\rho}{b})^2}.
$$

\end{rem}

Inter-alia to constructing the space of polygons in Theorem \ref{a}, the equivalence of the scaling-invariant principal frequency deficit and the scaling-invariant polygonal isoperimetric deficit naturally appears. The equivalence completely addresses a 2006 conjecture of Antunes and Freitas \cite[p. 338]{MR2264470}: Corollary \ref{Yzk6} and Remark \ref{Yzk8}.

\begin{thm} \label{Yzk}
There exist computable $a_k>0$ where $k \in \{0,1\}$ such that if $P$ is a triangle, $P_3$ is an equilateral triangle,  

$$
\delta_{\lambda}(P):= |P|\lambda(P)-|P_3|\lambda(P_3),
$$ 

$$
\delta_{\mathcal{P}}(P):=\frac{L^2(P)}{12 \sqrt{3}|P|}-1,
$$

then

$$
a_0 |P_3|^{3/2} \delta_{\mathcal{P}}(P) \le \delta_{\lambda}(P) \le a_1|P_3|^{3/2} \delta_{\mathcal{P}}(P) .
$$

\end{thm}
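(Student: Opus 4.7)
The plan is to build on the representation formula of Corollary \ref{Y}, which is the $n=3$ specialization of Theorem \ref{E}. Since both $|P|\lambda(P)$ and $L^2/|P|$ are scale invariant, I first normalize so that $|P|=|P_3|$; Corollary \ref{Y} then yields
\[
\delta_\lambda(P)=|P|\bigl(\lambda(P)-\lambda(P_3)\bigr)=|P|\sum_{k=2}^\infty \alpha_k\frac{t_{k-1}^2}{2},
\]
where $T^1=P,T^2,\dots,T^\infty$ is the iteration from the proof of Theorem \ref{a}, $T^\infty$ is equilateral and congruent to $P_3$, $\alpha_k>0$, and $t_{k-1}\ge 0$ measures the vertex displacement at step $k$. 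The argument then reduces to (i) uniform two-sided bounds $0<c\le\alpha_k\le C$ (with $c,C$ depending only on $|P|$), and (ii) the telescoping-type equivalence $\sum_k t_{k-1}^2\asymp |P|\,\delta_{\mathcal P}(P)$.

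For (i), each $\alpha_k$ is an explicit boundary integral of $|\nabla u_{3,t_{e_{k-1}}}|^2$ along an edge of $T^k$, multiplied by a rational factor in $\xi_k$ and the offset $b_k/2\pm t_{e_{k-1}}$. The upper bound follows from the $W^{2,p}/BMO$ theory for eigenfunctions on convex polygonal domains (the same machinery used for Theorem \ref{a}), which provides uniform boundary gradient estimates once the iterates are shown to stay in a class with bounded aspect ratio. The lower bound comes from a Hopf-type non-degeneracy argument on the limiting equilateral configuration, combined with the observation that a single partial Steiner symmetrization already forces the minimum angle of $T^k$ to be bounded below by a fixed multiple of $\pi/3$, keeping $\xi_k,b_k$ in a compact range relative to $\sqrt{|P|}$ and preventing collapse of the rational prefactor.

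For (ii), a direct Taylor expansion of the two slant edges shows that one step of Steiner symmetrization decreases the perimeter by $L(T^{k-1})-L(T^k)\asymp t_{k-1}^2/\sqrt{|P|}$ to leading order. Telescoping and using $L(T^\infty)=\sqrt{12\sqrt 3|P|}$ gives $L(P)-L(T^\infty)\asymp |P|^{-1/2}\sum_k t_{k-1}^2$, and since $L(P)+L(T^\infty)\asymp\sqrt{|P|}$ under the area normalization, one obtains $L(P)^2-L(T^\infty)^2\asymp\sum_k t_{k-1}^2$; this last quantity equals $12\sqrt{3}|P|\,\delta_{\mathcal P}(P)$ by definition. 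Inserting (i) and (ii) into the representation formula yields the announced two-sided bound with constants $a_0,a_1$ traceable through each estimate.

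The hardest step is the uniform lower bound $\alpha_k\ge c>0$: triangles with one very small angle produce eigenfunctions that concentrate away from the short edges, potentially washing out the boundary gradient integrals appearing in $\alpha_k$. I would address this by combining the Freitas-type asymptotic from Remark \ref{E_5} (which controls eigenvalue and eigenfunction behaviour in the collapsing-angle regime) with the geometric observation that one partial symmetrization regularizes the triangle sufficiently to escape this regime for all $k\ge 2$. A secondary technical nuisance is ensuring that the error terms in the perimeter Taylor expansion in (ii) are controlled uniformly along the iteration, which again reduces to the stay-close-to-equilateral property of the iterates and can be obtained from monotonicity of the perimeter plus the polygonal isoperimetric stability.
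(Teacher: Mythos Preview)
Your overall strategy---normalize area, apply Corollary \ref{Y}, and convert $\sum_k t_{k-1}^2$ into the perimeter deficit via the telescope $L(T^{k-1})-L(T^k)\asymp t_{k-1}^2/\sqrt{|P|}$---is exactly what the paper does in the regime where $P$ is already close to equilateral. The gap is your claim in (i) that ``a single partial Steiner symmetrization already forces the minimum angle of $T^k$ to be bounded below by a fixed multiple of $\pi/3$.'' This is false: the partial symmetrization fixes two vertices and moves the third onto the perpendicular bisector of the segment joining them, preserving both the area and that segment's length. If the starting triangle has area $1$ and a side of length $M\gg 1$, the first symmetrized isosceles triangle has base $M$ and height $2/M$, hence minimum angle of order $M^{-2}$; the next iterates have bases of order $M/2,\,M/4,\dots$ and remain degenerate for roughly $\log M$ steps. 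On those iterates neither the upper nor the lower bound on $\alpha_k$ is uniform (the eigenvalue, the boundary gradients, and the rational prefactor in $\alpha_k$ all blow up or collapse), and your perimeter expansion in (ii) also loses its constants because $\xi_k,b_k$ are no longer comparable to $\sqrt{|P|}$. The Freitas asymptotic from Remark \ref{E_5} describes $\lambda$ in the collapsing regime but does not salvage a uniform lower bound for the boundary integral defining $\alpha_k$.

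The paper does not attempt uniform bounds on $\alpha_k$ over all triangles. It splits into three regimes. First, when $\max\{\delta_\lambda,\delta_{\mathcal P}\}$ is large, a direct comparison with inscribed and circumscribed rectangles gives $\lambda(P)\approx a^2\approx L^2(P)$ (with $a$ the long side), so the two deficits are comparable without invoking Corollary \ref{Y} at all. Second, when both deficits are bounded by some $a_*$ but the Fraenkel-type asymmetry satisfies $\inf_R|R(P)\Delta sP_3|/|P|\ge w$, polygonal isoperimetric stability plus Corollary \ref{Y} give $\delta_{\mathcal P}\ge z_1 w^2$ and $\delta_\lambda\ge z_2 w^2$, so each ratio is at most $a_*/(z_i w^2)$. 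Only in the third regime, asymmetry below a small $w$, are all sides pinned near $\sqrt{|P_3|}\cdot 2/3^{1/4}$ and hence all $\xi_k,b_k$ near their equilateral values; there your argument (i)+(ii) goes through verbatim, and this is precisely what the paper does. The idea you are missing is this reduction to the near-equilateral case via the rectangle comparison and the asymmetry lower bounds, rather than pushing the formula of Corollary \ref{Y} uniformly across the whole moduli space of triangles.
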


\begin{cor} \label{Yzk6}
Let $P \in \mathcal{M}(3,A)$ and suppose $a_0$ $\&$ $a_1$ are the constants in Theorem \ref{Yzk}. 
Set
$$
\overline{\theta}_{\alpha}:=\frac{a_0}{12 \sqrt{3}}
$$

$$
\overline{\theta}_{\mathcal{A}}:=\frac{a_1}{12 \sqrt{3}},
$$
then
$$
\frac{4\pi^2}{A\sqrt{3}}+ \overline{\theta}_{\alpha} \frac{L^2-12 \sqrt{3}A}{A^2} \le \lambda(P) \le \frac{4\pi^2}{A\sqrt{3}}+\overline{\theta}_{\mathcal{A}}\frac{L^2-12 \sqrt{3}A}{A^2}.
$$
Moreover, if $0 < \epsilon<1$, there exists $P_\epsilon \in \mathcal{M}(3,1)$  such that 
 $$
  \frac{\pi^2}{16}(1-\epsilon)< \frac{\lambda(P_\epsilon)-\frac{4\pi^2}{A\sqrt{3}}}{L^2(P_\epsilon)-12\sqrt{3}}< \frac{\pi^2}{16}\frac{1}{(1-\epsilon)^2}.
 $$
\end{cor}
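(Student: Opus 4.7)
The first inequality is a direct algebraic reformulation of Theorem \ref{Yzk}. Taking $P_3$ equilateral of unit area so that $|P_3|\lambda(P_3)=\tfrac{4\pi^2}{\sqrt{3}}$ and $|P_3|^{3/2}=1$, for $P\in\mathcal{M}(3,A)$ I would substitute $\delta_{\mathcal{P}}(P)=\tfrac{L^2-12\sqrt{3}A}{12\sqrt{3}A}$ and $\delta_\lambda(P)=A\lambda(P)-\tfrac{4\pi^2}{\sqrt{3}}$ into the bound of Theorem \ref{Yzk}, divide through by $A$, and absorb the $\tfrac{1}{12\sqrt{3}}$ factor into $\overline{\theta}_\alpha=a_0/(12\sqrt{3})$ and $\overline{\theta}_{\mathcal{A}}=a_1/(12\sqrt{3})$. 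This immediately gives the stated double inequality.

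For the sharpness statement I would exhibit a one-parameter family $\{T_s\}\subset\mathcal{M}(3,1)$ degenerating to the equilateral triangle $P_3$ as $s\to 0$ and match leading-order expansions of numerator and denominator. A natural choice is the isosceles family with vertices $(-b/2,0)$, $(b/2,0)$, $(0,2/b)$ parametrized by $b=b_0(1+s)$, where $b_0=2/3^{1/4}$ renders $s=0$ equilateral. The perimeter expansion $L^2(T_s)-12\sqrt{3}=c_1 s^2+O(s^3)$ is elementary (the first-order term vanishes by the isoperimetric inequality at the equilateral). For the eigenvalue I would invoke Corollary \ref{E_4} together with one iterate of the partial symmetrization behind Theorem \ref{E} (whose first-order contribution vanishes by the reflection symmetry of the equilateral eigenfunction), yielding $\lambda(T_s)-\tfrac{4\pi^2}{\sqrt{3}}=c_2 s^2+o(s^2)$ with $c_2$ an explicit integral of $|\nabla u|^2$ over a boundary trace of the closed-form Lam\'e-type eigenfunction on $P_3$. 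An explicit computation of this integral, using the vertex asymptotics of Remark \ref{E_p} where the boundary meets the corner, is then expected to give $c_2/c_1=\pi^2/16$.

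Given the sharp limiting ratio $c_2/c_1=\pi^2/16$, for each $\epsilon\in(0,1)$ I would pick $s(\epsilon)>0$ sufficiently small that the $o(s^2)$ remainders trap the quotient within $(\tfrac{\pi^2}{16}(1-\epsilon),\,\tfrac{\pi^2}{16}(1-\epsilon)^{-2})$; setting $P_\epsilon:=T_{s(\epsilon)}$ closes the proof. The asymmetric form of the bracket is generous: the lower factor $(1-\epsilon)$ and upper factor $(1-\epsilon)^{-2}$ independently absorb the second-order errors from $L^2$ in the denominator and from $\lambda$ in the numerator, so the quantitative step reduces simply to verifying that both leading asymptotic constants agree with $\pi^2/16$.

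The main obstacle will be the explicit evaluation $c_2/c_1=\pi^2/16$. The perimeter expansion is routine, but the eigenvalue side requires the explicit closed-form equilateral eigenfunction together with careful control of the boundary integral near the three $\pi/3$ vertices (where the $r^{\alpha}\sin(\alpha\phi)$-type expansion of Remark \ref{E_p} governs $|\nabla u|$), and these ingredients must combine cleanly. If the explicit constant resists direct computation, an alternative plan is to bypass isosceles altogether and drop $T_s$ straight from the scalene family used in Corollary \ref{E_4}, solving the ratio in closed form from the formula for $\alpha_1$ there; in that case $\pi^2/16$ should emerge from $\tfrac{1}{b_0^2}$ together with the normalization of $u$ on $P_3$.
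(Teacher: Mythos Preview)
Your derivation of the first double inequality is exactly what the paper does: immediate algebra from Theorem~\ref{Yzk}.

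For the second assertion, however, the paper proceeds entirely differently, and your plan has a real gap. The paper does \emph{not} work near the equilateral triangle. Instead it takes an isosceles triangle $P$ of unit area with base $2a\to\infty$ and height $h=1/a$, and sandwiches $P$ between an outer rectangle $Q_a$ of sides $2a,h$ and an inner rectangle $R_a$ of sides $2a\epsilon,(1-\epsilon)h$. Domain monotonicity and the explicit rectangle eigenvalues give
\[
\pi^2\Bigl[a^2+\tfrac{1}{4a^2}\Bigr]<\lambda(P)<\pi^2\Bigl[\tfrac{a^2}{(1-\epsilon)^2}+\tfrac{1}{4a^2\epsilon^2}\Bigr].
\]
Since $L(P)=2a+2\sqrt{a^2+1/a^2}$ forces $L^2(P)-12\sqrt{3}\sim 16a^2$, both bounds divided by $L^2-12\sqrt{3}$ tend to $\pi^2/16$ and $\pi^2/(16(1-\epsilon)^2)$ respectively, which for $a$ large yields the bracket. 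This is elementary and explains why the bracket is asymmetric: the factor $(1-\epsilon)^{-2}$ is literally the contribution of the inscribed rectangle's shorter side.

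Your route expands around the equilateral and hopes that the second-order ratio $c_2/c_1$ equals $\pi^2/16$. You give no argument for this, and there is no reason it should hold: the constant $\pi^2/16$ is an artifact of the \emph{degenerate} regime ($\lambda\sim\pi^2 a^2$, $L^2\sim 16a^2$), not of the Hessian of $\lambda$ at $P_3$. The near-equilateral limiting ratio is governed by boundary integrals of the explicit Lam\'e eigenfunction and could well be a different number; indeed Remark~\ref{Yzk8} records only the one-sided consequence $\overline{\theta}_\alpha\le\pi^2/16$, consistent with $\pi^2/16$ being attained along a specific degeneration rather than universally. So even if you carried out the delicate vertex computation you outline, the output need not match the target constant, and the proof would fail at the final step.
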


\begin{rem} \label{Yzk8}
Note that when $\epsilon$  is selected very close to $0$, 

$$
\frac{\lambda(P_\epsilon)-\frac{4\pi^2}{A\sqrt{3}}}{L^2(P_\epsilon)-12\sqrt{3}}
$$ 
is very close to $\frac{\pi^2}{16}.$ 
In particular 

$$
\overline{\theta}_{\alpha} \le  \frac{\pi^2}{16}.
$$
The value $\frac{\pi^2}{16}$ is less than a conjectured value $\approx .67$ \cite[p. 339]{MR2264470}. Observe in addition that the constants $\overline{\theta}_{\alpha}, \overline{\theta}_{\mathcal{A}}$ can be estimated via the constants that appear in the proof of Theorem \ref{Yzk}. Moreover, the conjecture contains the two inequalities in \cite[(5-2), p. 338]{MR2264470} and \cite[Conjecture 5.1., p. 339]{MR2264470} is already proven \cite{MR2369934} which includes the first of those which is the upper bound with the sharp constant $\frac{\pi^2}{9}$, but the method utilizes Mathematica. The result in Corollary \ref{Yzk6} contains both upper and lower bounds and is independent of \cite{MR2369934} (furthermore, the proof does not depend on a computer); some other inequalities were proved in \cite{MR2779073}.

\end{rem}

Also, the equivalence (more specifically, the lower bound in Corollary \ref{Yzk6}) solves the sharp polygonal Faber-Krahn stability problem for triangles with an explicit constant: Corollaries \ref{Yz} \& \ref{Yzo}. The sharp Faber-Krahn stability problem with a non-explicit constant was solved in \cite{MR3357184}; the sharp isoperimetric stability problem with a non-explicit constant was solved in \cite{MR2456887} and with an explicit constant in \cite{MR2672283}.
One interesting fact is that the simpler-to-state polygonal isoperimetric inequality stability problem with an explicit constant was solved by the author and Nurbekyan \cite{MR3327086} after the more general isoperimetric stability.

\begin{cor} \label{Yz}
There exists a computable $a>0$ such that if $T$  is a triangle 
$$
|T|\lambda(T)-\lambda(T_{Eq}) \ge a \Big(\frac{|R(T) \Delta sT_{Eq}|}{|T|}\Big)^2,
$$
where $T_{Eq}$ is the equilateral triangle with area $1$, $R$ a rigid motion, $s=\sqrt{|T|}$, and the exponent $2$ is sharp. 
\end{cor}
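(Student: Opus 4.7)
The plan is to combine the lower half of Corollary~\ref{Yzk6} (equivalently, the left-hand inequality in Theorem~\ref{Yzk}) with the explicit-constant triangular isoperimetric stability from Indrei--Nurbekyan \cite{MR3327086}. Since $|T|\lambda(T)$ is scale-invariant and the area-$1$ equilateral triangle satisfies $\lambda(T_{Eq})=\tfrac{4\pi^{2}}{\sqrt{3}}$, writing $A=|T|$ one has $|T|\lambda(T)-\lambda(T_{Eq})=A\lambda(T)-\tfrac{4\pi^{2}}{\sqrt{3}}$. Multiplying the left inequality of Corollary~\ref{Yzk6} by $A$ produces
$$
A\lambda(T)-\frac{4\pi^{2}}{\sqrt{3}}\;\ge\;\overline{\theta}_{\alpha}\,\frac{L^{2}-12\sqrt{3}\,A}{A},
$$
with the constant $\overline{\theta}_{\alpha}$ tracked through the proof of Theorem~\ref{Yzk}, so after this step the scale-invariant Faber--Krahn deficit is controlled from below by the scale-invariant isoperimetric deficit.

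The second ingredient is the Bonnesen-type inequality for triangles in \cite{MR3327086}: an explicit $c_{0}>0$ exists for which every triangle satisfies
$$
\frac{L^{2}-12\sqrt{3}\,A}{A}\;\ge\;c_{0}\left(\frac{|R(T)\,\Delta\,sT_{Eq}|}{|T|}\right)^{2}
$$
after infimizing over rigid motions $R$, with $s=\sqrt{|T|}$. Chaining this with the previous display yields the claimed stability inequality with the fully computable constant $a=\overline{\theta}_{\alpha}\,c_{0}$.

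For the sharpness of the exponent $2$, I would exhibit a smooth one-parameter family $T_{\varepsilon}$ of unit-area triangles with $T_{0}=T_{Eq}$, produced by sliding a single vertex along a direction tangent to the area constraint but transverse to the equilateral stratum. For such a family the optimal symmetric difference $\inf_{R}|R(T_{\varepsilon})\,\Delta\,T_{Eq}|$ is linear in $\varepsilon$, whereas $\varepsilon\mapsto\lambda(T_{\varepsilon})$ is $C^{2}$ with vanishing first variation at the minimizer $T_{Eq}$. Consequently both sides of the inequality are of order $\varepsilon^{2}$ with a nondegenerate ratio, so no exponent $p<2$ can work as $\varepsilon\to 0^{+}$.

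The main obstacle is the quantitative tracking of $\overline{\theta}_{\alpha}$ through Theorem~\ref{Yzk}: the partial-symmetrization identity of Theorem~\ref{a}, the $W^{2,p}/BMO$ estimates, and the circulant-matrix spectral bounds must each be refined to produce a numerical constant rather than a mere positivity statement, and one has to verify that none of these steps blows up as the triangle degenerates (otherwise the resulting $a$ would be zero). Once $\overline{\theta}_{\alpha}$ and $c_{0}$ are in hand, the remainder of the argument is a single substitution together with the Taylor expansion for sharpness.
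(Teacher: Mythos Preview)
Your proposal is correct. The route you take is close to the paper's but packaged more cleanly: you invoke Corollary~\ref{Yzk6} (the global lower bound $\delta_\lambda\gtrsim\delta_{\mathcal P}$) as a black box and then chain it with the triangular isoperimetric stability of \cite{MR3327086}, whereas the paper's own proof first localizes to a neighborhood of $T_{Eq}$ (handling large $\delta_\lambda$ trivially via the boundedness of the asymmetry), then re-derives the lower bound directly from Corollary~\ref{Y} in the form $\lambda(T)-\lambda(T_{Eq})\ge e\sum_k t_{k-1}^{2}$, and finally feeds this into the perimeter-controlled chain \eqref{ozm} (which already contains the isoperimetric stability constant $\alpha(n)$). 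The underlying ingredients are identical; your version simply avoids repeating the localization that is already absorbed into Theorem~\ref{Yzk}. For sharpness, your abstract second-variation argument is fine and equivalent to what the paper does; the paper is just more explicit, using the formula of Corollary~\ref{E_4} for $\lambda(P_t)$ together with a similar-triangles computation to show $|P_t\Delta P_3|\approx t$ and then arguing by contradiction against a modulus $h(a)=o(a)$.
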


With 

$$
\mathcal{A}(T):=\inf \Big\{\frac{|R(T) \Delta sT_{Eq}|}{|T|}: \text{ where $R$ is a rigid motion $\&$ $|sT_{eq}|=|T|$}\Big\}
$$
(the Fraenkel asymmetry), the previous then yields:

\begin{cor} \label{Yzo}
There exists an explicit $a>0$ such that if $T$  is a triangle and $T_{eq}$ is an equilateral triangle 
$$
|T|\lambda(T)-|T_{eq}|\lambda(T_{Eq}) \ge a \mathcal{A}^2(T),
$$
and the exponent $2$ is sharp. 
\end{cor}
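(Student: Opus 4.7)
The plan is to derive Corollary \ref{Yzo} as a short specialization of Corollary \ref{Yz}, followed by a brief sharpness argument. First I would align the two left-hand sides using the scale-invariance of the shape functional $\Omega \mapsto |\Omega|\lambda(\Omega)$: under $x \mapsto s x$ one has $|s\Omega| = s^{2}|\Omega|$ and $\lambda(s\Omega)=s^{-2}\lambda(\Omega)$, so applied to any equilateral $T_{eq}$ and to the unit-area reference $T_{Eq}$ one gets $|T_{eq}|\lambda(T_{eq}) = |T_{Eq}|\lambda(T_{Eq}) = \lambda(T_{Eq})$. Hence the left-hand side of Corollary \ref{Yzo} coincides with that of Corollary \ref{Yz}.

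Next, since Corollary \ref{Yz} furnishes the inequality
$$
|T|\lambda(T)-\lambda(T_{Eq}) \ge a \left(\frac{|R(T) \Delta s T_{Eq}|}{|T|}\right)^{2}
$$
with $s = \sqrt{|T|}$ (so that $|sT_{Eq}|=|T|$) and with the left-hand side independent of the rigid motion $R$, I would take the infimum over $R$ inside the square on the right. By the definition of the Fraenkel-type asymmetry this infimum equals $\mathcal{A}(T)$, which produces the desired lower bound
$$
|T|\lambda(T)-|T_{eq}|\lambda(T_{Eq}) \ge a \, \mathcal{A}^{2}(T)
$$
with the same explicit constant $a$ supplied by Corollary \ref{Yz}.

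For the sharpness of the exponent $2$ I would construct a one-parameter family $\{T_{\varepsilon}\}$ converging to $T_{Eq}$ as $\varepsilon \to 0^{+}$ by displacing one vertex by $\varepsilon$ in a direction transverse to the opposite edge and then rescaling to preserve area. A direct computation of the symmetric difference yields $|T_{\varepsilon} \Delta T_{Eq}| = \Theta(\varepsilon)$, so $\mathcal{A}^{2}(T_{\varepsilon}) = \Theta(\varepsilon^{2})$. On the other hand, the upper bound in Corollary \ref{Yzk6} controls $|T_{\varepsilon}|\lambda(T_{\varepsilon}) - \lambda(T_{Eq})$ by a constant multiple of the polygonal isoperimetric deficit, which is smooth in vertex coordinates and vanishes to second order at $T_{Eq}$, hence is $O(\varepsilon^{2})$. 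Both sides therefore scale as $\varepsilon^{2}$, which precludes any exponent strictly smaller than $2$. The main concern in this step is choosing the perturbation direction so that the leading-order contributions to $|T_{\varepsilon} \Delta T_{Eq}|$ and to the perimeter deficit genuinely survive rather than cancelling along a symmetry axis of $T_{Eq}$; the derivation itself in the first two steps is essentially a tautology once Corollary \ref{Yz} is in hand.
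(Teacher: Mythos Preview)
Your derivation is correct and matches the paper's intent: the paper does not give a separate proof of Corollary~\ref{Yzo} but simply writes ``the previous then yields'' after Corollary~\ref{Yz}, so taking the infimum over rigid motions and using scale-invariance of $|\cdot|\lambda(\cdot)$ is exactly what is meant. The only minor difference is in the sharpness step: the paper's proof of Corollary~\ref{Yz} obtains the upper bound $\lambda(P_t)-\lambda(T_{Eq})=O(t^2)$ directly from the expansion in Corollary~\ref{E_4}, whereas you route it through the isoperimetric-deficit upper bound of Corollary~\ref{Yzk6}; both are valid and already available in the paper. Your caveat about the lower bound $\mathcal{A}(T_\varepsilon)\gtrsim\varepsilon$ is well placed---it does not follow from $|T_\varepsilon\Delta T_{Eq}|=\Theta(\varepsilon)$ alone, but is easily justified since side lengths are rigid-motion invariants and the perturbed triangle has a side differing from the equilateral side by $\Theta(\varepsilon)$.
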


\begin{rem}
One can construct a simple example of a pair of non-isometric simply-connected
domains in the euclidean plane which are isospectral \cite{MR1181812}. Another construction, discovered by Milnor, exhibits
two $16-$dimensional toruses which are distinct as Riemannian manifolds but have
the same sequence of eigenvalues \cite{MR162204}. Nevertheless, observe that one can hear the shape of an equilateral triangle and the stability thus yields that one can compare frequencies and detect a near-equilateral triangle. Also, one only requires one eigenvalue, not all of them!
\end{rem}

Observe that Theorem \ref{E} also generates a formula for the eigenvalue of a quadrilateral. The limit $P^\infty$ is a rhombus and therefore there is some additional information which allows a more detailed expression: more specifically, the rhombus can be changed into a rectangle with one Steiner symmetrization and the tensor theory can be utilized for the eigenvalue calculation.

 \begin{thm} \label{Eu}
Assume $P \in \mathcal{M}(4,\alpha)$ is convex. Then there exists $\Psi: \mathbb{R}^+ \rightarrow \mathbb{R}^+$ 
with
$$
\lim_{t \rightarrow 0^+} \frac{\Psi(t)}{t^2}=0 
$$ 
such that 

$$
 \lambda(P)=\pi^2\Big(\frac{1}{L^2}+\frac{1}{l^2}\Big)+\sum_{k=2}^\infty \alpha_kt_{k-1}+\sum_{k=2}^\infty  \beta_k\frac{t_{k-1}^2}{2}+ \Psi(a),
$$
where $P^1=P$, $P^{k}=(P^{k-1})^{*}$ represents the n-gon constructed from $P_{k-1}$ as in the proof of Theorem \ref{a}, 
$$
\alpha_k=\frac{d \lambda(P^{k})}{dt}\Big|_{t=0}=\int_0^{\xi_k} \frac{\alpha}{\xi_k}|\nabla u_{4,k}(\alpha, y_{-})|^2 d\alpha-\int_0^{\xi_k} \frac{\alpha}{\xi_k} |\nabla u_{4,k}(\alpha, y_+)|^2 d\alpha,
$$

\begin{align*}
\beta_k=\frac{d^2 \lambda(P^{k})}{d t^2}\Big|_{t=t_{e_{k-1}}}&=\frac{2(\frac{b_k}{2}-t_{e_{k-1}})}{\xi_k(\xi_k^2+(t_{e_{k-1}}-\frac{b_k}{2})^2)}\int_0^{\xi_k} \alpha |\nabla u_{4, t_{e_{k-1}}}(\alpha, y_+)|^2 d \alpha+\\
&\frac{2(\frac{b_k}{2}+t_{e_{k-1}})}{\xi_k(\xi_k^2+(t_{e_{k-1}}+\frac{b_k}{2})^2)}\int_0^{\xi_k} \alpha |\nabla u_{4, t_{e_{k-1}}}(\alpha, y_{-})|^2 d \alpha,
\end{align*}
$y_{\pm}, \xi_k, b_k, t_{k-1}, l, L, P_R$ are calculated explicitly from $\{P^k\}$ with $P_R$ a rectangle with area $\alpha$ $\&$ sides $l, L$ constructed via one Steiner symmetrization applied to the rhombus $P^\infty$,  $t_{e_{k-1}} \in [0, t_{k-1}]$, $u_{4, k}$ $\&$ $u_{4, t_{e_{k-1}}}$ denote the corresponding eigenfunctions, and $0 \le a \le a_*dist(P_R, P^\infty)$ for a universal $a_*>0$.
\end{thm}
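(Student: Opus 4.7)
The plan is to reduce Theorem \ref{Eu} to Theorem \ref{E} by exploiting the rigidity of convex equilateral quadrilaterals. Applying Theorem \ref{E} with $n=4$ to $P\in\mathcal{M}(4,\alpha)$ yields the expansion
$$\lambda(P) = \lambda(P^\infty) + \sum_{k=2}^\infty \alpha_k t_{k-1} + \sum_{k=2}^\infty \beta_k \frac{t_{k-1}^2}{2},$$
where $P^\infty$ is an equilateral convex quadrilateral, hence a rhombus with side length $s$ and acute angle $\theta$ satisfying $s^2\sin\theta=\alpha$. Horizontal cross-sections of such a rhombus taken perpendicular to a pair of parallel sides all have length $s$, so a single Steiner symmetrization in that direction produces an explicit rectangle $P_R$ with sides $l=s\sin\theta$ and $L=s$ of area $\alpha$, whose Dirichlet eigenvalue is the separable $\pi^2(1/l^2+1/L^2)$. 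This supplies the explicit leading term in the claimed identity.

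It remains to control the discrepancy $\lambda(P^\infty)-\lambda(P_R)$ by $\Psi(a)$. I would interpolate rectangle to rhombus by a one-parameter shear flow $\{Q_r\}_{r\in[0,a]}$ with $Q_0=P_R$ and $Q_a=P^\infty$, where $a$ is a natural shear parameter bounded by $a_*\,\text{dist}(P_R,P^\infty)$. Applying Hadamard's formula together with the moving-surface tensor calculus that drives Theorem \ref{a}, I would argue that the first derivative of $r\mapsto\lambda(Q_r)$ at $r=0$ vanishes by the left-right symmetry of $P_R$: the shear's normal velocity on the two lateral sides is antisymmetric while the Hadamard integrand $|\nabla u|^2$ built from $u(x,y)=\sin(\pi x/L)\sin(\pi y/l)$ is symmetric. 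Setting $\Psi(a):=\lambda(P^\infty)-\lambda(P_R)\ge 0$ (non-negativity from the monotonicity of Steiner symmetrization), the problem reduces to verifying $\Psi(t)/t^2\to 0$ as $t\to 0^+$.

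The main obstacle is this last step. A generic perturbation of a rectangle produces an $O(a^2)$ eigenvalue change rather than $o(a^2)$, so the desired sub-quadratic bound must exploit the specific form of the shear together with the product structure of the rectangle's eigenfunction. I would pursue this via the explicit Hadamard second variation, isolating the contribution from the two moving lateral edges and carrying out an integration-by-parts that exploits a cancellation between the $\sin\otimes\sin$ eigenfunction and the linear displacement $y\cot\theta$, supplemented by the $W^{2,p}/BMO$ regularity and the cone expansion of eigenfunctions near the corners of $P_R$ recalled in Remark \ref{E_p}. As a fallback, if genuine quadratic behavior cannot be excluded, one redefines $\Psi$ to absorb it and verifies $\Psi(t)/t^2\to 0$ under the auxiliary smallness hypothesis that $\theta$ is sufficiently close to $\pi/2$, consistent with the bound $a\le a_*\,\text{dist}(P_R,P^\infty)$.
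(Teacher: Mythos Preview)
Your reduction via Theorem \ref{E}, the identification of $P^\infty$ as a rhombus, the Steiner symmetrization to the rectangle $P_R$, and the vanishing of the first Hadamard derivative by the rectangle's symmetry are all correct and match the paper. The gap is in the second derivative. You flag the $o(a^2)$ bound as the ``main obstacle'' and propose integration-by-parts cancellations, $W^{2,p}$/BMO estimates, cone expansions near the corners, or a fallback restriction to $\theta$ near $\pi/2$. None of this is needed, and the fallback would not prove the theorem as stated, which covers \emph{all} convex $P\in\mathcal{M}(4,\alpha)$, not just those whose limiting rhombus is nearly square.

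The paper simply carries the same tensor-calculus computation of $\frac{d^2\lambda}{dt^2}$ that drives Theorem \ref{a} through for the specific parallelogram shear $P_R \to P^\infty$ (one side fixed on the $y$-axis, the opposite side translated vertically by $t$). For this flow the $\frac{\delta C}{\delta t}$, $C\,(\nabla C\cdot N)$, and curvature contributions on the two moving edges assemble into the closed formula
\[
\frac{d^2\lambda}{dt^2}=\frac{2t}{\xi(\xi^2+t^2)}\Big(\int_0^\xi \alpha\,|\nabla u(\alpha,y_+)|^2\,d\alpha-\int_0^\xi \alpha\,|\nabla u(\alpha,y_-)|^2\,d\alpha\Big),
\]
with the rigidly translating side contributing nothing since its normal velocity $C$ vanishes identically. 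At $t=0$ this is zero twice over: the prefactor $2t/(\xi(\xi^2+t^2))$ vanishes, and the bracket is the same integral difference that already killed $\frac{d\lambda}{dt}\big|_{t=0}$ by the rectangle's reflection symmetry. Hence $\lambda(P^\infty)=\lambda(P_R)+o((t^*)^2)$ directly from Taylor's theorem, and $\Psi$ is simply this remainder, extended to a function on $\mathbb{R}^+$. What you are missing is not a delicate cancellation but a structural one: the explicit second-variation formula for this particular shear is manifestly $O(t)$, so the $o(a^2)$ claim falls out for free once you actually compute it rather than speculate about it.
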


Interestingly, an application of Theorem \ref{a} \& Corollary \ref{pa} naturally appears in the context of electron bubbles \cite{MR2264470nop}. Electron bubbles form when electrons enter into liquid helium: the electrons repel the helium atoms and form areas (cavities) free of helium. The equilibrium is obtained in terms of minimizing
$$
E=\Psi \lambda+\Sigma\int_S dS+\Pi \int_\Omega d\Omega,
$$
with $\Sigma$ the surface tension density, $\Pi$ the hydrostatic pressure, $\lambda$ the eigenvalue, $\Psi = \frac{h^2}{8\pi^2 m}$, $h$ Planck's constant, and $m$ the electron's mass. 

The technique of generating $\mathcal{A}_{n}(\alpha)$ illuminates the bubble equilibrium which may directly be computed by a scaling argument thanks to Corollary \ref{pa} and the polygonal isoperimetric inequality. 

\begin{cor}[of Theorem \ref{a}] \label{o}
Assume $P_n$ is the convex regular polygon with area $\pi$. There exists $N \ge 5$ such that for all $n \ge N$,   
$aP_n$ minimizes the energy

\begin{align*}
E&=\Psi \lambda+\Sigma \int_S dS+\Pi \int_\Omega d\Omega
\end{align*}
in 

$$
\bigcup_{\alpha>0} \mathcal{A}_{n}(\alpha),
$$
and $a$ satisfies

$$
\frac{-2 \Psi \lambda(P_n)}{\Sigma \mathcal{H}^1(\partial P_n)}+a^3+\frac{2\Pi \pi}{\Sigma \mathcal{H}^1(\partial P_n)}a^4=0.
$$
\end{cor}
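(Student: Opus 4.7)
The plan is to decouple the three-term energy by exploiting the fact that two of its shape-dependent pieces—the eigenvalue and the perimeter—are simultaneously minimized, at each fixed area, by the same configuration, namely the regular $n$-gon. This reduces the minimization of $E$ over $\bigcup_{\alpha>0}\mathcal{A}_n(\alpha)$ to a one-parameter problem in the scaling factor $a$, which yields the stated quartic as its Euler--Lagrange equation.

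Concretely, I would fix $\alpha>0$ and restrict $E$ to the slice $\mathcal{A}_n(\alpha)$. On this slice, the third term $\Pi\int_\Omega d\Omega = \Pi\alpha$ is constant. By Theorem \ref{a}, applied to the scaling of $\mathcal{A}_n(\pi)$ that produces $\mathcal{A}_n(\alpha)$, the first term $\Psi\lambda$ is minimized on $\mathcal{A}_n(\alpha)$ by the regular $n$-gon $P_{n,\alpha}$ of area $\alpha$. By the classical polygonal isoperimetric inequality, the second term $\Sigma\int_S dS$ is minimized among \emph{all} $n$-gons of area $\alpha$ (hence a fortiori on $\mathcal{A}_n(\alpha)$) by the same $P_{n,\alpha}$. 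Therefore each slice attains its minimum at $P_{n,\alpha}$, and since $\{P_{n,\alpha}\}_{\alpha>0} = \{aP_n : a>0\}$, the full infimum reduces to $\inf_{a>0} E(aP_n)$. Using the scalings $|aP_n| = a^2\pi$, $\mathcal{H}^1(\partial(aP_n)) = a\,\mathcal{H}^1(\partial P_n)$, and $\lambda(aP_n) = a^{-2}\lambda(P_n)$, I obtain
$$
f(a) := E(aP_n) = \frac{\Psi \lambda(P_n)}{a^2} + \Sigma a\,\mathcal{H}^1(\partial P_n) + \Pi\pi a^2.
$$
Since $f(a)\to +\infty$ as $a\to 0^+$ (first term) and as $a\to+\infty$ (third term), a positive minimizer exists. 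Setting $f'(a)=0$ and multiplying through by $a^3/(\Sigma\,\mathcal{H}^1(\partial P_n))$ recovers exactly the equation displayed in the corollary.

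The main obstacle, modest as it is, is the clean simultaneous-minimization observation in the first paragraph: without it, one could only bound the eigenvalue and the perimeter terms separately and would lose the equality characterization giving the explicit scale equation. Theorem \ref{a} supplies the first of the two required minimality properties within $\mathcal{A}_n(\alpha)$, and the polygonal isoperimetric inequality supplies the second; the scaling behavior of $\lambda$, perimeter, and area then does the rest. Uniqueness of the positive root of the quartic, while not claimed in the statement, would follow from strict convexity of $f$ on $(0,\infty)$ (immediate from $f''(a) = 6\Psi\lambda(P_n)/a^4 + 2\Pi\pi > 0$), so the minimizing scale $a$ is in fact unique.
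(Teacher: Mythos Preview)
Your proof is correct and follows essentially the same approach as the paper's own: fix the area, use Theorem \ref{a} for the eigenvalue term and the polygonal isoperimetric inequality for the perimeter term to reduce to the one-parameter family $\{aP_n\}$, then optimize the resulting function of $a$. Your additional remarks on existence (via $f\to\infty$ at the endpoints) and uniqueness (via $f''>0$) are welcome refinements that the paper omits.
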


\subsection{Applications}

The initial application involves the theory of sound. This was the subject in Lord Rayleigh's books \cite{pPZpoq}. P\'olya and Szeg\"o were studying elasticity problems and wrote \cite[vii]{pPZp}: ``The results hitherto obtained and discussed in this book allow already in some cases a fairly close estimate of physical quantities in which the engineer or the physicist may have a practical interest. And it seems possible to follow much further the road here opened." 

\subsubsection{Droplets}
The equilibrium configuration of an isolated droplet subject to only the physical phenomenon of surface tension is a sphere and is stable with respect to small perturbations of the droplet's surface. In the case when the total potential energy is minimized and the dynamics of the system are ignored, the Navier-Stokes/Euler equations are not employed. The droplet also may not need to be a fluid; it could be an isotropic solid or crystal \cite{pPZ8p}. The energy is 
$$
E=\int_S\sigma dS-\lambda\int_\Omega d\Omega.
$$  
The variation is
$$
\delta E = -\int_S C(\sigma B_\alpha^\alpha+\lambda)dS
$$
and thus the equilibrium condition is $B_\alpha^\alpha = \text{Constant}$, where $B_\alpha^\alpha$ is the mean curvature; therefore, the solution is a surface of constant curvature. The sphere, a set of two disjoint spheres, and also a shape assumed by a thin film supported by a wire loop satisfy the condition. Hence the constant curvature equation is complex. The idea to generate stable configurations is to have gradient descent with volume preservation: choosing a specific evolution by specifying $C$; e.g. $C=-\nabla_j\nabla^j B_\alpha^\alpha$. The example leads to 
$$
\frac{d E_{ST}}{dt}=-\int_S \nabla_jB_\beta^\beta\nabla^j B_\alpha^\alpha dS \le 0.
$$    
The stability properties of the equilibrium are then analyzed with the second variation.

\subsubsection{Electron Bubbles}
The droplet example may be modified via an eigenvalue in the energy \cite{MR2264470nop}. The equilibrium is obtained as already stated in terms of minimizing
$$
E=\frac{h^2}{8\pi^2 m} \lambda+\Sigma\int_S dS+\Pi\int_\Omega d\Omega,
$$
with $\Sigma$ the surface tension density, $\Pi$ the hydrostatic pressure, $\lambda$ the eigenvalue, $\Psi = \frac{h^2}{8\pi^2 m}$, $h$ Planck's constant, and $m$ the electron's mass. The excited states of the electrons correspond to different eigenvalues of the (negative) Laplacian and a problem in applied sciences is to study the equilibrium shapes for each eigenvalue.  The first eigenvalue corresponds to the ground state and this is stable. A surprising result is that the radially symmetric energy state corresponding to the second eigenvalue is morphologically unstable. The first variation is
$$
\delta E = \int_S C(-\Psi \nabla^j \Psi \nabla_j \Psi -\Sigma B_\alpha^\alpha+\Pi) dS;
$$ 
$C$ is an independent variation and this leads to
$$
\Psi  \nabla^j \Psi \nabla_j \Psi +\Sigma B_\alpha^\alpha-\Pi=0.
$$
If the surface is a sphere, the equation leads to an algebraic equation for the radius of the equilibrium. The stability is analyzed again via the second variation and in general is complicated, thus only computed at equilibrium configurations
$$
\delta^2 E = \int_S C\Big(2 \Psi \nabla^j \frac{\delta \Psi}{\delta t} \nabla_j \Psi+\Sigma(\nabla^\alpha \nabla_\alpha C+C B_\beta^\alpha B_\alpha^\beta) \Big)dS.
$$
Simulations of electron bubbles in terms of pressure are given in \cite{MR2264470nop}.

\subsection{Tools}
One way of analytically understanding the principal frequency is to consider a function $u$ defined in the interior of $\Omega$ and vanishing on $\partial \Omega$. The defining property of $\Lambda^2$ is:

$$
\frac{\int_\Omega |\nabla u|^2 d\Omega}{\int_\Omega u^2 d\Omega} \ge \Lambda^2.
$$
Equality holds if and only if $u=a w$, 

$$
\Delta w+\Lambda^2 w = 0;
$$
without loss, $w>0$ in $\Omega$. This $w$ characterizes the shape of a membrane when it vibrates emitting its deepest tone. 

If $S_t$ is an evolving surface and $A$ an object defined on $S_t$, the invariant derivative is \cite{MR2264470nop, Caltens}:

$$
\frac{\delta A(w)}{\delta t}=\lim_{h \rightarrow 0} \frac{A(w_h)-A(w)}{h}.
$$
To define the $\frac{\delta }{\delta t}$-derivative analytically, let $\xi^\alpha$ be surface coordinates and 

$$
z^i=z^i(\xi, t)
$$
a parametric equation for $S_t$. The velocity object is 

$$
v^i(\xi, t)=\frac{\partial z^i(\xi, t)}{\partial t};
$$ 
and, its surface projection is

$$
v^\alpha=z_i^\alpha v^i.
$$
Then

$$
\frac{\delta A}{\delta t}=\frac{\partial A(\xi, t)}{\partial t}-v^\alpha \nabla_\alpha A.
$$
This may also be defined for typical tensors $T_{j\beta}^{i\alpha}$:

$$
\frac{\delta T_{j\beta}^{i\alpha}}{\delta t}=\frac{\partial T^i}{\partial t} - v^\gamma \nabla_\gamma T_{j\beta}^{i\alpha}+v^m \Gamma_{mk}^{i} T_{j\beta}^{k \alpha}-v^m \Gamma_{mj}^k T_{k\beta}^{i\alpha} + \nabla_\gamma v^\alpha T_{j \beta}^{i \gamma}-\nabla_\beta v^\gamma T_{j \gamma}^{i \alpha},
$$
where $\Gamma_{jk}^i$ are the Christoffel symbols.
To analyze the surface's speed, one defines the velocity of the surface along the normal $N$ as the projection of the radius vector $z$ onto the normal $N$

$$
C=\frac{\delta z  }{\delta t} \cdot N.
$$

Assume $\phi(S,t)$ is a surface restriction of a spatial field $f$, then the chain rule is:

$$
\frac{\delta \phi}{\delta t}=\frac{\partial f (z,t)}{\partial t}+C N^i \nabla_i f,
$$
with $N$ the surface normal;
and, the product rule:

$$
\frac{\delta (\phi \gamma)}{\delta t}=\phi \frac{\delta \gamma}{\delta t}+\frac{\delta \phi}{\delta t} \gamma.
$$
If $\Omega_t$ evolves and its boundary is $S_t$, 

\begin{equation*} \label{j}
\frac{d}{dt} \int_{\Omega_t} f d\Omega_t = \int_{\Omega_t} \frac{\partial f}{\partial t} d\Omega_t + \int_{S_t} C f dS_t,
\end{equation*}

\begin{equation} \label{-}
\frac{d}{dt} \int_{S_t} \phi dS_t = \int_{S_t} \frac{\delta \phi}{\delta t} dS_t - \int_{S_t} CB_\alpha^\alpha \phi dS_t,
\end{equation}
where $B_\beta^\alpha$ is the curvature tensor and its trace is the mean curvature
$B_\alpha^\alpha$ (in the tensor calculus convention, a circle of radius $R$ has $B_\alpha^\alpha=-\frac{1}{R}$).

In classical perturbation theory, the bulk equation is the standard Laplace eigenvalue equation:

$$
\nabla_i \nabla^i u +\lambda u = 0;
$$
the quantum mechanics sign convention is that the operator is positive definite and the bulk is also coupled with the Dirichlet boundary condition $u|_S=0$. Lastly, $u$ is normalized
$$
\int_\Omega u^2 d\Omega =1.
$$
To analyze the perturbed system, the bulk is differentiated in the $\frac{\partial}{\partial t}$ sense, the boundary condition in the $\frac{\delta}{\delta t}$ sense, and the normalization in the $\frac{d}{dt}$ sense

\begin{equation} \label{n}
\nabla_i \nabla^i \frac{\partial u}{\partial t}+\frac{d \lambda}{d t}u+\lambda \frac{\partial u}{\partial t} = 0,
\end{equation}
in $\Omega_t$;

\begin{equation} \label{2}
\frac{\partial u}{\partial t}+CN^i \nabla_i u = 0,
\end{equation}
on $S_t$;

\begin{equation} \label{9}
\int_{\Omega_t} u \frac{\partial u}{\partial t}d\Omega_t=0.
\end{equation}
To compute $\frac{d \lambda}{dt}$, one multiplies \eqref{n} by $u$, integrates over $\Omega_t$, and utilizes \eqref{2} \& \eqref{9}:

\begin{align*}
0&=\int_{\Omega_t} \Big(u \nabla_i \nabla^i \frac{\partial u}{\partial t}+\frac{d\lambda}{dt}u^2+\lambda \frac{\partial u}{\partial t}u \Big) d \Omega_t\\
&=-\int_{\Omega_t} \nabla_iu \nabla^i \frac{\partial u}{\partial t} d \Omega_t+\frac{d \lambda}{dt} \\
&=- \int_{S_t}N^i \frac{\partial u}{\partial t} \nabla_i u d S_t  +\frac{d \lambda}{dt}\\
&=  \int_{S_t}C(N^i \nabla_i u)^2 d S_t  +\frac{d \lambda}{dt}.
\end{align*}
Hence

$$
\frac{d \lambda}{dt}=-\int_{S_t}C(N^i \nabla_i u)^2 d S_t.
$$
The surface gradient $\nabla_\alpha u$ vanishes since $u|_{S_t}=0$, and therefore 

$$
\nabla^i u \nabla_i u= (N^i \nabla_i u)^2.
$$
Analogously, \eqref{-} and the tensor calculus formulas yield

\begin{align*}
\frac{d^2 \lambda}{dt^2}&=-\frac{d}{dt} \int_{S_t}C\nabla^i u \nabla_i u d S_t\\
&=-\int_{S_t} \frac{\delta C}{\delta t}(\nabla^i u \nabla_i u) d S_t -2\int_{S_t} C(\nabla^i \frac{\partial u}{\partial t}+C N^j \nabla_j \nabla^i u) \nabla_i udS_t\\
&+ \int_{S_t}C^2 B_\alpha^\alpha \nabla^i u \nabla_i u dS_t.
\end{align*}

Let $n \geq 3$ and $P \subset \mathbb{R} ^2$ be an $n$-gon generated by the set of vertices $\{A_1, A_2,
\ldots, A_n \} \subset  \mathbb{R}^2$ whose center of mass $O$ is taken to be the origin. For $i \in \{1,2,\ldots,n\}$, the $i$-th side length of $P$, denoted by $l_i:=A_iA_{i+1}$, is the length of the vector $\overrightarrow{A_iA_{i+1}}$ which connects $A_i$ to $A_{i+1}$, where $A_i=A_j$ if and only if $i=j$ (mod $n$); with this notation in mind, $\{r_i:=OA_i\}_{i=1}^n$ is the set of radii. Furthermore, $x_i$ is the angle between the vectors $\overrightarrow{OA_i}$ and $\overrightarrow{OA_{i+1}}$ and the set $\{x_i\}_{i=1}^n$ comprises the barycentric angles of $P$.

The circulant matrix method introduced in \cite{MR3327086} is based on the idea that a large class of polygons can be viewed as points in $\mathbb{R}^{2n}$ satisfying some constraints. One way of generating a large collection of polygons for investigating the P\'olya-Szeg\"o problem is by letting $\alpha>0$ $\&$ setting
\begin{equation*}
  \mathcal{M}(n, \alpha):=\Big \{ (x;r) \in  \mathbb{R} ^{2n}:\ x_i,r_i \geq 0,\ \eqref{eq: sum_x_i=2pi},\ \eqref{eq: alpha},\ \eqref{eq: centroid_cond} \ \mbox{hold} \Big\},
\end{equation*}
where
\begin{equation}\label{eq: sum_x_i=2pi}
  \sum \limits _{i=1}^{n} x_i=2 \pi,
\end{equation}
\begin{equation}\label{eq: alpha}
 \frac{1}{2} \sum \limits_{i=1}^n r_i r_{i+1} \sin x_i=\alpha.
\end{equation}
\begin{equation}\label{eq: centroid_cond}
  \begin{cases}
  \sum \limits _{i=1}^{n} r_i \cos \left(\sum \limits_{k=1}^{i-1} x_k \right)=0,\\
  \sum \limits _{i=1}^{n} r_i \sin \left(\sum \limits_{k=1}^{i-1} x_k \right)=0.
  \end{cases}
\end{equation}
$\mathcal{M}(n, \alpha)$ is a $(2n-4)$--dimensional polygonal manifold where each point $(x; r) \in \mathcal{M}(n, \alpha)$ represents a polygon centered at the origin with barycentric angles $x$, area $\alpha$, and radii $r$: a point $O$ is the barycenter of the set of vertices of $P$ means
\begin{equation*}
  \sum \limits _{i=1}^{n} \overrightarrow{OA_i}=0,
\end{equation*}
which is equivalent to saying that the projections of $\sum \limits _{i=1}^{n} \overrightarrow{OA_i}$ onto $\overrightarrow{OA_1}$ and $\overrightarrow{OA_1}^\perp$ vanish, therefore $(x;r)$ satisfies \eqref{eq: centroid_cond}. 

Furthermore, \eqref{eq: sum_x_i=2pi} is satisfied by all convex polygons (\& many nonconvex ones) and \eqref{eq: alpha} is encoding the given area. Note that the convex regular $n$-gon corresponds to the point $(x_*; r_*)=\left(\frac{2\pi}{n},\ldots,\frac{2\pi}{n};1,\ldots,1\right)$. 
Therefore, the variance of the interior angles, radii of $P$, and sides of $P$ are represented, respectively, by the quantities 

$$\sigma_a^2(P)=\sigma_a^2(x;r):=\frac{1}{n} \sum \limits_{i=1}^{n} x_i^2 - \frac{1}{n^2} \left(\sum \limits_{i=1}^{n}
x_i\right)^2,$$

$$\sigma_r^2(P)=\sigma_r^2(x;r):=\frac{1}{n} \sum \limits_{i=1}^{n} r_i^2 - \frac{1}{n^2} \left(\sum \limits_{i=1}^{n}
r_i\right)^2,$$

\begin{align*}
&\sigma_s^2(P)=\sigma_s^2(x;r):=\\
&\frac{1}{n} \sum \limits_{i=1}^{n} \left( r_{i+1}^2+r_i^2-2r_{i+1}r_i \cos x_i\right) - \frac{1}{n^2} \left(\sum \limits_{i=1}^{n}
\left( r_{i+1}^2+r_i^2-2r_{i+1}r_i \cos
x_i\right)^{1/2}\right)^2,
\end{align*}
\& in $(x;r)$ coordinates, the deficit is given by 

\begin{align*}
\delta(P)&=\delta(x; r):=\\
&\left(\sum \limits_{i=1}^{n} \left( r_{i+1}^2+r_i^2-2r_{i+1}r_i \cos
x_i\right)^{1/2}\right)^2-2n  \tan \frac{\pi}{n} \sum \limits_{i=1}^n r_i r_{i+1} \sin x_i.
\end{align*} 
In my approach to address the P\'olya-Szeg\"o problem, the stability of the polygonal isoperimetric inequality has a central importance. The inequality proved in \cite{MR3327086} when $P$ is a convex polygon and improved in \cite{MR3487241} compares the deficit with the variation $v(P)=\sigma_s^2(P)+\sigma_r^2(P)$: 

$$
v(P)+|P|\sigma_a^2(P) \le c_n \delta(P). 
$$

\section{The P\'olya-Szeg\"o problem; a formula for the principal frequency of a convex polygon; a 2006 conjecture of Antunes and Freitas; a solution to the sharp polygonal Faber-Krahn stability problem for triangles}
\subsection{Proof of Theorem \ref{a}}
\begin{proof}
Assume firstly $P\in \mathcal{M}(n, \alpha)$ is convex and without loss of generality $\alpha=|P_n|$ for the regular convex polygon inscribed in $\mathbb{S}^1$. Let $v_1, v_2, v_3$ denote three vertices taken consecutively clockwise. 
Set $T_0$ as the triangle generated via the vertices and $T_0^*$, the triangle generated via a Steiner symmetrization with respect to the line (without loss the x-axis) perpendicular to the line containing $v_1, v_3$ (without loss the y-axis) and which intersects the mid-point (without loss the origin) of the segment between $v_1, v_3$. Let $v_2^*$ denote the symmetrized vertex on the x-axis, $\xi=|v_2^*|$, $b=2|v_1|$, $t^*=|v_2-v_2^*|$, 
$$l_{1,2}=\{(x,y_+(x)), 0\le x \le \xi\}$$ 
the line segment connecting $v_1$ and $v_2^*$, 
$$l_{3,2}=\{(x,y_-(x)), 0\le x \le \xi\}$$ 
the line segment connecting $v_3$ and $v_2^*$, $P^*$ the polygon with $T_0$ replaced by $T_0^*$. The principal frequency is invariant via reflection, hence without loss up to a reflection $t^*\ge 0$.
Let $P_t$ denote the polygon in the evolution via Steiner symmetrization where $P_0=P^*$ and $P_{t^*}=P$. Note that the vertices of the triangle generating the symmetry are $v_1, v_2^t, v_3$, where $v_2^t=(\xi, t)$. Define the line-segment connecting $v_1$ and $v_2^t$ via $l_{1,2}(t)$ as in Figure \ref{e9}.

\begin{figure}[htbp] 
\centering
\includegraphics[width=.84 \textwidth]{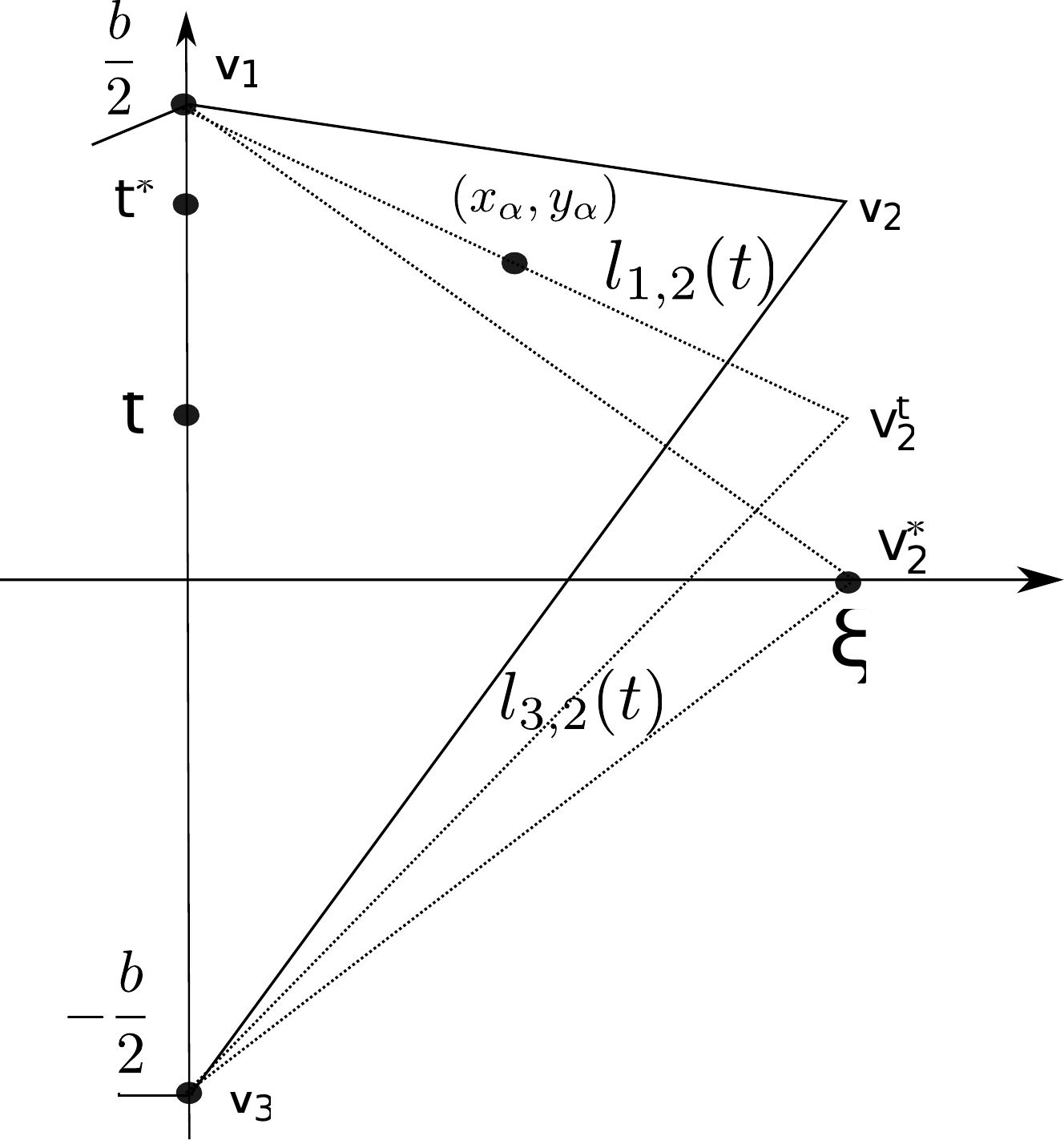}
\caption{}
\label{e9}
\end{figure}

\noindent The standard Laplace eigenvalue bulk equation in the interior of $P_t$ is 
\begin{equation*}
\nabla_i \nabla^i u+\lambda u =0;
\end{equation*}
the Dirichlet boundary condition is $u|_{S_t}=0$ and the normalization is 
$$
\int_{P_t} u^2 dP_t=1.
$$
The differentiated equations are 
$$
\nabla_i \nabla^i \partial_t u+\frac{d \lambda}{dt}u+\lambda \partial_t u = 0
$$ 
in the interior of $P_t$;
\begin{equation} \label{e}
\partial_t u + C N^i \nabla_i u = 0
\end{equation}
at the boundary $S_t$ where $N$ is the outer normal and $C$ the surface velocity; suppose $w_t \in l_{1,2}(t)\setminus\{v_1, v_2^t\}$, then one may extend the boundary equation on a neighborhood $Q_t$ of $w_t$ since $N$ is constant on $l_{1,2}(t)$ (the surface velocity can be extended since the particles move via the symmetrization). Thus 
\begin{equation} \label{e17}
\partial_t \nabla u+\nabla C(N\cdot \nabla u)+C D^2 u N =0
\end{equation}
in $Q_t$ thanks to $\big(\partial_{x_i} N^i\big) \partial_{x_j} u=0$; the normalization yields
$$
\int_{P_t} u \partial_t u dP_t =0.
$$
Therefore, integration by parts, the divergence theorem, and the equations imply
\begin{equation} \label{vw}
\frac{d \lambda}{dt}=-\int_{S_t} C(N^i \nabla_i u)^2 dS_t;
\end{equation}
\begin{align}
&\frac{d^2 \lambda}{dt^2} \label{q4e}\\ \notag
&=-\frac{d}{dt}\int_{S_t} C(N^i \nabla_i u)^2 dS_t\\\notag
&=-\int_{S_t} \frac{\delta C}{\delta t}(\nabla^i u \nabla_i u) d S_t-2 \int_{S_t} C(\nabla^i \partial_t u+C N^j \nabla_j \nabla^i u) \nabla_i udS_t\\\notag
&\hskip .981in + \int_{S_t}C^2 B_\alpha^\alpha \nabla^i u \nabla_i u dS_t.
\end{align}     
 Utilizing \eqref{e17}
 \begin{align*}
& \int_{S_t} C(\nabla^i \partial_t u+C N^j \nabla_j \nabla^i u) \nabla_i udS_t\\
&=\int_{S_t} C(- (N\cdot \nabla u)\nabla C \cdot \nabla u-CD^2uN\cdot \nabla u+C\nabla u \cdot \nabla \partial_N u) dS_t;
 \end{align*}    
 hence, as $N$ is constant in a neighborhood of an $interior-to-l_{1,2}(t)$ boundary point, 
 $$
 \nabla u \cdot \nabla \partial_N u=D^2uN \cdot \nabla u; $$   therefore, since $N$ is parallel to $\nabla u$ \& constant along the segment
 \begin{align*}
\frac{d^2 \lambda}{dt^2}&=-\int_{S_t} \frac{\delta C}{\delta t}(\nabla^i u \nabla_i u) d S_t-
2 \int_{S_t} C(-\nabla C \cdot N)|\nabla u|^2dS_t\\
&+ \int_{S_t}C^2 B_\alpha^\alpha \nabla^i u \nabla_i u dS_t;
&\end{align*} 
since the symmetrization induces solely the triangle to change, $C$ is vanishing on $S_t \setminus (l_{1,2}(t) \cup l_{3,2}(t))$. \\

{\bf The case: $y_{+}$ (the upper line-segment)} \\ 
Define
$$
 \begin{bmatrix}
x_\alpha
\\
y_\alpha
\end{bmatrix}=
 \begin{bmatrix}
\alpha
\\
-\frac{b}{2\xi}\alpha + \frac{b}{2}
\end{bmatrix}$$ 
$$
z= \begin{bmatrix}
z^x(\alpha, t)
\\
z^y(\alpha,t)
\end{bmatrix}=
 \begin{bmatrix}
\alpha
\\
\frac{(t-\frac{b}{2})}{\xi}\alpha + \frac{b}{2}
\end{bmatrix}$$

$$
N= \begin{bmatrix}
\frac{b}{2}-t
\\
\xi
\end{bmatrix}\frac{1}{\sqrt{\xi^2+(\frac{b}{2}-t)^2}}
$$
$$
\partial_t \begin{bmatrix}
z^x(\alpha, t)
\\
z^y(\alpha,t)
\end{bmatrix}=
 \begin{bmatrix}
0
\\
\frac{\alpha}{\xi}
\end{bmatrix}$$

$$
\partial_\alpha \begin{bmatrix}
z^x(\alpha, t)
\\
z^y(\alpha,t)
\end{bmatrix}=
 \begin{bmatrix}
1
\\
\frac{t-\frac{b}{2}}{\xi}
\end{bmatrix}$$

$$v^\alpha=\alpha \frac{(t-\frac{b}{2})}{\xi^2+(t-\frac{b}{2})^2}$$

\begin{align*}
\frac{\delta z}{\delta t}&=\partial_tz-v^\alpha \partial_\alpha z\\
&=\frac{\alpha}{\xi^2+(t-\frac{b}{2})^2} \begin{bmatrix}
\frac{b}{2}-t
\\
\xi
\end{bmatrix}
\end{align*}

 \begin{equation} \label{on}
 C=\frac{\delta z}{\delta t} \cdot N=
\frac{\alpha}{\xi^2+(t-\frac{b}{2})^2} \begin{bmatrix}
\frac{b}{2}-t
\\
\xi
\end{bmatrix}
\cdot
\begin{bmatrix}
\frac{b}{2}-t
\\
\xi
\end{bmatrix}\frac{1}{\sqrt{\xi^2+(\frac{b}{2}-t)^2}}=\frac{\alpha}{\sqrt{\xi^2+(\frac{b}{2}-t)^2}}
 \end{equation}
 
 $$ \partial_tC
=\frac{\alpha (\frac{b}{2}-t)}{(\xi^2+(\frac{b}{2}-t)^2)^{3/2}}
$$
 $$ \partial_\alpha C=\frac{1}{\sqrt{\xi^2+(t-\frac{b}{2})^2}}.$$

In particular, 
\begin{align*}
\frac{\delta C}{\delta t}&=\partial_tC-v^\alpha \nabla_\alpha C\\
&=\frac{\alpha (\frac{b}{2}-t)}{(\xi^2+(\frac{b}{2}-t)^2)^{3/2}}-\big( \alpha \frac{(t-\frac{b}{2})}{\xi^2+(t-\frac{b}{2})^2}\big) \big( \frac{1}{\sqrt{\xi^2+(t-\frac{b}{2})^2}}\big)\\
&=2\frac{\alpha (\frac{b}{2}-t)}{(\xi^2+(\frac{b}{2}-t)^2)^{3/2}};
\end{align*}
in order to compute $\nabla C$, set $x=\alpha$, $y=\frac{t-\frac{b}{2}}{\xi}x+\frac{b}{2}$:

$$
C=\frac{x^2}{\xi\sqrt{x^2+(y-\frac{b}{2})^2}}
$$

 $$
 \nabla C=
 \begin{bmatrix}
\frac{x^3+2x(y-\frac{b}{2})^2}{\xi(x^2+(y-\frac{b}{2})^2)^{\frac{3}{2}}}
\\
\frac{x^2(\frac{b}{2}-y)}{\xi (x^2+(y-\frac{b}{2})^2)^{\frac{3}{2}}}
\end{bmatrix}
$$

\begin{align*}
\nabla C \cdot N&= \begin{bmatrix}
\frac{x^3+2x(y-\frac{b}{2})^2}{\xi(x^2+(y-\frac{b}{2})^2)^{\frac{3}{2}}}
\\
\frac{x^2(\frac{b}{2}-y)}{\xi (x^2+(y-\frac{b}{2})^2)^{\frac{3}{2}}}
\end{bmatrix} \cdot \begin{bmatrix}
\frac{b}{2}-t
\\
\xi
\end{bmatrix}\frac{1}{\sqrt{\xi^2+(\frac{b}{2}-t)^2}}\\
&= \frac{2(\frac{b}{2}-t)}{\xi^2+(t-\frac{b}{2})^2};
\end{align*}
since $B_\alpha^\alpha$ has a Dirac mass at vertices of non-degenerate sides, by a comparison principle argument $|\nabla u(v_2^*)|=0$: one can compare $u$ with the solution of the corresponding equation in a disk intersecting the vertex $v_2^*$ and containing the polygon; moreover, $C(v_1)=0$. This yields
 \begin{align} \label{a*36}
&\Big(-\int_{l_{1,2}(t)} \frac{\delta C}{\delta t}(\nabla^i u \nabla_i u) dH-
2 \int_{l_{1,2}(t)} C(-\nabla C \cdot N)|\nabla u|^2dH\\ \notag
&+ \int_{l_{1,2}(t)}C^2 B_\alpha^\alpha \nabla^i u \nabla_i u dH \Big)\\ \notag
&=-\frac{2(\frac{b}{2}-t)}{\xi(\xi^2+(\frac{b}{2}-t)^2)}\int_{0}^\xi \alpha |\nabla u|^2 d\alpha+ \frac{4(\frac{b}{2}-t)}{\xi(\xi^2+(t-\frac{b}{2})^2)} \int_0^\xi\alpha|\nabla u|^2 d\alpha\\\notag
&=\frac{2(\frac{b}{2}-t)}{\xi(\xi^2+(t-\frac{b}{2})^2)}\int_0^\xi \alpha |\nabla u|^2 d \alpha
\notag
\end{align} 

{\bf The case: $y_{-}$ (the lower line-segment)} \\ 

$$
z= \begin{bmatrix}
z^x(\alpha, t)
\\
z^y(\alpha,t)
\end{bmatrix}=
 \begin{bmatrix}
\alpha
\\
\frac{(t +\frac{b}{2})}{\xi}\alpha - \frac{b}{2}
\end{bmatrix}$$

$$
N= \begin{bmatrix}
\frac{b}{2}+t
\\
-\xi
\end{bmatrix}\frac{1}{\sqrt{\xi^2+(\frac{b}{2} +t)^2}}
$$

$$
\partial_\alpha \begin{bmatrix}
z^x(\alpha, t)
\\
z^y(\alpha,t)
\end{bmatrix}=
 \begin{bmatrix}
1
\\
\frac{t +\frac{b}{2}}{\xi}
\end{bmatrix}$$

$$
\partial_t \begin{bmatrix}
z^x(\alpha, t)
\\
z^y(\alpha,t)
\end{bmatrix}=
 \begin{bmatrix}
0
\\
\frac{\alpha}{\xi}
\end{bmatrix}$$

$$v^\alpha=\alpha \frac{(t+\frac{b}{2})}{\xi^2+(t+\frac{b}{2})^2}$$

\begin{align*}
\frac{\delta z}{\delta t}&=\partial_tz-v^\alpha \partial_\alpha z\\
&=\frac{-\alpha}{\xi^2+(t+\frac{b}{2})^2} \begin{bmatrix}
\frac{b}{2}+t
\\
-\xi
\end{bmatrix}
\end{align*}

 \begin{equation} \label{zn}
 C=\frac{\delta z}{\delta t} \cdot N=
\frac{-\alpha}{\xi^2+(t+\frac{b}{2})^2} \begin{bmatrix}
\frac{b}{2}+t
\\
-\xi
\end{bmatrix}
\cdot
\begin{bmatrix}
\frac{b}{2}+t
\\
-\xi
\end{bmatrix}\frac{1}{\sqrt{\xi^2+(\frac{b}{2}+t)^2}}=\frac{-\alpha}{\sqrt{\xi^2+(\frac{b}{2}+t)^2}}
\end{equation}
 
 $$ \partial_tC
=\frac{\alpha (\frac{b}{2}+t)}{(\xi^2+(\frac{b}{2}+t)^2)^{3/2}}
$$
 $$ \partial_\alpha C=\frac{-1}{\sqrt{\xi^2+(t+\frac{b}{2})^2}}.$$

In particular, 
\begin{align*}
\frac{\delta C}{\delta t}&=\partial_tC-v^\alpha \nabla_\alpha C\\
&=\frac{\alpha (\frac{b}{2}+t)}{(\xi^2+(\frac{b}{2}+t)^2)^{3/2}}-\big( \alpha \frac{(t+\frac{b}{2})}{\xi^2+(t+\frac{b}{2})^2}\big) \big( \frac{-1}{\sqrt{\xi^2+(t+\frac{b}{2})^2}}\big)\\
&= 2\frac{\alpha (\frac{b}{2}+t)}{(\xi^2+(\frac{b}{2}+t)^2)^{3/2}};
\end{align*}
in order to compute $\nabla C$, set $x=\alpha$, $y=\frac{t+\frac{b}{2}}{\xi}x-\frac{b}{2}$:

$$
C=\frac{-x^2}{\xi\sqrt{x^2+(y+\frac{b}{2})^2}}
$$

 $$
 \nabla C=
 \begin{bmatrix}
-\frac{x^3+2x(y+\frac{b}{2})^2}{\xi(x^2+(y+\frac{b}{2})^2)^{\frac{3}{2}}}
\\
\frac{x^2(\frac{b}{2}+y)}{\xi (x^2+(y+\frac{b}{2})^2)^{\frac{3}{2}}}
\end{bmatrix}
$$

\begin{align*}
\nabla C \cdot N&=\begin{bmatrix}
-\frac{x^3+2x(y+\frac{b}{2})^2}{\xi(x^2+(y+\frac{b}{2})^2)^{\frac{3}{2}}}
\\
\frac{x^2(\frac{b}{2}+y)}{\xi (x^2+(y+\frac{b}{2})^2)^{\frac{3}{2}}}
\end{bmatrix} \cdot
\begin{bmatrix}
\frac{b}{2}+t
\\
-\xi
\end{bmatrix}\frac{1}{\sqrt{\xi^2+(\frac{b}{2}+t)^2}}\\
&=\frac{-2(\frac{b}{2}+t)}{\xi^2+(t+\frac{b}{2})^2};
\end{align*}
therefore as above since $B_\alpha^\alpha$ has a Dirac mass at the vertex, by comparing $u$ with the solution of the corresponding equation in a disk intersecting the vertex $v_2^*$ and containing the polygon, $|\nabla u(v_2^*)|=0$ (moreover, $C(v_3)=0$). This yields
 \begin{align} \label{a*53}
&\Big(-\int_{l_{1,2}(t)} \frac{\delta C}{\delta t}(\nabla^i u \nabla_i u) dH-
2 \int_{l_{1,2}(t)} C(-\nabla C \cdot N)|\nabla u|^2dH \notag\\
&+ \int_{l_{1,2}(t)}C^2 B_\alpha^\alpha \nabla^i u \nabla_i u dH \Big)\\\notag
&=-\frac{2(\frac{b}{2}+t)}{\xi(\xi^2+(\frac{b}{2}+t)^2)}\int_{0}^\xi \alpha |\nabla u|^2 d\alpha+ \frac{4(\frac{b}{2}+t)}{\xi(\xi^2+(\frac{b}{2}+t)^2)} \int_0^\xi\alpha|\nabla u|^2 d\alpha\\\notag
&=\frac{2(\frac{b}{2}+t)}{\xi(\xi^2+(t+\frac{b}{2})^2)}\int_0^\xi \alpha |\nabla u|^2 d \alpha.
\notag
\end{align} 
Hence \eqref{on}, \eqref{a*36}, \eqref{zn},  \& \eqref{a*53} imply

$$
\frac{d \lambda}{dt}=-\int_{S_t} C(N^i \nabla_i u)^2 dS_t=\int_0^\xi \frac{\alpha}{\xi}|\nabla u(\alpha, y_{-})|^2 d\alpha-\int_0^\xi \frac{\alpha}{\xi} |\nabla u(\alpha, y_+)|^2 d\alpha;
$$

$$
\frac{d^2 \lambda}{d t^2}=\frac{2(\frac{b}{2}-t)}{\xi(\xi^2+(t-\frac{b}{2})^2)}\int_0^\xi \alpha |\nabla u(\alpha, y_+)|^2 d \alpha+\frac{2(\frac{b}{2}+t)}{\xi(\xi^2+(t+\frac{b}{2})^2)}\int_0^\xi \alpha |\nabla u(\alpha, y_{-})|^2 d \alpha;
$$
and,
$$
\lambda(P)=\lambda(P^*)+\frac{d \lambda}{d t}\Big|_{t=0}t^*+\frac{d^2 \lambda}{d t^2}\Big|_{t=0}\frac{(t^*)^2}{2}+o((t^*)^2).
$$
Also if
\begin{equation*} \label{ae}
\int_0^\xi \alpha|\nabla u(\alpha, y_{-})|^2 d\alpha \ge \int_0^\xi \alpha |\nabla u(\alpha, y_+)|^2 d\alpha,
\end{equation*}
\begin{align*}
&\frac{d \lambda}{dt}\Big|_{t=0}\ge 0;
\end{align*}
next, if
\begin{equation*} \label{ae}
\int_0^\xi \alpha|\nabla u(\alpha, y_{-})|^2 d\alpha < \int_0^\xi \alpha |\nabla u(\alpha, y_+)|^2 d\alpha
\end{equation*}
set $P^{2}=P^*$, $P^3=(P^{*})^{*}$, where the vertices $v_2^*, v_3, v_4$ are taken from $P^*$ ($v_4$ is the next clockwise vertex relative to $v_3$) \& $P^3$ is obtained via the triangle symmetrization relative to $P^*$ as the initial n-gon (Figure \ref{a5w}). 

\begin{figure}[htbp] 
\centering
\includegraphics[width=.82 \textwidth]{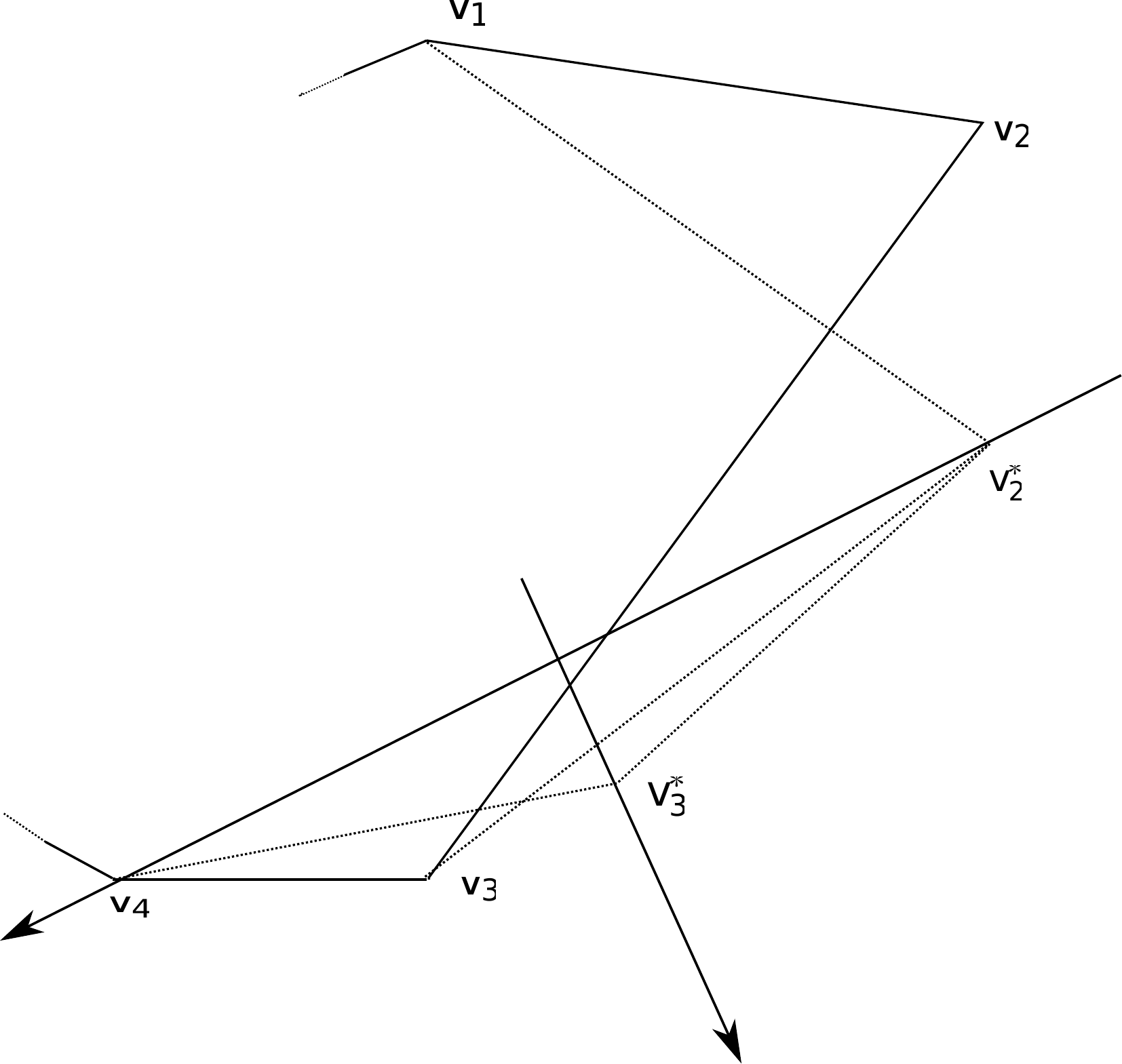}
\caption{}
\label{a5w}
\end{figure}

\noindent Define $P^{k+1}:=(P^{k})^{*}$, $t_{k}=|v_{k+1}^*-v_{k+1}|$ with $P^1=P$ and for constants $a_n$ (specified in the latter), assuming $w_i$ are the vertices of $P_n,$

\begin{align}
\label{o2} &A_{n}:=\\ \notag
&\Big\{\text{vertices}(P) \in \bigotimes_{i=1}^{n} B_{a_n}(w_i): \sum_{j \ge k} t_j^2\le \alpha t_{k-1}^2 \\
& \text{ whenever $t_{k-1}>0$}, P^\infty \neq P^i \text{ if $i \in \mathbb{N}$}, P^\infty = P_n\Big\}\notag
\end{align}
(in the general case, $P^\infty$ is equilateral, not necessarily cyclical). Now set 
\begin{equation} \label{la}
\phi(t)=\sqrt{(\frac{b}{2}-t)^2+\xi^2}+\sqrt{(\frac{b}{2}+t)^2+\xi^2}-2\sqrt{(\frac{b}{2})^2+\xi^2}
\end{equation}
so that 

$$
\phi'(0)=0,
$$

\begin{align*}
\phi''(t)&=\frac{1}{\sqrt{(\frac{b}{2}-t)^2+\xi^2}}+\frac{1}{\sqrt{(\frac{b}{2}+t)^2+\xi^2}}-\frac{(\frac{b}{2}-t)^2}{((\frac{b}{2}-t)^2+\xi^2)^{3/2}}\\
&-\frac{(\frac{b}{2}+t)^2}{((\frac{b}{2}+t)^2+\xi^2)^{3/2}}\\
& \approx \frac{1}{\sqrt{2}}\xi^{\frac{1}{2}}
\end{align*}
when $t \approx 0$; 
the above yields
\begin{align}
\label{ai@}
L(P^k)-L(P^{k+1}) &= \sqrt{(\frac{b}{2}-t_{k})^2+\xi^2}+\sqrt{(\frac{b}{2}+t_k)^2+\xi^2}-2\sqrt{(\frac{b}{2})^2+\xi^2} \\ \notag
&\approx \frac{1}{2\sqrt{2}}\xi^{\frac{1}{2}} t_k^2.                     
\end{align}
Observe also that

$$
\xi(n)=1-\cos(2\pi/n) \approx \xi_k=\xi 
$$
$$
b(n)=2\sin(2\pi/n) \approx b_k=b;
$$
In particular, supposing $t_1, t_2, \ldots, t_k >0$, stability implies \cite{MR3327086, MR3487241, MR3550852}

$$
\alpha(n) |R_k(P^k) \Delta P_n|^2 \le \delta(P^k)
$$
 with $\inf_n \alpha(n)>0$ and $R_k$ a rigid motion;  whence  supposing $t_1, t_2, \ldots, t_k >0$

\begin{align}
\label{ozm}
\alpha(n) |R_k(P^k) \Delta P_n|^2 \le \delta(P^k)&=L^2(P^k)-L^2(P_n)\\ \notag
&=\Big(L(P^k)+L(P_n)\Big) \Big(L(P^k)-L(P_n) \Big)\\ \notag
&=\Big(L(P^k)+L(P_n)\Big) \Big(\sum_{j=k}^\infty (L(P^j)-L(P^{j +1})) \Big)\\ \notag
&\le \bar a \xi^{\frac{1}{2}}(n) \Big(L(P^k)+L(P_n)\Big)  \sum_{j \ge k} t_j^2 \\ \notag
&\le   \tilde{a}\xi^{\frac{1}{2}}(n) \sum_{j \ge k} t_j^2\\ \notag
&\le r_a t_{k-1}^2, \notag
\end{align}
where $r_a>0$ is uniform in $k$ \& $n$ if $n$ is large. 
In particular, because 
$$\inf_n \alpha(n)>0,$$ when assuming $t_1, t_2, \ldots, t_k >0$,

$$|R_k(P^k) \Delta P^\infty| \le r t_{k-1}.$$
In general, for $k \in \mathbb{N}$ s.t.  $t_k \neq 0$, either: (a) $t_j= 0$ for $j \ge k+1$; (b)  $t_{k+1}\neq 0$; (c) $t_{k+l}\neq 0$ for an $l \ge 2$ and $t_{e}=0$ for $e \in \{k+1, \ldots, k+l-1 \}$ and in that context $s_{k+1}=\ldots=s_{k+l-1}=s_{k+l}   \neq s_{k+l+1}$ as in Figure \ref{3ai}; also, if $t_k=0$ for all $k \in \mathbb{N}$, $P=P^\infty$. Therefore, letting $t_{j_1}, t_{j_2}, \ldots, t_{j_k}, \ldots$ be the non-zero $t_k$, assuming $\{t_{j_k}\}$ is an infinite sequence (assuming the sequence is finite implies the convergence in finitely many iterations),

\begin{equation} \label{t_{i-1}}
|R_{j_k}(P^{j_k}) \Delta P_n| \le r t_{j_{k-1}}.
\end{equation}

\begin{figure}[htbp] 
\centering
\includegraphics[width=.76 \textwidth]{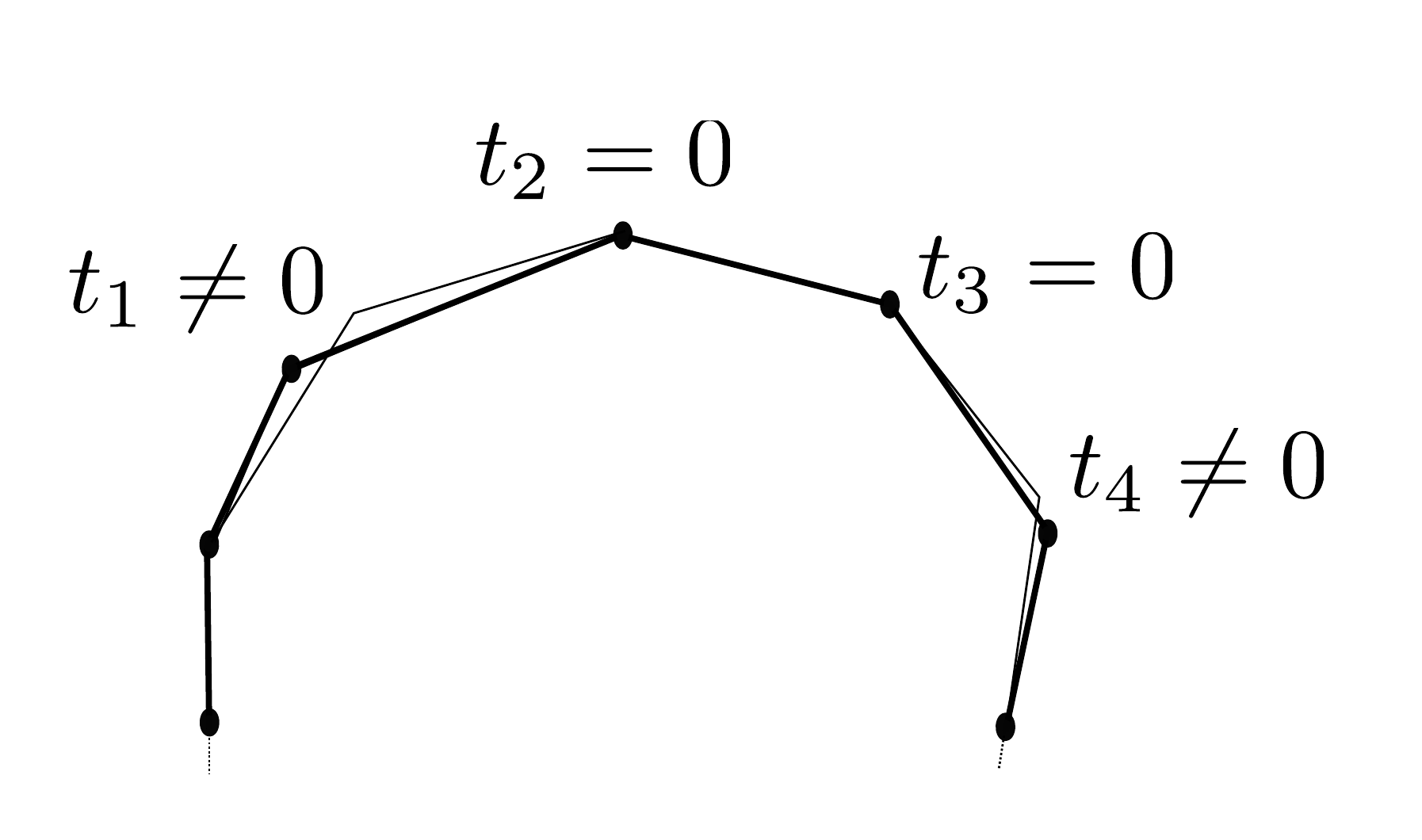}
\caption{}
\label{3ai}
\end{figure}
Moreover,

\begin{align*}
\lambda(P)&=\lambda(P^2)+\frac{d \lambda(P^2)}{d t}\Big|_{t=0}t_1+\frac{d^2 \lambda(P^2)}{d t^2}\Big|_{t=0}\frac{t_1^2}{2}+o(t_1^2)\\
&\ge \lambda(P^3)+\frac{d \lambda(P^3)}{dt}\Big|_{t=0}t_2+\frac{d^2 \lambda(P^3)}{dt^2}\Big|_{t=0}\frac{t_2^2}{7}+\frac{d \lambda(P^2)}{dt}\Big|_{t=0}t_1+\frac{d^2 \lambda(P^2)}{dt^2}\Big|_{t=0}\frac{t_1^2}{7}\\
&\ge \lambda(P^\infty)+\sum_{k=2}^\infty  \frac{d \lambda(P^{k})}{dt}\Big|_{t=0}t_{k-1}+\sum_{k=2}^\infty  \frac{d^2 \lambda(P^k)}{dt^2}\Big|_{t=0}\frac{t_{k-1}^2}{7},
\end{align*} 
since the second derivative for $n$ fixed is bounded from below uniformly in $k$ when $P \approx P_n$ via 

$$
\xi(n)=1-\cos(2\pi/n) \approx \xi_k, 
$$

$$
b(n)=2\sin(2\pi/n) \approx b_k,
$$
\eqref{lower6}, where $\xi_k, b_k$ correspond to the fixed n-gon $P^{k}$ \& $t_{k-1}$ is small: if $t_1=t$ is small, one may absorb $o(t_{k-1}^2)$ in the quadratic up to a constant 

$$
t_{k-1}^2 \le t_1^2+\sum_{j \ge 2} t_j^2 \le  (1+\alpha)t_1^2.
$$
Next, since the (simple) radial derivative of the eigenfunction of $B_1$ is

$$
\frac{d u_S}{dr}\Big|_{r=1}=\frac{-p}{\sqrt{\pi}},
$$
with $J_m(r)$ the Bessel function of order $m$, $p$ the first root of $J_0$; 
define 

$$l_{i}:=\{(x,y_+(x)), \frac{\xi}{5}\le x \le \frac{\xi}{4}\}.$$ 
\begin{figure}[htbp] 
\centering
\includegraphics[width=.73 \textwidth]{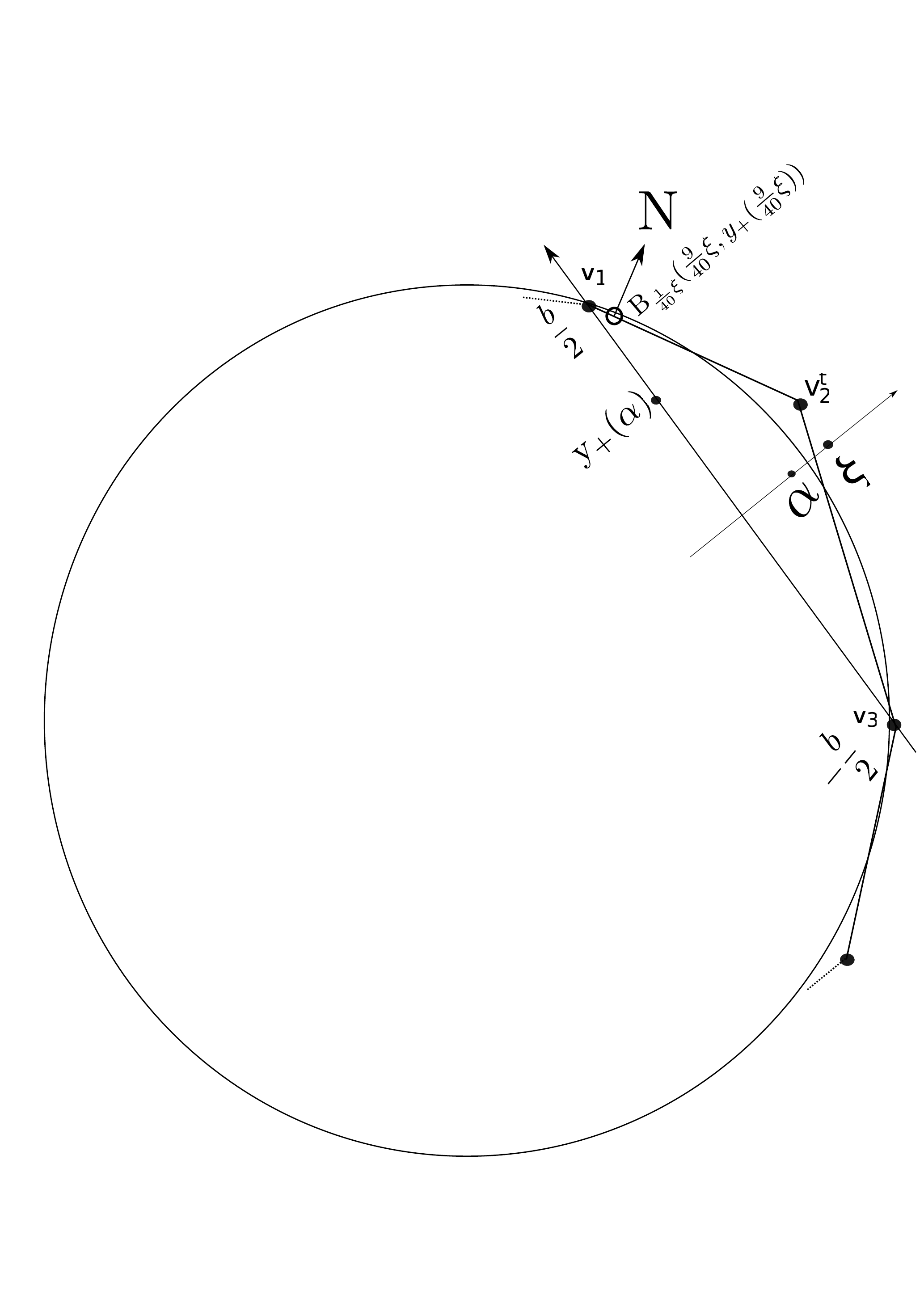}
\caption{}
\label{5ol}
\end{figure}
The eigenfunctions are analytic away from the vertices, therefore reflecting $u_n$, the eigenfunction of $P$ ($P \approx P_n$, thus one may consider any $P^k$),  
$$\Delta u_n=-\lambda_n u_n \hskip .1in \text{in } B_{j\xi}((9/40)\xi, y_{+}((9/40)\xi)),$$
where $j=\frac{1}{40}$ (Figure \ref{5ol});
if $\phi_n:=u_n-u_S$, where $u_S$ is the eigenfunction of $B_1$, 

$$
\Delta \phi_n = \lambda u_S -\lambda_n u_n \hskip .1in \text{in } B_{j\xi}((9/40)\xi, y_{+}((9/40)\xi));
$$
therefore, consider the Newtonian potential of $ \lambda u_S -\lambda_n u_n$: 

$$w_n:=N * ( \lambda u_S -\lambda_n u_n),$$ then 
$$
\Delta w_n=\lambda u_S -\lambda_n u_n \hskip .1in \text{in } B_{j\xi}((9/40)\xi, y_{+}((9/40)\xi))
$$
and hence $q_n=\phi_n-w_n$ is harmonic in $B_{j\xi}((9/40)\xi, y_{+}((9/40)\xi))$. Observe that \cite[Lemma 4.1]{gilbarg:01} implies 

\begin{equation} \label{z1}
\sup_{B_{j\xi}((9/40)\xi, y_{+}((9/40)\xi))} |D w_n| \rightarrow 0,
\end{equation}
as $n \rightarrow \infty$, because $\xi \rightarrow 0,$
$$\lambda_n \rightarrow \lambda$$
due to Chenais' theorem \cite[Theorem 2.3.18]{MR2251558}, $u_S|_{\partial B_1}=u_n|_{\partial P}=0$, and $$\lambda u_S -\lambda_n u_n \rightarrow 0.$$ 
Hence one obtains that the normal derivative  

$$\partial_N w_n((9/40)\xi, y_{+}((9/40)\xi)) \rightarrow 0$$ 
as $n\rightarrow \infty$. If $aN$ is a dilation along the outer normal relative to $l_{i}$ at $((9/40)\xi, y_{+}((9/40)\xi))$, $a<0$, $|a|$ small, then $aN \in P$ and as $P \rightarrow B_1$, if $n$ is large, up to a translation of $B_1$, $aN \in B_1$. Hence, assuming $\bar a>0$ is small, $q_n$ is well-defined and harmonic in a rectangle (Figure \ref{9z})
\begin{figure}[htbp] 
\centering
\includegraphics[width=.89 \textwidth]{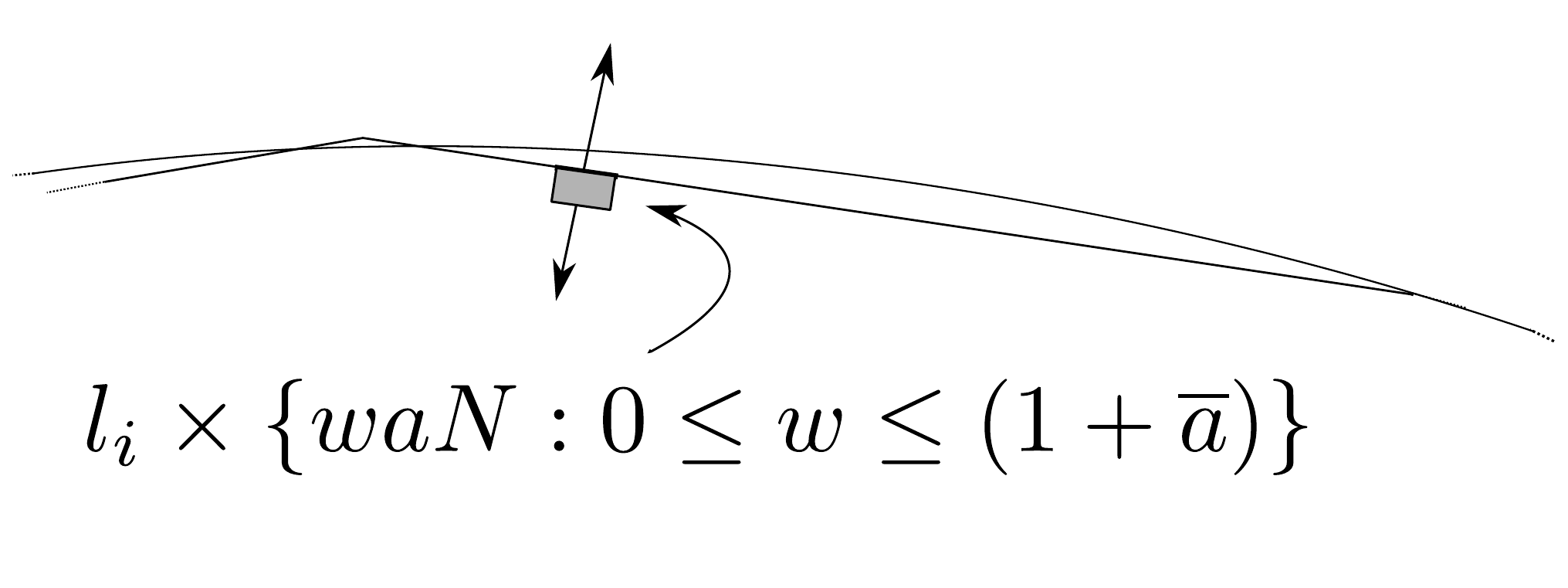}
\caption{}
\label{9z}
\end{figure}
$$l_{i} \cap B_{j\xi}((9/40)\xi, y_{+}((9/40)\xi)) \times \{ waN: 0\le w \le (1+\bar a)\} \subset P_n.$$ 
Since $\lambda_n \rightarrow \lambda$, $aN \in B_1 \cap P_n$, 

$$(\lambda u_S -\lambda_n u_n)(aN) \rightarrow 0.$$ Also, $u_n|_{\partial P}=0$ implies

$$(\lambda u_S -\lambda_n u_n)((9/40)\xi, y_{+}((9/40)\xi))=\lambda u_S ((9/40)\xi, y_{+}((9/40)\xi)) \rightarrow 0$$
because $u_S|_{\partial B_1}=0$; thus it follows that 
$$
q_n(((9/40)\xi, y_{+}((9/40)\xi))+aN)\rightarrow 0
$$
$$
q_n(((9/40)\xi, y_{+}((9/40)\xi)) \rightarrow 0;
$$
furthermore,
 \begin{align}\notag
 &\int_{0}^{|a|}|\partial_N q_n(((9/40)\xi, y_{+}((9/40)\xi))-wN)|^2 dw \label{q2o}\\ \notag
 &=q_n(((9/40)\xi, y_{+}((9/40)\xi))+aN) \partial_N q_n(((9/40)\xi, y_{+}((9/40)\xi))+aN)\\\notag
 &-q_n((9/40)\xi, y_{+}((9/40)\xi)) \partial_N q_n((9/40)\xi, y_{+}((9/40)\xi))\\\notag
 &-\int_0^{|a|} \partial_{NN}q_n(((9/40)\xi, y_{+}((9/40)\xi))-wN) q_n(((9/40)\xi, y_{+}((9/40)\xi))-wN) dw\\
 \end{align}
 and the $W^{2, p}$ estimates (the Calderon-Zygmund theory is efficiently stated in \cite{MR2999297})

\begin{align*}
&||D^2 q_n||_{L^p(B_{a_*r}(((9/40)\xi, y_{+}((9/40)\xi))-wN)} \le C_{2,p}\big(||\phi_n-w_n||_{L^1(B_{ar}(((9/40)\xi, y_{+}((9/40)\xi))-wN)}\big)\\
&=C_{2,p}\big(||(u_n-u_S)-N * ( \lambda u_S -\lambda_n u_n)||_{L^1(B_{ar}(((9/40)\xi, y_{+}((9/40)\xi))-wN)}\big),
\end{align*}
with $a_*<a$ sufficiently small imply 

\begin{equation}  \label{eal}
 |\partial_N q_n(((9/40)\xi, y_{+}((9/40)\xi))+aN)| + |\partial_N q_n((9/40)\xi, y_{+}((9/40)\xi))|\le \alpha:
\end{equation}
to simplify the notation, let $N=e_1$, then via Taylor's theorem
$$q_n'(x)=\frac{1}{2h} [q_n(x+2h)-q_n(x)]-h q_n''(\theta)$$
with $\theta \in (x,x+2h)$. This then implies

\begin{equation*} \label{ea6}
|q_n'(x)| \le h ||q_n''||_{L^\infty}+\frac{||q_n||_{L^\infty}}{h};
\end{equation*}
therefore optimizing in $h>0$ implies

\begin{equation} \label{p4m}
||q_n'||_{L^\infty} \le 2 \sqrt{||q_n''||_{L^\infty} ||q_n||_{L^\infty}}.
\end{equation}
In particular, to obtain \eqref{eal}, it is sufficient to obtain an $L^\infty$ bound on $ \partial_{NN} q_n$ close to $((9/40)\xi, y_{+}((9/40)\xi))+aN$ \& $((9/40)\xi, y_{+}((9/40)\xi))$. This follows from  $W^{2, p}$ and BMO estimates and the Lebesgue differentiation theorem (the argument is below). 
Note

\begin{align*}
&\int_0^{|a|} \partial_{NN}q_n(((9/40)\xi, y_{+}((9/40)\xi))-wN) q_n(((9/40)\xi, y_{+}((9/40)\xi))-wN) dw \\
&\le \Big(\int_0^{|a|} | \partial_{NN}q_n(((9/40)\xi, y_{+}((9/40)\xi))-wN)|^2 dw\Big)^{\frac{1}{2}}\times \\
&\Big(\int_0^{|a|} |q_n(((9/40)\xi, y_{+}((9/40)\xi))-wN)|^2 dw\Big)^{\frac{1}{2}} \rightarrow 0.
\end{align*}
One therefore obtains from \eqref{q2o} the convergence 

\begin{equation} \label{oy2}
\int_{0}^{|a|}|\partial_N q_n(((9/40)\xi, y_{+}((9/40)\xi))-wN)|^2 dw \rightarrow 0.
\end{equation}
Suppose 

\begin{align}
 &\limsup_{n \rightarrow \infty} |\partial_N q_n((9/40)\xi, y_{+}((9/40)\xi))|=\lim_{l \rightarrow \infty} |\partial_N q_{n_l}((9/40)\xi, y_{+}((9/40)\xi))| \label{h}\\\notag
 &>0.
\end{align}
Hence, \eqref{oy2} with an application of Fatou's lemma implies

$$\int_{0}^{|a|}\lim_{l\rightarrow \infty} |\partial_N q_{n_l}(((9/40)\xi, y_{+}((9/40)\xi))-wN)|^2 dw =0$$
and possibly up to another subsequence

$$
\lim_{l\rightarrow \infty} \partial_N q_{n_l}(((9/40)\xi, y_{+}((9/40)\xi))-wN)
$$
 is continuous at $w=0$ thanks to the $W^{2,p}$ estimates: for $p$ large, $W^{2,p}$ is compactly embedded in $C^{1,\alpha}$; next, since

$$
\lim_{l\rightarrow \infty} |\partial_N q_{n_l}(((9/40)\xi, y_{+}((9/40)\xi))-wN)|=0
$$
a.e. in $[0,|a|]$, 
\begin{align*}
\lim_{l \rightarrow \infty}& |\partial_N q_{n_l}((9/40)\xi, y_{+}((9/40)\xi))|\\
&=\lim_{w\rightarrow 0} \lim_{l\rightarrow \infty} |\partial_N q_{n_l}(((9/40)\xi, y_{+}((9/40)\xi))-wN)=0, 
\end{align*}
a contradiction with \eqref{h}. Therefore,

\begin{align*}
 \liminf_{n \rightarrow \infty} |\partial_N q_n((9/40)\xi, y_{+}((9/40)\xi))| &=\limsup_{n \rightarrow \infty} |\partial_N q_n((9/40)\xi, y_{+}((9/40)\xi))|\\
 &= \lim_{n \rightarrow \infty} \partial_N q_n((9/40)\xi, y_{+}((9/40)\xi))=0.
\end{align*}
In particular, \eqref{z1} and $q_n=\phi_n-w_n$ imply

$$
\partial_{N} \phi_n((9/40)\xi, y_{+}((9/40)\xi)) \rightarrow 0
$$
as $n \rightarrow \infty$; hence, this yields

$$
\Big|\partial_{N} \Big(u_n((9/40)\xi, y_{+}((9/40)\xi))-\frac{-p}{\sqrt{\pi}}\Big)\Big| \rightarrow 0
$$
$\&$ there exists $\hat N \in \mathbb{N}$ such that for all $n \ge \hat N$, 

\begin{equation} \label{par}
\partial_{-N} u_n((9/40)\xi, y_{+}((9/40)\xi)) \ge \frac{1}{6}\frac{p}{\sqrt{\pi}};
\end{equation}
also,
$W^{2,p}$ \& BMO estimates imply when $1<p<\infty$, 

$$h_{\xi} \in \{(\xi_j, y_{+}(\xi_j)): \xi_j \in [\frac{\xi}{5}, \frac{\xi}{4}]\},$$ 

$$
||D^2 u_n||_{L^p(B_{a_*r}(h_\xi))} \le C_{2,p}\big(||\lambda_n u_n||_{L^p(B_{ar}(h_\xi))}+||u_n||_{L^1(B_{ar}(h_\xi))}\big),
$$

\begin{align*}
\sup_{x \in B_{a_*\xi}(h_\xi), r>0} &\frac{1}{r^2} \int_{B_r(x) \cap B_{a_*\xi}(h_\xi)} |D^2 u_n(y)-(D^2 u_n)_{r,x}|^2 dy+||D^2 u_n||_{L^2(B_{a_*\xi}(h_\xi))}^2 \\
&\le C_{\infty, 2} \big( ||\lambda_n u_n||_{L^\infty(B_{a\xi}(h_\xi))}+||u_n||_{L^1(B_{a_*\xi}(h_\xi))} \big)
\end{align*}
where $a_*<a\le 1/8$, therefore 

\begin{align*}
&|D^2 u_n(h_\xi)|^2 \le t(a_*,a)\limsup_{r \rightarrow 0} \Big [\frac{1}{r^2} \int_{B_r(h_\xi)} |D^2 u_n(y)-(D^2 u_n)_{r,h_\xi}|^2 dy \\
&+\frac{1}{r^2} \Big(C_{2,2}\big(||\lambda_n u_n||_{L^2(B_{ar}(h_\xi))}+||u_n||_{L^1(B_{ar}(h_\xi))}\big)\Big)^2\Big]\\
&\le t(a_*,a)\Big[C_{\infty, 2} \big( ||\lambda_n u_n||_{L^\infty(B_{a\xi}(h_\xi))}+||u_n||_{L^1(B_{a_*\xi}(h_\xi))} \big)\\
&+\limsup_{r \rightarrow 0}\frac{1}{r^2} \Big(C_{2,2}\big(||\lambda_n u_n||_{L^2(B_{ar}(h_\xi))}+||u_n||_{L^1(B_{ar}(h_\xi))}\big)\Big)^2\Big]\\&\le A(a_*,a, ||\lambda_n u_n||_{L^\infty}, C_{\infty, 2})\\
\end{align*}
thanks to Lebesgue's differentiation theorem because $h_\xi$ is a point of continuity of $D^2 u_n$ and thus a Lebesgue point (in particular, a similar argument  implies \eqref{eal} in the context of \eqref{p4m}); since the $L^\infty$ norms are bounded as $n\rightarrow \infty$, it thus follows that 

\begin{equation} \label{zqm5}
|\partial_{-N} (u_n((9/40)\xi, y_{+}((9/40)\xi))-u_n(h_\xi))| \le \sqrt{A(a_*,a, ||\lambda_n u_n||_{L^\infty}, 2)}\xi;
\end{equation}
hence as
$$|\nabla u_n|=-N \cdot \nabla u_n$$
on $\partial P\setminus{\text{vertices}},$ and since $\xi \rightarrow 0$ as $n \rightarrow \infty$, \eqref{par} and \eqref{zqm5} imply
that there exists $\alpha_2>0$ such that
\begin{align}
 \frac{d^2 \lambda}{d t^2}&=\frac{2(\frac{b}{2}-t)}{\xi(\xi^2+(t-\frac{b}{2})^2)}\int_0^\xi \alpha |\nabla u_n(\alpha, y_+)|^2 d \alpha \notag\\ 
 &+\frac{2(\frac{b}{2}+t)}{\xi(\xi^2+(t+\frac{b}{2})^2)}\int_0^\xi \alpha |\nabla u_n(\alpha, y_{-})|^2 d \alpha \notag\\ \notag
 & \ge\frac{2(\frac{b}{2}-t)}{\xi(\xi^2+(t-\frac{b}{2})^2)} \alpha_2 \int_{\frac{\xi}{5}}^{\frac{\xi}{4}} \alpha d \alpha +\frac{2(\frac{b}{2}+t)}{\xi(\xi^2+(t+\frac{b}{2})^2)}\alpha_2 \int_{\frac{\xi}{5}}^{\frac{\xi}{4}}\alpha d \alpha\\
& \ge \Big(\frac{(\frac{b}{2}-t)}{(\xi^2+(t-\frac{b}{2})^2)}+\frac{(\frac{b}{2}+t)}{(\xi^2+(t+\frac{b}{2})^2)}\Big)\alpha_2 \frac{9\xi}{400}. \label{lower6}
\end{align}
Moreover, observe

$$
\frac{d \lambda}{dt}=\int_0^\xi \frac{\alpha}{\xi}|\nabla u_{i+1,n}(\alpha, y_{-})|^2 d\alpha-\int_0^\xi \frac{\alpha}{\xi} |\nabla u_{i+1,n}(\alpha, y_+)|^2 d\alpha \rightarrow 0,
$$
$u_{i+1,n}$ is the eigenfunction corresponding to $P^{i+1}$. The eigenfunctions around the vertex of a polygon with opening $\pi / \alpha$ have the form
$$
\alpha_1r^{\alpha}\sin(\alpha \phi)+O(r^{2\alpha})+O(r^{2+\alpha}),
$$ 
$\phi \in [0,\frac{\pi}{\alpha}]$ in the polar coordinates relative to the cone of the vertex (one obtains that by expanding the eigenfunction in terms of Bessel functions); the opening of the vertex in the n-gon associated with the integral 
has angle $$\frac{\pi}{\alpha_{i+1,n}} \approx \frac{\pi}{\alpha_n}:=\frac{\pi}{(n/(n-2))};$$
hence $\nabla u_{i+1,n}$ has a H\"older modulus, nevertheless the cusp singularity appears in both integrals in the expression for $\frac{d \lambda}{dt}$ and is cancelled thanks to the symmetry: $|\nabla^+ r^\alpha \sin(\alpha \phi)|=|\nabla^-r^\alpha \sin(\alpha \phi)|$, $\nabla^{\pm}$ is the $y_{\pm}$ gradient. 
If $t_i>0$, note $\eqref{t_{i-1}}$ then yields 

$$  
\Big|\frac{\pi}{\alpha_n}-\frac{\pi}{\alpha_{i+1,n}}\Big| \le \tilde a t_{i};
$$
supposing $u$ is the eigenfunction of $P_n$, symmetry leads to 
\begin{align*}
\Bigg |\int_{a_2\xi}^{\xi} &\frac{x}{\xi}|\nabla u_{i+1,n}(x,y_+(x))|^2dx-\int_{a_2 \xi}^\xi \frac{x}{\xi}|\nabla u_{i+1,n}(x,y_-(x))|^2dx\Bigg | \\
&=\Bigg |\Big(\int_{a_2\xi}^{\xi} \frac{x}{\xi}|\nabla u_{i+1,n}(x,y_+(x))|^2dx-\int_{a_2 \xi}^\xi \frac{x}{\xi}|\nabla u_{i+1,n}(x,y_-(x))|^2dx\Big) \\
&- \Big(\int_{a_2 \xi}^\xi \frac{x}{\xi}|\nabla u(x,y_{+,a}(x))|^2dx-\int_{a_2 \xi}^\xi \frac{x}{\xi}|\nabla u(x,y_{-,a}(x))|^2dx \Big) \Bigg |\\
& \le \tilde a_l t_{i}, \\
\end{align*}
where $y_{\pm,a}$ is associated with $P_n$ (note that since the cones are $\tilde a t_{i}$ away, the same $a_2$ can be utilized and the same coordinate system around the vertex). Therefore, there exist $a_1\approx 0,$ $a_2 \approx 1$, $a_i \in (0,1)$ s.t. 

\begin{align}
\Bigg |\int_{a_2\xi}^{\xi} &\frac{x}{\xi}|\nabla u_{i+1,n}(x,y_+(x))|^2dx-\int_{a_2 \xi}^\xi \frac{x}{\xi}|\nabla u_{i+1,n}(x,y_-(x))|^2dx \notag\\ \notag
&+\int_{0}^{a_1\xi} \frac{x}{\xi}|\nabla u_{i+1,n}(x,y_+(x))|^2dx-\int_{0}^{a_1\xi} \frac{x}{\xi}|\nabla u_{i+1,n}(x,y_-(x))|^2dx\Bigg |\\ \label{ij1}
& \le \tilde \alpha_l t_i\Big(\int_{0}^{a_1\xi}\frac{x}{\xi}dx+\int_{a_2 \xi}^{\xi}\frac{x}{\xi}dx\Big). 
\end{align}
\begin{figure}[htbp] 
\centering
\includegraphics[width=.67 \textwidth]{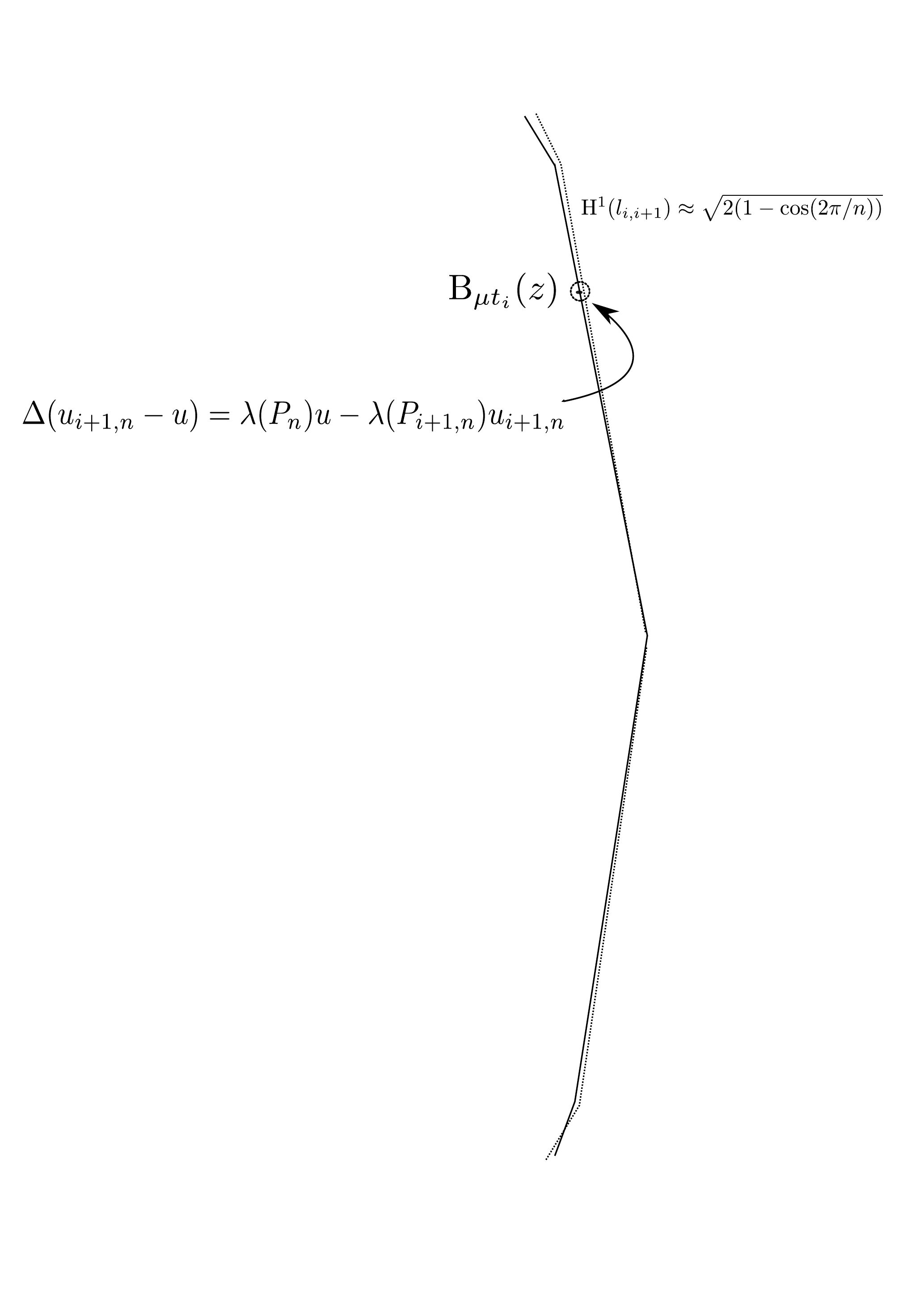}
\caption{}
\label{ot2}
\end{figure}
Note that away from the vertex, the eigenfunctions are $C^\infty$ up to the boundary and therefore can be reflected and extended smoothly in a neighborhood. If $l_{i,i+1}$ is the segment corresponding to $P^{i+1}=P_{i+1,n}$, $z \in l_{i,i+1}$, $u_{i+1,n}$ is a solution of 
$$
\Delta u_{i+1,n}=-\lambda(P_{i+1,n})u_{i+1,n}
$$ 
on $B_{\mu t_i}(z)$ where $\mu>0$ is small  \& $u_{i+1,n}$ vanishes on $l_{i,i+1}$ (Figure \ref{ot2}); this yields

$$
\Delta (u_{i+1,n}-u)=\lambda(P_n)u-\lambda(P_{i+1,n})u_{i+1,n}
$$ 
on $B_{\mu t_i}(z)$ because 
$$
\Big|\frac{\pi}{\alpha_n}-\frac{\pi}{\alpha_{i+1,n}}\Big| \le \tilde a t_i,
$$ 

$$\mathcal{H}^1(l_{i,i+1}) \approx \sqrt{2(1-\cos(2\pi/n))},$$
$t_1$ is small,  \& $t_i$ has a geometric upper bound (cf. $A_n$); \& if $x, x_a \in  B_{\frac{\mu}{2}\mu t_i}(z)$
\begin{align}
|\nabla u_{i+1,n}(x_a)-\nabla u(x)| &\le |\nabla u_{i+1,n}(x_a)-\nabla u_{i+1,n}(x)|+|\nabla u_{i+1,n}(x)-\nabla u(x)| \label{zyq}\\\notag
&\le at_i+ |\nabla u_{i+1,n}(x)-\nabla u(x)|;
\end{align}
the $C^{2,\alpha}$ boundary estimates  and the $t_i$ closeness of $P_n$ and $P_{i+1,n}$ assuming $t_i>0$ imply 

\begin{align}
 ||\nabla(u_{i+1,n}-u)||_{L^\infty(B_{\frac{\mu}{2} t_i}(z))} &\le  q\Big(||u-u_{i+1,n}||_{L^\infty(B_{\mu t_i}(z))}\notag \\ \label{ine9s}
& \hskip .8in+ ||(\lambda(P_n)u-\lambda(P_{i+1,n})u_{i+1,n})||_{L^\infty(B_{\mu t_i}(z))}\Big);
\end{align}
assume that $l_n$ denotes the edge of $P_n$, $z_n \in l_n \cap  B_{\mu t_i}(z)$,   $x \in  B_{\mu t_i}(z)$; observe the boundary data implies $u(z_n)=u_{i+1,n}(z)=0$, 
\begin{equation} \label{l3v}
|(u-u_{i+1,n})(x)|=|u(x)-u(z_n)+(u_{i+1,n}(z)-u_{i+1,n}(x))| \le q_at_i.
\end{equation}
In addition, 

\begin{align} \notag
|(\lambda(P_n)u&-\lambda(P_{i+1,n})u_{i+1,n})(x)| \label{m2e}\\ \notag
&\le |\lambda(P_n)u(x)-\lambda(P_n)u(z_n)|\notag\\
&+|\lambda(P_{i+1,n})u_{i+1,n}(z)-\lambda(P_{i+1,n})u_{i+1,n}(x)| \notag\\
&\le at_i.
\end{align}
Hence, \eqref{zyq}, \eqref{ine9s}, \eqref{l3v}, and \eqref{m2e} yield

\begin{align}
& \Big|\int_{a_1\xi}^{a_2\xi} \frac{x}{\xi}|\nabla u_{i+1,n}(x,y_+(x))|^2dx-\int_{a_1 \xi}^{a_2\xi} \frac{x}{\xi}|\nabla u_{i+1,n}(x,y_-(x))|^2dx\Big|\label{e4}\\ \notag
& \le \alpha_w \int_{a_1\xi}^{a_2\xi} \frac{x}{\xi} \Big ||\nabla u_{i+1,n}(x,y_+(x))|-|\nabla u_{i+1,n}(x,y_-(x))| \Big| dx\\ \notag
& \le \alpha_w \Big(\int_{a_1\xi}^{a_2\xi} \frac{x}{\xi} \Big ||\nabla u_{i+1,n}(x,y_+(x))|-|\nabla u(x,y_{+,a}(x))| \Big|dx+\\ \notag
& \int_{a_1\xi}^{a_2\xi}\frac{x}{\xi}\Big ||\nabla u_{i+1,n}(x,y_-(x))|-|\nabla u(x,y_{-,a}(x))| \Big |dx\Big)\\ \notag
& \le \alpha_* a_* t_i  \int_{a_1\xi}^{a_2\xi} \frac{x}{\xi}dx\le \alpha_* \bar a_1 t_i \xi 
\end{align}
(by symmetry, $|\nabla u(x,y_{+,a}(x))|=|\nabla u(x,y_{-,a}(x))|$). Therefore, it follows that with $n$ large \eqref{ij1} and \eqref{e4} imply

$$
\Big|\frac{d \lambda(P^{k})}{dt}\Big| \le   \alpha \xi t_{k-1}
$$
where $\alpha>0$ only depends on the $W^{2,p}$ bounds and  universal constants and thus supposing $t_{k-1}>0$

\begin{align*}
 \frac{d \lambda(P^{k})}{dt} &\Big |_{t=0}t_{k-1}+ \frac{d^2 \lambda(P^k)}{dt^2}\Big|_{t=0}\frac{t_{k-1}^2}{7} \\
 &\ge \Big(\frac{1}{7}\Big(\frac{(\frac{b}{2}-t)}{(\xi^2+(t-\frac{b}{2})^2)}+\frac{(\frac{b}{2}+t)}{(\xi^2+(t+\frac{b}{2})^2)}\Big)\alpha_2 \frac{9}{400}-\alpha \Big) \xi t_{k-1}^2 \\
 &> 0
\end{align*}
via \eqref{lower6}, 

$$
\xi(n)=1-\cos(2\pi/n) \approx \xi_k,
$$

$$
 b(n)=2\sin(2\pi/n) \approx b_k,
$$

$$
\xi(n)^2+\Big(\frac{b(n)}{2}\Big)^2 = 2(1-\cos(2\pi/n))=2\xi(n),
$$

$$
\frac{\frac{b}{2}}{(\xi^2+(\frac{b}{2})^2)} \approx \frac{\sin(2\pi/n)}{2(1-\cos(2\pi/n))} \rightarrow \infty.
$$
Therefore

\begin{align*}
\lambda(P)&\ge \lambda(P^\infty)+\sum_{\{k \ge 2: t_{k-1} \neq 0 \}} \frac{d \lambda(P^{k})}{dt}\Big|_{t=0}t_{k-1}+\sum_{\{k \ge 2: t_{k-1} \neq 0 \}}  \frac{d^2 \lambda(P^k)}{dt^2}\Big|_{t=0}\frac{t_{k-1}^2}{7}\\
&\ge \lambda(P_\infty)=\lambda(P_n).
\end{align*} 
Thus the above implies

$$A_n \subset$$

$$ \Big\{\text{vertices}(P) \in \bigotimes_{i=1}^{n} B_{a_n}(w_i): \sum_{k=2}^\infty  \frac{d \lambda(P^{k})}{dt}\Big|_{t=0}t_{k-1}+\sum_{k=2}^\infty  \frac{d^2 \lambda(P^k)}{dt^2}\Big|_{t=0}\frac{t_{k-1}^2}{7} \ge 0 \Big\}.$$
Define

$$
M_n(T)=\Big\{\text{vertices}(P) \in \bigotimes_{i=1}^{n} B_{a_n}(w_i): \text{$t_{k}=0$ for all $k \ge T$, }|P|=|P_n|\Big\}
$$
when $T \in \mathbb{N}$
and observe that if $P \in M_n(T)$, $P^k = P^\infty$ for $k \ge T$.
In particular, if $T>1$, $t_a=\max_{j \in \{1,\ldots, T-1\}} t_j$,

\begin{align*}
\alpha(n) |R(P^1) \Delta P_n|^2 \le \delta(P^1)&=L^2(P^1)-L^2(P_n)\\
&\le \bar a \xi^{\frac{1}{2}}(n) \Big(L(P^i)+L(P_n)\Big)  \sum_{j = 1}^{T-1} t_j^2 \\
&\le (T-1) \omega t_a^2.
\end{align*}
The inequality implies 
 
 $$
\Big|\frac{d \lambda(P^{k})}{dt}\Big|_{t=0}t_{k-1}\Big| \le r_* t_a^2
 $$
where $r_*$ is independent of $n, k$ thanks to the $W^{2,p}$ argument and therefore
 \begin{align*}
\sum_{k \ge 2} &\frac{d \lambda(P^{k})}{dt}\Big|_{t=0}t_{k-1}+\sum_{k \ge 2}  \frac{d^2 \lambda(P^k)}{dt^2}\Big|_{t=0}\frac{t_{k-1}^2}{7}\\
&\ge \sum_{\{k-1 \neq a\}}  \frac{d^2 \lambda(P^k)}{dt^2}\Big|_{t=0}\frac{t_{k-1}^2}{7}  +\Big( \frac{d^2 \lambda(P^{a+1})}{dt^2}\Big|_{t=0}\frac{1}{7}-r_*(T-1)\Big)t_{a}^2.
\end{align*}
Observe that there exists $N \in \mathbb{N}$ sufficiently large such that if $n \ge N$,

$$
\frac{d^2 \lambda(P^{a+1})}{dt^2}\Big|_{t=0}\frac{1}{7}-r_*(T-1) \ge 0
$$
via \eqref{lower6} as a result of

$$
\frac{\frac{b}{2}}{(\xi^2+(\frac{b}{2})^2)} \approx \frac{\sin(2\pi/n)}{2(1-\cos(2\pi/n))} \rightarrow \infty.
$$
In particular, one may let $T=T(n)=An$ where $A>0$ depends on $r_*$ and universal constants. If 

\begin{align*}
&A_{Eq(n)}=\Big\{\text{vertices}(P) \in \bigotimes_{i=1}^{n} B_{a_n}(w_i): \sum_{j \ge k} t_j^2\le \alpha t_{k-1}^2\\
& \text{ whenever $t_{k-1}>0$}, P^\infty \neq P^i \text{ if $i \in \mathbb{N}$}\Big\}
\end{align*}
thanks to $P^\infty$ being always equilateral, this yields 

\begin{equation} \label{ea} 
\inf_{P \in M_n(T) \cup A_{Eq(n)}} \lambda(P) 
= \inf_{\Big\{\text{vertices}(P) \in \bigotimes_{i=1}^{n} B_{a_n}(w_i): \text{ P is equilateral, } |P|=|P_n|\Big\}} \lambda(P). 
\end{equation}
The above then implies

$$
\Big(M_n(T) \cap \{P: P^\infty = P_n\} \Big) \cup A_n \subset
$$
\begin{align*}
\Big\{\text{vertices}(P) \in \bigotimes_{i=1}^{n} B_{a_n}(w_i):& \sum_{k=2}^\infty  \frac{d \lambda(P^{k})}{dt}\Big|_{t=0}t_{k-1}\\
&+\sum_{k=2}^\infty  \frac{d^2 \lambda(P^k)}{dt^2}\Big|_{t=0}\frac{t_{k-1}^2}{7} \ge 0, \text{ $P^\infty = P_n$} \Big\}.
\end{align*}
Therefore
\begin{align*}
&\inf_{\Big\{P \in \big(M_n(T) \cap \{P: P^\infty = P_n\} \big) \cup A_n, |P|=|P_n|\Big\}} \lambda(P) \\
&=\inf_{\Big\{\text{vertices}(P) \in \bigotimes_{i=1}^{n} B_{a_n}(w_i): \sum_{k=2}^\infty  \frac{d \lambda(P^{k})}{dt}|_{t=0}t_{k-1}+\sum_{k=2}^\infty  \frac{d^2 \lambda(P^k)}{dt^2}|_{t=0}\frac{t_{k-1}^2}{7} \ge 0, \text{ $P^\infty = P_n$}\Big \}}  \lambda(P)\\
&= \lambda(P_n).
\end{align*}
Note that  the smallness of $a_n$ \& the convexity and non-degeneracy of $P_n$ imply convexity of all $n$-gons in a neighborhood of $P_n$ relative to $\mathcal{M}(n, |P_n|)$:
assume not, then let $v_1$, $v_2$, $v_3$ be the vertices which generate a non-convex subset. Let the segment $\overrightarrow{v_1v_2}$ generate the line such that $v_3$ is on one side of it 
$$\{z: (z-v_1) \cdot \overrightarrow{v_1v_2}^{\perp} > 0\}$$     
and there is another vertex $v$ on the other side. Note that there exists $\mu_n$ small depending only on $P_n$ such that if one considers neighborhoods around its vertices and $v_i$ are in the neighborhoods with radii $\mu_n$, the line obtained via  $\overrightarrow{v_1v_2}$ will have $v_3$ on 
$$\{z: (z-v_1) \cdot \overrightarrow{v_1v_2}^{\perp} < 0\}$$ which contradicts the above.

\end{proof}

\begin{rem}
The proof encodes $N$ in an explicit manner: it is complicated, albeit depends on computable constants.
\end{rem}

\subsection{Proof of Corollary \ref{pa}}

If $\{E_k\}\subset \mathcal{M}(k, \pi)$,
$$
\infty>\limsup_{k\rightarrow \infty} \inf_{x \in \mathbb{R}^2} |E_k\Delta (B_1+x)|>0,
$$
it then follows that
$$
\limsup_{k\rightarrow \infty} \lambda(E_k)>\lambda(B_1).
$$
If not, then 
$$
\limsup_{k\rightarrow \infty} \lambda(E_k)=\lambda(B_1)
$$
via the Faber-Krahn theorem \cite{MR1512244, MR2w264470op} which implies that 
\begin{equation} \label{uz@}
\lim_{k\rightarrow \infty} \lambda(E_k)=\lambda(B_1).
\end{equation}
Suppose 
\begin{equation} \label{p1m}
\lim_{l\rightarrow \infty} \inf_{x \in \mathbb{R}^2} |E_{k_l}\Delta (B_1+x)|=\limsup_{k\rightarrow \infty} \inf_{x \in \mathbb{R}^2} |E_k\Delta (B_1+x)|>0.
\end{equation}
Hence one obtains mod translations 

$$
E_{k_l} \subset B_R
$$
for an $R>0$: assuming $E_{k_l}$ a convex $k_l$-gon which contains a side approaching $\infty$, since $|E_{k_l}|=\pi$, if $l$ is large, 

$$
E_{k_l} \subset R_{k_l}
$$
where $R_{k_l}$ is a rectangle with one side $s_{k_l}$ approaching $0$. The eigenvalue of $R_{k_l}$ is:

$$
\pi^2\Big(\frac{1}{S_{k_l}^2}+\frac{1}{s_{k_l}^2}\Big) \rightarrow \infty,
$$
therefore this contradicts

$$
\lambda(R_{k_l}) < \lambda(E_{k_l}).
$$
One may, with a similar argument, remove the convexity assumption to generate a larger class. The uniform boundedness of $\{E_{k_l}\}$ implies a converging subsequence (still expressed as $E_{k_l}$)

$$E_{k_l} \rightarrow E$$ 

\noindent with $|E|=\pi$. In particular via Chenais' theorem \cite[Theorem 2.3.18]{MR2251558},

$$\lambda(E_{k_l}) \rightarrow \lambda(E)$$
\& via the Faber-Krahn theorem thanks to \eqref{p1m},

$$
 \lambda(E)>\lambda(B_1),
$$
contradicting \eqref{uz@}.
Therefore, 

$$
\lim_{k\rightarrow \infty} \lambda(E_k) = \lambda(B_1)
$$
then implies that 
$$
\lim_{k\rightarrow \infty} \inf_{x \in \mathbb{R}^2}|E_k \Delta (B_1+x)|=0
$$
and there exists $\omega(0^+)=0$ so that assuming $|E|=\pi$,

$$
\lambda(E)-\lambda(B_1) \ge \omega( \inf_{x \in \mathbb{R}^2}|E \Delta (B_1+x)|).
$$
Let

$$e_k=\lambda(P_k)-\lambda(B_1)>0$$ 

$$
\Omega_k=\{E \in \mathcal{M}(k, \pi): \omega(|E \Delta (B_1+x)|) \ge e_k \text{     for some $x \in \mathbb{R}^2$}\}.
$$
Therefore supposing $E \in \Omega_n$,

$$
\lambda(E)-\lambda(P_n) \ge \omega(|E \Delta (B_1+x_E)|) - e_n \ge 0.
$$

\subsection{Proof of: Theorem \ref{E}, Corollary \ref{Y}, Corollary \ref{E_3}, and Corollary \ref{E_4}}
Observe that the proof of Theorem \ref{a} and an application of the mean-value theorem (via Taylor's theorem) implies 
$$
 \lambda(P)=\lambda(P_{\text{eq}})+\sum_{k=2}^\infty  \frac{d \lambda(P^{k})}{dt}\Big|_{t=0}t_{k-1}+\sum_{k=2}^\infty  \frac{d^2 \lambda(P^k)}{dt^2}\Big|_{t=t_{e_{k-1}}}\frac{t_{k-1}^2}{2},
$$
where $P^1=P$, $P^{k}=(P^{k-1})^{*}$, 

$$
\frac{d \lambda}{dt}\Big|_{t=0}=\int_0^{\xi_k} \frac{\alpha}{\xi_k}|\nabla u_{n,k}(\alpha, y_{-})|^2 d\alpha-\int_0^{\xi_k} \frac{\alpha}{\xi_k} |\nabla u_{n,k}(\alpha, y_+)|^2 d\alpha,
$$

\begin{align*}
\frac{d^2 \lambda}{d t^2}\Big|_{t=t_{e_{k-1}}}&=\frac{2(\frac{b_k}{2}-t_{e_{k-1}})}{\xi_k(\xi_k^2+(t_{e_{k-1}}-\frac{b_k}{2})^2)}\int_0^{\xi_k} \alpha |\nabla u_{n,t_{e_{k-1}}}(\alpha, y_+)|^2 d \alpha\\
&+\frac{2(\frac{b_k}{2}+t_{e_{k-1}})}{\xi_k(\xi_k^2+(t_{e_{k-1}}+\frac{b_k}{2})^2)}\int_0^{\xi_k} \alpha |\nabla u_{n,t_{e_{k-1}}}(\alpha, y_{-})|^2 d \alpha,
\end{align*}
$t_{e_{k-1}} \in [0, t_{k-1}]$. 
Hence it suffices to prove $\mathcal{E}(n, \alpha)$ is an $(n-3)$--dimensional submanifold of $\mathcal{M}(n, \alpha)$. Observe that $\mathcal{M}(n, \alpha)$ is an $(2n-4)$--dimensional manifold; set
\begin{align*}
&Q(x_1,x_2,\ldots, x_n, r_1, r_2, \ldots, r_n)\\
&=\Bigg( \sum \limits _{i=1}^{n} x_i-2 \pi \hskip .08 in,\hskip .08 in \sum \limits_{i=1}^n r_i r_{i+1} \sin x_i- \sum \limits_{i=1}^n \sin \frac{2\pi}{n}\hskip .08 in,\hskip .08 in \sum \limits _{i=1}^{n} r_i \cos \left(\sum \limits_{k=1}^{i-1} x_k \right)\hskip .08 in,\\
&\hskip .8 in  \sum \limits _{i=1}^{n} r_i \sin\left(\sum \limits_{k=1}^{i-1} x_k \right)\Bigg)
\end{align*}
$$(x_*, r_*)=\left(\frac{2\pi}{n},\ldots,\frac{2\pi}{n},1,\ldots,1\right).$$

$$r_{1,n}=-\Big(\sin \frac{2\pi}{n}+\sin \frac{4\pi}{n}+\ldots+\sin \frac{2\pi(n-1)}{n}\Big)$$

$$r_{2,n}=-\Big(\sin \frac{4\pi}{n}+\ldots+\sin \frac{2\pi(n-1)}{n}\Big)$$

$$r_{3,n}=-\Big(\sin \frac{6\pi}{n}+\ldots+\sin \frac{2\pi(n-1)}{n}\Big)$$

$$t_{1,n}=\cos \frac{2\pi}{n}+\cos \frac{4\pi}{n}+\ldots+\cos \frac{2\pi(n-1)}{n}$$

$$t_{2,n}=\cos \frac{4\pi}{n}+\ldots+\cos \frac{2\pi(n-1)}{n}$$

$$t_{3,n}= \cos \frac{6\pi}{n}+\ldots+\cos \frac{2\pi(n-1)}{n}$$
\[E_{4 \times n}=
\begin{pmatrix}
  1 &  1 & 1 & \ldots  &1 &1 \\
\cos \frac{2\pi}{n} &\cos \frac{2\pi}{n}  & \cos \frac{2\pi}{n} &  \ldots & \cos \frac{2\pi}{n}& \cos \frac{2\pi}{n}\\
r_{1,n} & r_{2,n} & r_{3,n}&\ldots &-\sin \frac{2\pi(n-1)}{n} & 0 \\
 t_{1,n}& t_{2,n}&t_{3,n}&\ldots &\cos \frac{2\pi(n-1)}{n}& 0 
 \end{pmatrix} 
\]

$J_{4 \times n}$=
\begin{equation*}
\begin{pmatrix}
  0 & 0 & 0 & 0 & \ldots & 0 & 0 \\
  2\sin \frac{2\pi}{n}& 2\sin \frac{2\pi}{n} & 2\sin \frac{2\pi}{n} & 2\sin \frac{2\pi}{n} &\ldots & 2\sin \frac{2\pi}{n} & 2\sin \frac{2\pi}{n} \\
1 & \cos \frac{2\pi}{n}   &\cos \frac{4\pi}{n} &\cos \frac{6\pi}{n}&     \ldots & \cos \frac{2\pi(n-2)}{n}& \cos \frac{2\pi(n-1)}{n}\\
1 & \sin \frac{2\pi}{n}   &\sin \frac{4\pi}{n} &\sin \frac{6\pi}{n}&     \ldots & \sin \frac{2\pi(n-2)}{n}& \sin \frac{2\pi(n-1)}{n} \\
 \end{pmatrix}.
 \end{equation*}
A calculation implies

\[ D Q (x_*, r_*)=
\begin{pmatrix}
  E & J  
 \end{pmatrix} _{4 \times 2n}.
\]
Assume $(x; r) \in \text{ker}(D Q)$. It thus follows that

$$
x_1+\dots+ x_n=0
$$

$$
r_1+\dots + r_n=0
$$

$$
r_{1,n}x_1+\dots+r_{n-1,n}x_{n-1}+r_1+r_2 \cos \frac{2\pi}{n}+\dots+r_n \cos \frac{2\pi(n-1)}{n}=0
$$

$$
t_{1,n}x_1+\dots+t_{n-1,n}x_{n-1}+r_1+r_2 \sin \frac{2\pi}{n}+\dots+r_n \sin \frac{2\pi(n-1)}{n}=0.
$$
Now, let 

$$
x_1=-\Big[x_2+\dots+x_n\Big]
$$

$$
r_1=-\Big[r_2+\dots+r_n\Big];
$$
note 

\begin{align*}
x_2  &= \frac{1}{r_{2,n}-r_{1,n}}\Big[ r_{1,n}\Big [x_3+\dots+x_n\Big]-\Big[r_{3,n}x_3+\dots+r_{n-1,n}x_{n-1}\\
&+r_1+r_2 \cos \frac{2\pi}{n}+\dots+r_n \cos \frac{2\pi(n-1)}{n}\Big]\Big]
\end{align*}
and one may compute the coefficient multiplying $x_3$ 

$$
\frac{t_{2,n}-t_{1,n}}{r_{2,n}-r_{1,n}}(r_{1,n}-r_{3,n})-t_{1,n}+t_{3,n} =\cot \frac{2\pi}{n}\Big[\sin \frac{2\pi}{n}+\sin \frac{4\pi}{n}\Big]-\cos \frac{2\pi}{n}-\cos \frac{4\pi}{n} \neq 0
$$
because 
$$
\tan\frac{2\pi}{n} \neq \tan\frac{4\pi}{n}.
$$
In particular, $x_3=x_3(x_4,\ldots, x_n, r_2,\ldots,r_n)$ 
and the kernel is $(2n-4)$--dimensional. Therefore $
Q^{-1}(0,0, \ldots, 0, 0, 0, 0)
$
is locally a $(2n-4)$--dimensional manifold.\\
Set 
\begin{align*}
&\Psi(x_1,x_2,\ldots, x_n, r_1, r_2, \ldots, r_n, s)=\\
&\Bigg( \sum \limits _{i=1}^{n} x_i-2 \pi, \hskip .08 in r_2^2+r_1^2-2r_2 r_1 \cos(x_1)-s \hskip  .08 in,\hskip .08 in \ldots \hskip .08 in, \hskip .08 in r_1^2+r_n^2-2r_1 r_n \cos(x_n)-s\hskip .08 in,  \\
& \sum \limits_{i=1}^n r_i r_{i+1} \sin x_i- \sum \limits_{i=1}^n \sin \frac{2\pi}{n}\hskip .08 in,\hskip .08 in \sum \limits _{i=1}^{n} r_i \cos \left(\sum \limits_{k=1}^{i-1} x_k \right)\hskip .08 in,\hskip .08 in  \sum \limits _{i=1}^{n} r_i \sin\left(\sum \limits_{k=1}^{i-1} x_k \right)\Bigg)
\end{align*}
$$(x_*, r_*, s_*)=\left(\frac{2\pi}{n},\ldots,\frac{2\pi}{n},1,\ldots,1, 2 \sin \frac{\pi}{n}\right)$$

$$r_{1,n}=-\Big(\sin \frac{2\pi}{n}+\sin \frac{4\pi}{n}+\ldots+\sin \frac{2\pi(n-1)}{n}\Big)$$

$$r_{2,n}=-\Big(\sin \frac{4\pi}{n}+\ldots+\sin \frac{2\pi(n-1)}{n}\Big)$$

$$r_{3,n}=-\Big(\sin \frac{6\pi}{n}+\ldots+\sin \frac{2\pi(n-1)}{n}\Big)$$

$$t_{1,n}=\cos \frac{2\pi}{n}+\cos \frac{4\pi}{n}+\ldots+\cos \frac{2\pi(n-1)}{n}$$

$$t_{2,n}=\cos \frac{4\pi}{n}+\ldots+\cos \frac{2\pi(n-1)}{n}$$

$$t_{3,n}= \cos \frac{6\pi}{n}+\ldots+\cos \frac{2\pi(n-1)}{n}$$
\[N_{(n+4) \times n}=
\begin{pmatrix}
  1 &  1 & 1 & \ldots  &1 &1 \\
  2\sin \frac{2\pi}{n} & 0 & 0 &  \ldots&  0 & 0 \\
0 & 2\sin \frac{2\pi}{n} & 0 &  \ldots &0 &0\\
    \vdots  &  \ldots   &  & \ddots  &\vdots  & \vdots  \\
    0 & 0 & 0 &  \ldots & 2\sin \frac{2\pi}{n}& 0 \\
0 & 0 & 0 &  \ldots & 0 & 2\sin \frac{2\pi}{n} \\
\cos \frac{2\pi}{n} &\cos \frac{2\pi}{n}  & \cos \frac{2\pi}{n} &  \ldots & \cos \frac{2\pi}{n}& \cos \frac{2\pi}{n}\\
r_{1,n} & r_{2,n} & r_{3,n}&\ldots &-\sin \frac{2\pi(n-1)}{n} & 0 \\
 t_{1,n}& t_{2,n}&t_{3,n}&\ldots &\cos \frac{2\pi(n-1)}{n}& 0 
 \end{pmatrix} 
\]

$O_{(n+4) \times (n+1)}$=
\begin{equation*}
\begin{pmatrix}
  0 & 0 & 0 & 0 & \ldots & 0 & 0& 0 \\
  2(1-\cos \frac{2 \pi}{n}) & 2(1-\cos \frac{2 \pi}{n}) & 0 & 0 & \ldots & 0&  0 & -1 \\
0& 2(1-\cos \frac{2 \pi}{n}) & 2(1-\cos \frac{2 \pi}{n}) & 0 & \ldots &0 & 0 & -1 \\
    \vdots  &   \ldots   & \ddots  &  \ddots  & &    &   \vdots &   \vdots\\
 0& 0 & 0 &0& \ldots &2(1-\cos \frac{2 \pi}{n}) & 2(1-\cos \frac{2 \pi}{n}) & -1\\
 2(1-\cos \frac{2 \pi}{n}) & 0 & 0 &0& \ldots &0& 2(1-\cos \frac{2 \pi}{n}) & -1\\
 2\sin \frac{2\pi}{n}& 2\sin \frac{2\pi}{n} & 2\sin \frac{2\pi}{n} & 2\sin \frac{2\pi}{n} &\ldots & 2\sin \frac{2\pi}{n} & 2\sin \frac{2\pi}{n}& 0 \\
1 & \cos \frac{2\pi}{n}   &\cos \frac{4\pi}{n} &\cos \frac{6\pi}{n}&     \ldots & \cos \frac{2\pi(n-2)}{n}& \cos \frac{2\pi(n-1)}{n}& 0 \\
1 & \sin \frac{2\pi}{n}   &\sin \frac{4\pi}{n} &\sin \frac{6\pi}{n}&     \ldots & \sin \frac{2\pi(n-2)}{n}& \sin \frac{2\pi(n-1)}{n}& 0 \\
 \end{pmatrix}.
 \end{equation*}
A calculation implies 

\[ D \Psi (x_*, r_*, s_*)=
\begin{pmatrix}
  N &  O 
 \end{pmatrix} _{(n+4) \times (2n+1)}.
\]
Assume $(x; r; s) \in \text{ker}(D \Psi)$. Let  $\alpha= 2\sin \frac{2\pi}{n}$, $w=2(1-\cos \frac{2 \pi}{n})$; \\
it thus follows that

$$
x_1+\dots+ x_n=0
$$

$$
\alpha x_1+w r_1+wr_2-s=0
$$

$$
\alpha x_2+w r_2+wr_3-s=0
$$

\hskip 3 in \vdots

$$
\alpha x_n+w r_n+wr_{1}-s=0.
$$
Note that thanks to the $n+1$ equations, $x_j$ where $j \in \{1,2,\ldots,n\}$ may be written in terms of the variables $r_1, r_2,\ldots,r_n$. Therefore
since utilizing

$$
r_1+\dots + r_n=0
$$

$$
r_{1,n}x_1+\dots+r_{n-1,n}x_{n-1}+r_1+r_2 \cos \frac{2\pi}{n}+\dots+r_n \cos \frac{2\pi(n-1)}{n}=0
$$

$$
t_{1,n}x_1+\dots+t_{n-1,n}x_{n-1}+r_1+r_2 \sin\frac{2\pi}{n}+\dots+r_n \sin \frac{2\pi(n-1)}{n}=0
$$
allows elimination of another $3$ variables, the kernel may be expressed in terms of $n-3$ variables.
Hence $
\Psi^{-1}(0,0, \ldots, 0, 0, 0, 0)
$
is locally an $(n-3)$--dimensional manifold. This then yields Theorem \ref{E}. \\

\noindent Supposing $n=3$, the symmetrization implies that $T^*$ is an isosceles triangle and therefore the symmetry yields

$$
\frac{d \lambda(T^*)}{dt}\Big |_{t=0}=0.
$$
In particular, Corollary \ref{Y} then follows via iterating the process and also this implies Corollary \ref{E_3}. Observe that after one iteration, 

\begin{align*}
\frac{d^2 \lambda(T^*)}{d t^2}\Big  |_{t=0}&=\Big(\frac{2(\frac{b}{2}-t)}{\xi(\xi^2+(t-\frac{b}{2})^2)}\int_0^{\xi} \alpha |\nabla u_{3,t}(\alpha, y_+)|^2 d \alpha\\
&\hskip .9in +\frac{2(\frac{b}{2}+t)}{\xi_k(\xi^2+(t+\frac{b}{2})^2)}\int_0^{\xi} \alpha |\nabla u_{3,t}(\alpha, y_{-})|^2 d \alpha \Big)  \Big|_{t=0}\\
&=\frac{2b}{\xi(\xi^2+(\frac{b}{2})^2)}\int_0^{\xi} \alpha |\nabla u(\alpha, y_+)|^2 d \alpha
\end{align*}
via symmetry and this proves Corollary \ref{E_4} because $2\rho=b \xi$.

\subsection{Proof of Theorem \ref{Yzk}}
Since $\delta_{\lambda}(aP)=\delta_{\lambda}(P)$ and $\delta_{\mathcal{P}}(aP)=\delta_{\mathcal{P}}(P)$ if $a>0$, assume without loss of generality that $|P|=|P_3|$.  Next suppose 
$$\max\{\delta_{\mathcal{P}}(P), \delta_{\lambda}(P)\} \rightarrow \infty;$$
note that there exists a rectangle containing $P$ such that a side approaches $\infty$ and a rectangle inside $P$ such that a side approaches $\infty$ where the second side is a fixed fraction of the first, say the side with length $a$. Since the triangle $P$ has fixed area, 

$$
\lambda(P) \approx a^2;
$$
hence since 

$$
L^2(P) \approx a^2
$$
the statement in the theorem is true (observe that $\approx$ refers to an upper and lower bound only depending on universal constants). Therefore without loss

$$
\delta_{\lambda}(P)= |P|\lambda(P)-|P_3|\lambda(P_3) \le a_*,
$$ 

$$
\delta_{\mathcal{P}}(P)=\frac{L^2(P)}{12\sqrt{3}|P|}-1 \le a_*.
$$
In the remaining argument, one may remove the area constraint $|P|=|P_3|$ (for the explicit scaling exposition). Observe that if $|P|=|sP_{3}|$,

$$
\inf_{R} \frac{|R(P) \Delta sP_{3}|}{|P|} \ge w,
$$
Corollary \ref{Y}, \eqref{ozm}, and \eqref{ai@} imply

$$\delta_{\mathcal{P}}(P) \ge z_1 w^2$$
\&

$$
\delta_{\lambda}(P) \ge z_2 w^2.
$$
In particular

$$
\frac{\delta_{\mathcal{P}}(P)}{\delta_{\lambda}(P) } \le \frac{a_*}{z_2 w^2}
$$
\&

$$
\frac{\delta_{\lambda}(P)}{\delta_{\mathcal{P}}(P) } \le \frac{a_*}{z_1 w^2}.
$$
Hence one may assume some rotation exists s.t. 

$$
\frac{|R(P) \Delta sP_{3}|}{|P|} \le w
$$
where $w>0$ is small. Thus the sides of $P$ are close to 

$$
\sqrt{|sP_3|}\frac{2}{3^{1/4}};
$$
this implies $t>0$ is sufficiently small (in Corollary \ref{Y}) and $b_k \approx \sqrt{|sP_3|}\frac{2}{3^{1/4}}$, $\xi_k \approx \frac{\sqrt{3}}{2}b_k$. This then implies
\begin{align*}
\frac{2(\frac{b_k}{2}-t)}{\xi_k(\xi_k^2+(t-\frac{b_k}{2})^2)}\int_0^{\xi_k} & \alpha |\nabla u_{n,t}(\alpha, y_+)|^2 d \alpha\\
&+\frac{2(\frac{b_k}{2}+t)}{\xi_k(\xi_k^2+(t+\frac{b_k}{2})^2)}\int_0^{\xi_k} \alpha |\nabla u_{n,t}(\alpha, y_{-})|^2 d \alpha \\
&\ge 2e>0.
\end{align*}
In particular, define $\mu$ via 

$$
|\mu P|=|P_3|.
$$
Next note that $P^1=\mu P$ \& $P^\infty=P_3$, therefore via Corollary \ref{Y} and \eqref{ai@}

\begin{align*}
L^2(\mu P)-12 \sqrt{3}|\mu P|&=L^2(P^1)-L^2(P_3) \\
&=\Big(L(P^1)+L(P_3)\Big) \Big(L(P^1)-L(P_3) \Big)\\
&=\Big(L(P^1)+L(P_3)\Big) \Big(\sum_{j=1}^\infty (L(P^j)-L(P^{j +1})) \Big)\\
&\approx \sqrt{|P_3|} \sum_{j \ge 1} t_j^2 \approx \sqrt{|P_3|} (\lambda(\mu P)-\lambda(P_3))
\end{align*}
($\approx$ in the above means up to two constants which depend on $w$ and universal constants).
Hence

$$
\frac{|P_3|^2}{|P|} (L^2(P)-12 \sqrt{3}|P|) \approx \sqrt{|P_3|}(|P|\lambda(P)-|P_3|\lambda(P_3)).
$$
\begin{figure}[htbp]
\centering
\includegraphics[width=.9 \textwidth]{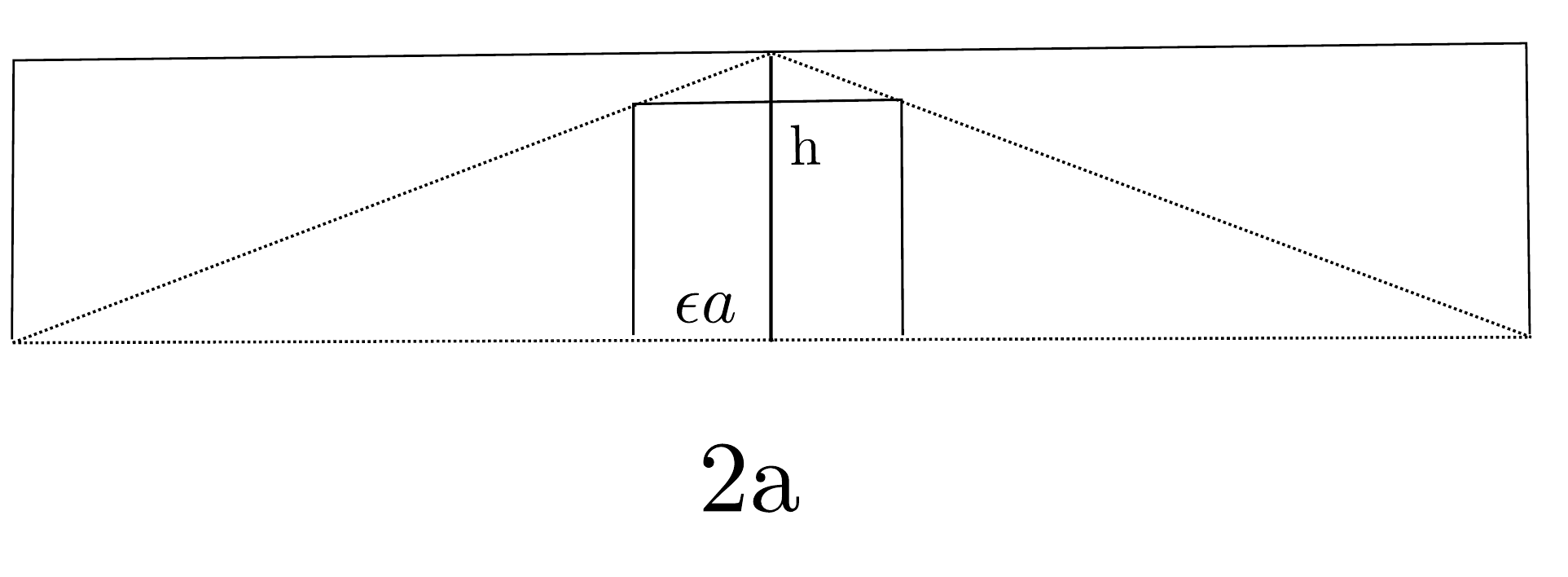}
\caption{}
 \label{wq349}
\end{figure}

\subsection{Proof of Corollary \ref{Yzk6}}
Note that the first two inequalities are immediate from Theorem \ref{Yzk}. 
Now, suppose $P$ is a triangle with sides $e$, $e$, 
 $2a \rightarrow \infty$ so that its height is $h>0$ such that $R_a$ is a rectangle inside with sides $2a\epsilon$, $(1-\epsilon)h$ also $Q_a$ is a rectangle with sides $2a$, $h$ containing $P$, Figure \ref{wq349}. Hence
 
 $$
 \pi^2\Big[\frac{1}{(1-\epsilon)^2}a^2 + \frac{1}{4(a\epsilon)^2}\Big] > \lambda(T)>         \pi^2\Big[a^2 + \frac{1}{4a^2}\Big].
 $$
Now note that $L^2(P)=(2a+2e)^2$ and 
$$
\frac{\pi^2[a^2 + \frac{1}{4a^2}]}{L^2(P)-12\sqrt{3}} \rightarrow \frac{\pi^2}{16}
$$

$$
\frac{\pi^2\Big[\frac{1}{(1-\epsilon)^2}a^2 + \frac{1}{4(a\epsilon)^2}\Big]}{L^2(P)-12\sqrt{3}} \rightarrow \frac{1}{(1-\epsilon)^2}\frac{\pi^2}{16}
$$
such that if $a$ is sufficiently large, 
 $$
 \frac{1}{(1-\epsilon)^2}\frac{\pi^2}{16}  > \frac{\lambda(T)}{L^2(P)-12\sqrt{3}} > (1-\epsilon)\frac{\pi^2}{16}.
 $$

\subsection{Proof of Theorem \ref{Yz}}
Note that since 
$$
\frac{|R(T) \Delta sT_{Eq}|}{|T|} \le 2
$$
when
$$
\delta_{\lambda}(T):=|T|\lambda(T)-|T_{Eq}|\lambda(T_{Eq}) \ge q
$$
the theorem is true and one may let $a=\frac{4}{q}$. Therefore without loss, one may consider $T$ such that $\delta_{\lambda}(T)$ is arbitrarily small. Next, suppose $T \in \mathcal{M}(3,1)$. In this case, if one of the sides is large, then one may consider a rectangle and obtain as in the proof of Theorem \ref{Yzk} a contradiction. Therefore, without loss $T \subset B_r$. Let $z>0$ and observe that supposing $B_z(T_{Eq})$ denotes a ball around $T_{Eq}$ in  $\mathcal{M}(3,1)$ (note that the assumption is $|T|=1$), one obtains thanks to \eqref{ozm} or the specific equivalence in Theorem \ref{Yzk}

$$\inf_{\{T: T \in (B_z(T_{Eq}))^c \subset  \mathcal{M}(3,1)\}} \delta_{\lambda}(T) \ge \alpha \inf_{\{T: T \in (B_z(T_{Eq}))^c \subset  \mathcal{M}(3,1)\}} \delta(T)=a_1(z)>0;$$
hence one may let $z$ be a small number so that the sides of $T$ are close to 
$$
\frac{2}{3^{1/4}};
$$
this implies $t>0$ is sufficiently small (in Corollary \ref{Y}) and $b_k \approx \frac{2}{3^{1/4}}$, $\xi_k \approx \frac{\sqrt{3}}{2}b_k$. Hence 

\begin{align*}
\frac{2(\frac{b_k}{2}-t)}{\xi_k(\xi_k^2+(t-\frac{b_k}{2})^2)}&\int_0^{\xi_k} \alpha |\nabla u_{n,t}(\alpha, y_+)|^2 d \alpha\\
&+\frac{2(\frac{b_k}{2}+t)}{\xi_k(\xi_k^2+(t+\frac{b_k}{2})^2)}\int_0^{\xi_k} \alpha |\nabla u_{n,t}(\alpha, y_{-})|^2 d \alpha \\
&\ge 2e>0.
\end{align*}
\begin{figure}[htbp] 
\centering
\includegraphics[width=.9 \textwidth]{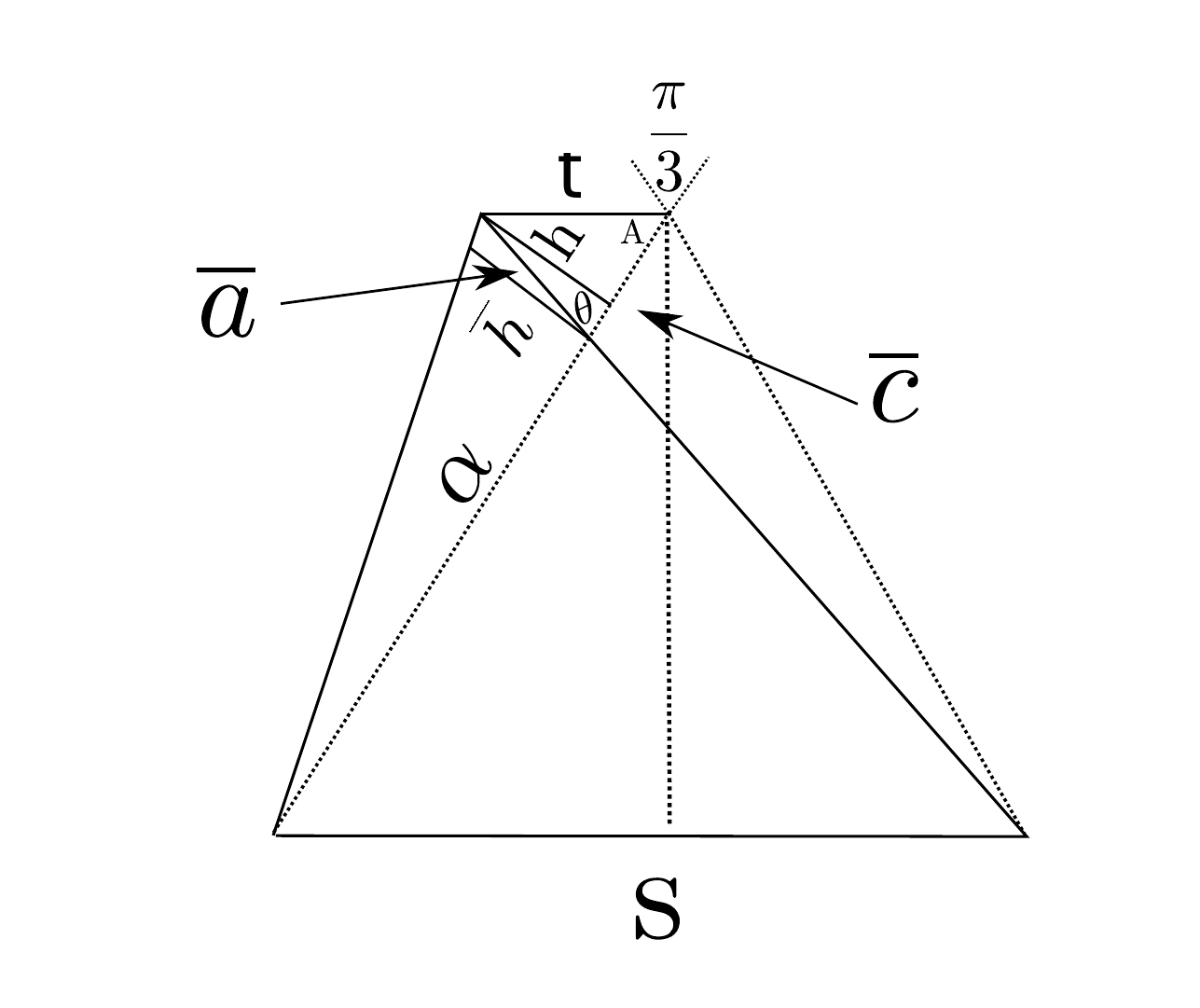}
\caption{}
\label{wq34k}
\end{figure}
In particular Corollary \ref{Y} and \eqref{ozm} therefore imply
$$
\lambda(T)-\lambda(T_{Eq}) \ge e \sum_{k=2}^\infty  t_{k-1}^2 \ge e_a \alpha(n) |R(T) \Delta T_{Eq}|^2.
$$
If $|T| \neq 1$, set $\mu=\sqrt{\frac{1}{|T|}}$ and apply the argument above to $\mu T$. Now to prove sharpness, let $P_t$ be a small perturbation of $P_3$ as in Figure \ref{wq34k}. Hence the triangle with sides $\alpha$ and $\overline{h}$ is contained in $P_t \Delta P_3$. Therefore via Corollary \ref{E_4} it is sufficient to show $\overline{h} \approx t$: observe $t \sin A =h$ and since the triangles with sides $\alpha,$ $\overline{h}$ \& $\alpha+\overline{a}\cos \theta $, $h$ are similar,
$$
\frac{\overline{h}}{\alpha}= \frac{h}{\alpha+\overline{a}\cos \theta }. 
$$ 
Therefore since if $t>0$ is small, $\theta \approx \frac{\pi}{3}$, $\alpha+\overline{c} = s$ and $\overline{c} \rightarrow 0$, the proportionality constants are obtained in terms of 

$$
 \frac{\alpha \sin A}{\alpha+\overline{a}\cos \theta }:
$$

$$\overline{h} =\alpha\frac{h}{\alpha+\overline{a}\cos \theta }=t \frac{\alpha \sin A}{\alpha+\overline{a}\cos \theta}.$$
Thus suppose there is a function $h$ with $h(a)=o(a)$ such that 

$$
|R(T) \Delta P_{Eq}|^2 \le h(\lambda(T)-\lambda(T_{Eq})); 
$$
in particular, let $T=P_t$ s.t. modulo a rotation and translation,  Corollary \ref{E_4} \& $\overline{h} \approx t$ yields

$$a_2 t^2 \le h(a_*t^2+o(t^2))$$
where $a_2, a_*>0$. 
Therefore

$$
\frac{a_2}{a_*} \le \liminf_{t \rightarrow 0^+} \frac{h(a_*t^2+o(t^2))}{a_*t^2+o(t^2)}=0, 
$$
a contradiction.

\subsection{Proof of Theorem \ref{Eu}} \label{z1k}
Theorem \ref{E} implies 

$$
 \lambda(P)=\lambda(P_{\infty})+\sum_{k=2}^\infty  \frac{d \lambda(P^{k})}{dt}\Big |_{t=0}t_{k-1}+\sum_{k=2}^\infty  \frac{d^2 \lambda(P^k)}{dt^2}\Big |_{t=t_{e_{k-1}}}\frac{t_{k-1}^2}{2}.
$$
Observe that $P_\infty$ is equilateral and hence a rhombus. Therefore Steiner symmetrization with respect to the line generated by one of the sides yields a rectangle. 
\begin{figure}[htbp] \label{wq34q}
\centering
\includegraphics[width=.9 \textwidth]{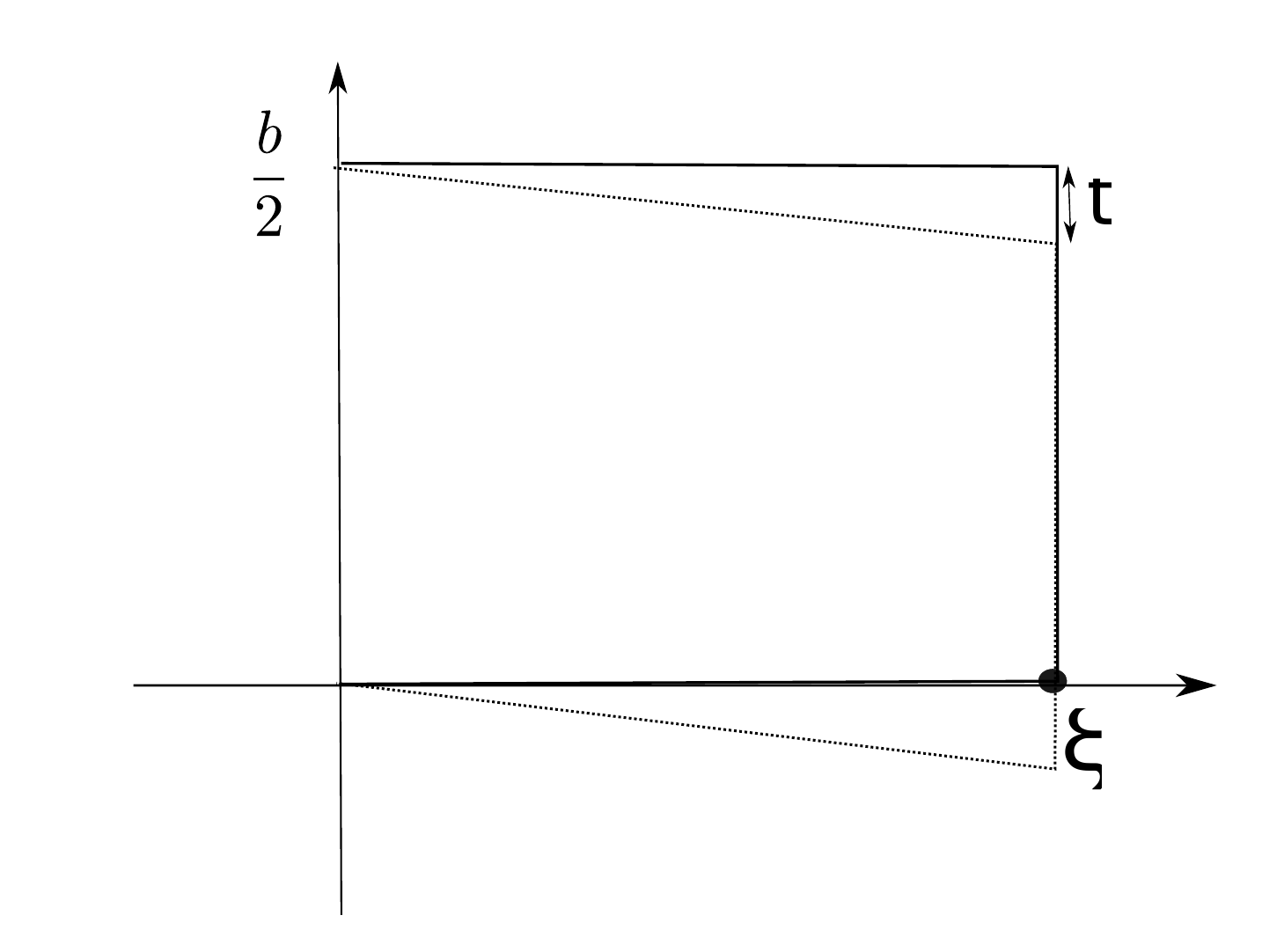}
\caption{}
\label{a12}
\end{figure}
Hence it is sufficient to calculate the eigenvalue difference between the rhombus and the rectangle:

\begin{equation} \label{v}
\frac{d \lambda}{dt}=-\int_{S_t} C(N^i \nabla_i u)^2 dS_t;
\end{equation}

 \begin{align*}
\frac{d^2 \lambda}{dt^2}&=-\int_{S_t} \frac{\delta C}{\delta t}(\nabla^i u \nabla_i u) d S_t-
&2 \int_{S_t} C(-\nabla C \cdot N)|\nabla u|^2dS_t+ \int_{S_t}C^2 B_\alpha^\alpha \nabla^i u \nabla_i u dS_t,
&\end{align*} 

{\bf The case: $y_{+}$ (the upper line-segment)} \\ 
Define
$$
z= \begin{bmatrix}
z^x(\alpha, t)
\\
z^y(\alpha,t)
\end{bmatrix}=
 \begin{bmatrix}
\alpha
\\
-\frac{t}{\xi}\alpha + \frac{b}{2}
\end{bmatrix}$$

$$
N= \begin{bmatrix}
t
\\
\xi
\end{bmatrix}\frac{1}{\sqrt{\xi^2+t^2}}
$$
$$
\partial_t \begin{bmatrix}
z^x(\alpha, t)
\\
z^y(\alpha,t)
\end{bmatrix}=
 \begin{bmatrix}
0
\\
-\frac{\alpha}{\xi}
\end{bmatrix}$$

$$v^\alpha= \frac{\alpha t}{\xi^2+t^2}$$

$$
\partial_\alpha \begin{bmatrix}
z^x(\alpha, t)
\\
z^y(\alpha,t)
\end{bmatrix}=
 \begin{bmatrix}
1
\\
-\frac{t}{\xi}
\end{bmatrix}$$

\begin{align*}
\frac{\delta z}{\delta t}&=\partial_tz-v^\alpha \partial_\alpha z\\
&=-\frac{\alpha}{\xi^2+t^2} \begin{bmatrix}
t
\\
\xi
\end{bmatrix}
\end{align*}

 \begin{equation} \label{ro}
 C=\frac{\delta z}{\delta t} \cdot N=-\frac{\alpha}{\sqrt{\xi^2+t^2}}
 \end{equation}
 
 $$ \partial_tC
=\frac{\alpha t}{(\xi^2+t^2)^{3/2}};
$$
 $$ \partial_\alpha C=-\frac{1}{\sqrt{\xi^2+t^2}}.$$
In particular, 
\begin{align*}
\frac{\delta C}{\delta t}&=\partial_tC-v^\alpha \nabla_\alpha C\\
&=2\frac{\alpha t}{(\xi^2+t^2)^{3/2}};
\end{align*}
in order to compute $\nabla C$, set $x=\alpha$, $y=-\frac{t}{\xi}x+\frac{b}{2}$:

$$
C=-\frac{x^2}{\xi\sqrt{x^2+(y-\frac{b}{2})^2}}
$$

 $$
 \nabla C=
 \begin{bmatrix}
\frac{-x^3-2x(y-\frac{b}{2})^2}{\xi(x^2+(y-\frac{b}{2})^2)^{\frac{3}{2}}}
\\
\frac{x^2(y-\frac{b}{2})}{\xi (x^2+(y-\frac{b}{2})^2)^{\frac{3}{2}}}
\end{bmatrix};
$$

\begin{align*}
\nabla C \cdot N&=  \begin{bmatrix}
\frac{-x^3-2x(y-\frac{b}{2})^2}{\xi(x^2+(y-\frac{b}{2})^2)^{\frac{3}{2}}}
\\
\frac{x^2(y-\frac{b}{2})}{\xi (x^2+(y-\frac{b}{2})^2)^{\frac{3}{2}}}
\end{bmatrix} \cdot \begin{bmatrix}
t
\\
\xi
\end{bmatrix}\frac{1}{\sqrt{\xi^2+t^2}}\\
&=-\frac{2t}{(\xi^2+t^2)};
\end{align*}
since $B_\alpha^\alpha$ has a Dirac mass at the vertex, by comparing $u$ with the solution of the corresponding equation in a disk intersecting the vertex and containing the polygon, $|\nabla u(v)|=0$. This yields
 \begin{align} \label{a1u}
&\Big(-\int_{l_{1,2}(t)} \frac{\delta C}{\delta t}(\nabla^i u \nabla_i u) dH-
2 \int_{l_{1,2}(t)} C(-\nabla C \cdot N)|\nabla u|^2dH\\\notag
&+ \int_{l_{1,2}(t)}C^2 B_\alpha^\alpha \nabla^i u \nabla_i u dH \Big)\\
&=\frac{2t}{\xi(\xi^2+t^2)}\int_0^\xi \alpha |\nabla u|^2 d \alpha.
\notag
\end{align} 

{\bf The case: $y_{-}$ (the lower line-segment)} \\ 

$$
z= \begin{bmatrix}
z^x(\alpha, t)
\\
z^y(\alpha,t)
\end{bmatrix}=
 \begin{bmatrix}
\alpha
\\
\frac{-t}{\xi}\alpha
\end{bmatrix}$$

$$
N= \begin{bmatrix}
-t
\\
-\xi
\end{bmatrix}\frac{1}{\sqrt{\xi^2+t^2}}
$$

$$
\partial_\alpha \begin{bmatrix}
z^x(\alpha, t)
\\
z^y(\alpha,t)
\end{bmatrix}=
 \begin{bmatrix}
1
\\
\frac{-t}{\xi}
\end{bmatrix}$$

$$
\partial_t \begin{bmatrix}
z^x(\alpha, t)
\\
z^y(\alpha,t)
\end{bmatrix}=
 \begin{bmatrix}
0
\\
-\frac{\alpha}{\xi}
\end{bmatrix}$$

$$v^\alpha= \frac{\alpha t}{\xi^2+t^2}$$

\begin{align*}
\frac{\delta z}{\delta t}&=\partial_tz-v^\alpha \partial_\alpha z\\
&=-\frac{\alpha}{\xi^2+t^2} \begin{bmatrix}
t
\\
\xi
\end{bmatrix}
\end{align*}

\begin{equation} \label{oz6}
C=\frac{\delta z}{\delta t} \cdot N=
\frac{\alpha}{\sqrt{\xi^2+t^2}}
\end{equation}
 
 $$ \partial_tC
=-\frac{\alpha t}{(\xi^2+t^2)^{3/2}}
$$
 $$ \partial_\alpha C=\frac{1}{\sqrt{\xi^2+t^2}}.$$
In particular, 
\begin{align*}
\frac{\delta C}{\delta t}&=\partial_tC-v^\alpha \nabla_\alpha C\\
&= -2\frac{\alpha t}{(\xi^2+t^2)^{3/2}};
\end{align*}
in order to compute $\nabla C$, set $x=\alpha$, $y=\frac{-t}{\xi}x$:

$$
C=\frac{x^2}{\xi\sqrt{x^2+y^2}}
$$

 $$
 \nabla C=
\begin{bmatrix}
\frac{x^3+2xy^2}{\xi(x^2+y^2)^{\frac{3}{2}}}
\\
-\frac{x^2y}{\xi (x^2+y^2)^{\frac{3}{2}}}
\end{bmatrix}
$$

\begin{align*}
\nabla C \cdot N&=\begin{bmatrix}
\frac{x^3+2xy^2}{\xi(x^2+y^2)^{\frac{3}{2}}}
\\
-\frac{x^2y}{\xi (x^2+y^2)^{\frac{3}{2}}}
\end{bmatrix} \cdot
\begin{bmatrix}
-t
\\
-\xi
\end{bmatrix}\frac{1}{\sqrt{\xi^2+t^2}}\\
&=\frac{-2t}{(\xi^2+t^2)}
\end{align*}
since $B_\alpha^\alpha$ has a Dirac mass at the vertex, by comparing $u$ with the solution of the corresponding equation in a disk intersecting the vertex and containing the polygon, $|\nabla u(v)|=0$. This yields
 \begin{align}
&\Big(-\int_{l_{4,3}(t)} \frac{\delta C}{\delta t}(\nabla^i u \nabla_i u) dH-
2 \int_{l_{4,3}(t)} C(-\nabla C \cdot N)|\nabla u|^2dH \label{a*5k}\\
&+ \int_{l_{4,3}(t)}C^2 B_\alpha^\alpha \nabla^i u \nabla_i u dH \Big)\notag\\
&=\frac{-2t}{\xi(\xi^2+t^2)}\int_0^\xi \alpha |\nabla u|^2 d \alpha.
\notag
\end{align}

{\bf The case: $y_{r}$ (the right line-segment)} \\ 

$$
z= \begin{bmatrix}
z^x(\alpha, t)
\\
z^y(\alpha,t)
\end{bmatrix}=
 \begin{bmatrix}
\xi
\\
\alpha-t
\end{bmatrix},$$
$\alpha \in [0,\frac{b}{2}],$
$$
N= \begin{bmatrix}
1
\\
0
\end{bmatrix}
$$

$$
\partial_\alpha \begin{bmatrix}
z^x(\alpha, t)
\\
z^y(\alpha,t)
\end{bmatrix}=
 \begin{bmatrix}
0
\\
1
\end{bmatrix}$$

$$
\partial_t \begin{bmatrix}
z^x(\alpha, t)
\\
z^y(\alpha,t)
\end{bmatrix}=
 \begin{bmatrix}
0
\\
-1
\end{bmatrix}$$

$$v^\alpha= -1$$

\begin{align*}
\frac{\delta z}{\delta t}&=\partial_tz-v^\alpha \partial_\alpha z\\
&= \begin{bmatrix}
0
\\
0
\end{bmatrix}
\end{align*}

\begin{equation} \label{oz6l}
C=\frac{\delta z}{\delta t} \cdot N=0.
\end{equation}
 Hence \eqref{ro}, \eqref{a1u}, \eqref{oz6}, \eqref{a*5k},  \& \eqref{oz6l} imply

\begin{align*}
\frac{d \lambda}{dt}&=-\int_{S_t} C(N^i \nabla_i u)^2 dS_t\\
&=\frac{1}{\xi} \Big(\int_0^\xi \alpha |\nabla u(\alpha, y_{+}(\alpha))|^2 d\alpha- \int_0^\xi \alpha |\nabla u(\alpha, y_{-}(\alpha))|^2 d\alpha\Big);
\end{align*}

$$
\frac{d^2 \lambda}{d t^2}=\frac{2t}{\xi(\xi^2+t^2)}\Big(\int_0^\xi \alpha |\nabla u(\alpha, y_{+}(\alpha))|^2 d \alpha-\int_0^\xi \alpha |\nabla u(\alpha, y_{-}(\alpha))|^2 d \alpha \Big);
$$

$$
\lambda(P_\infty)=\lambda(P_R)+\frac{d \lambda}{d t}\Big|_{t=0}t^*+\frac{d^2 \lambda}{d t^2}\Big|_{t=0}\frac{(t^*)^2}{2}+o((t^*)^2)=\lambda(P_R)+o((t^*)^2)
$$
thus one may extend $o((t^*)^2)$ to generate $\Psi$.

\subsection{Proof of Corollary \ref{o}}

There exists $N \ge 5$ such that for all $n \ge N$,   
$\alpha P_n$ minimizes $\lambda$ in $\mathcal{A}_{n}(\alpha^2 \pi)$. Moreover, the polygonal isoperimetric inequality implies that $\alpha P_n$ minimizes the perimeter in $\mathcal{M}(n, \alpha^2 \pi)$. Therefore supposing 
$$
P \in \cup_{\alpha>0} \mathcal{A}_{n}(\alpha),
$$
define $\alpha$ with $|\alpha P_n|=|P|$ so that

\begin{align*}
E(P)&=\Psi \lambda(P)+\Sigma \int_{\partial P} d\mathcal{H}^1+\Pi \int_P dA\\
& \ge \Psi \lambda(P_n) \frac{1}{\alpha^2}+\Sigma \mathcal{H}^1(\partial P_n)\alpha+\Pi |P_n| \alpha^2=h(\alpha).
\end{align*}
Thus one may optimize $h(\alpha)$ to obtain the $4^{th}$ order equation for $\alpha$.

\subsection{The rate estimate for triangles} \label{pq3}

\begin{figure}[htbp] 
\centering
\includegraphics[width=.9 \textwidth]{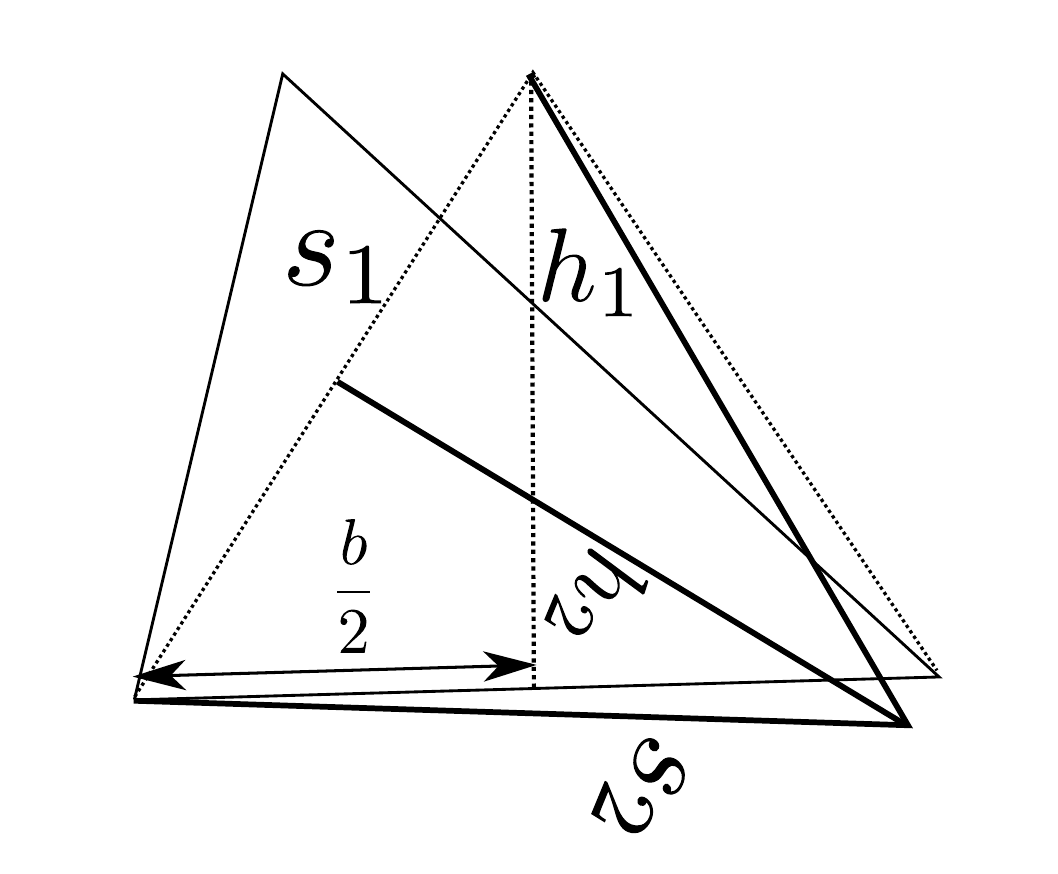}
\caption{}
\label{wqh34}
\end{figure}

\begin{thm}
Assume $A_{3}$ is the set from \eqref{o2}. Then for $a_3$ small,
$$
A_3= \Big\{\text{vertices}(P) \in \bigotimes_{i=1}^{3} B_{a_3}(w_i): P^\infty \neq P^i \text{ if $i \in \mathbb{N}$}\Big\}.
$$
\end{thm}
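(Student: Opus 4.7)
In view of the definition \eqref{o2} of $A_3$, the statement reduces to showing that, for $a_3$ small, every triangle $P$ with $\mathrm{vertices}(P)\in\bigotimes_{i=1}^3 B_{a_3}(w_i)$ and $P^\infty\ne P^i$ for every $i\in\mathbb{N}$ automatically satisfies (i) $P^\infty=P_3$ and (ii) the rate $\sum_{j\ge k}t_j^2\le\alpha\,t_{k-1}^2$ whenever $t_{k-1}>0$. First I would dispose of (i): each Steiner symmetrization preserves area and strictly decreases the isoperimetric deficit unless the triangle is already symmetric under the relevant reflection, so $\{P^k\}$ is deficit-monotone and subsequentially converges to an equilateral triangle of area $|P_3|$; for $a_3$ small, the only admissible such equilateral whose vertices can lie in $\bigotimes B_{a_3}(w_i)$ (after the natural rigid motion) is $P_3$ itself.

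The heart of the matter is (ii). The plan is to analyse the linearization of $T\mapsto T^*$ at the equilateral fixed point. I would parameterize triangles near $P_3$ by the side-length deviations $(\sigma_1,\sigma_2,\sigma_3)$ with $\sigma_1+\sigma_2+\sigma_3=0$, a two-dimensional tangent space on which one symmetrization acts, to leading order, as orthogonal projection onto the one-dimensional subspace \emph{isosceles with respect to the chosen base}. The three isosceles subspaces, one per base, are cyclically permuted by the $\mathbb{Z}/3$ symmetry of $P_3$ and pairwise meet at angle $\pi/3$ as unoriented lines; hence the composition of two consecutive projections has operator norm $|\cos(\pi/3)|=\tfrac{1}{2}$. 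This produces a one-step contraction
$$
\|P^{k+1}-P_3\|\le\tfrac{1}{2}\,\|P^k-P_3\|\qquad(k\ge 2),
$$
valid once $P^2$ has landed on the first isosceles subspace; together with the direct estimate $t_k\le C\,\|P^k-P_3\|$ this yields geometric decay of the $t_k$'s and, after summing a geometric series, a bound of the desired form $\sum_{j\ge k}t_j^2\le \alpha\,t_{k-1}^2$.

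The main obstacle will be the uniformity of the contraction on a genuine neighborhood of $P_3$, together with the careful treatment of the initial step when $P^1$ is nearly (but not exactly) symmetric under the first reflection: in that regime $t_1$ can be much smaller than $\|P^1-P_3\|$ while subsequent $t_j$'s remain of the order of $\|P^1-P_3\|$, so controlling $t_2^2/t_1^2$ uniformly is delicate. The nondegeneracy hypothesis $P^\infty\ne P^i$, together with the smallness of $a_3$, is what excludes pathological configurations, but implementing this rigorously will require tracking the cyclic relabelling of vertices across iterations, verifying that $\mathrm{vertices}(P^k)$ stays in $\bigotimes B_{a_3}(w_i)$ after a small rigid motion, and choosing the starting triplet $(v_1,v_2,v_3)$ consistently with the labelling in \eqref{o2}. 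Once these bookkeeping points are handled, the linearization above should provide a uniform $\alpha$ on the neighborhood, completing (ii) and hence the equality.
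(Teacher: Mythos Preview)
Your linearization is equivalent to the paper's: the paper tracks the scalar recursion $s_{k+1}^2=\phi(s_k^2)$ with $\phi(s)=\tfrac14 s+4A^2/s$ and $|\phi'|\approx\tfrac12$ near the fixed point, while you phrase this as successive orthogonal projections onto isosceles lines at mutual angle $\pi/3$. Both give the contraction factor $\tfrac12$ and hence control $t_{k+1}/t_k$ for $k\ge 2$; the bookkeeping you flag (cyclic labelling, staying in the product of balls) is routine, and $P^\infty=P_3$ is indeed automatic for triangles.

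The gap is exactly where you locate it and then wave it away. The hypotheses $P^\infty\ne P^i$ and $a_3$ small do \emph{not} exclude the near-isosceles configurations. In your own coordinates: $\sigma^{(1)}$ is an arbitrary vector in the two-dimensional tangent space, $\sigma^{(2)}$ is its orthogonal projection onto the first isosceles line $L_1$, $t_1\asymp\mathrm{dist}(\sigma^{(1)},L_1)$, and $t_2\asymp|\sigma^{(2)}|$. These two quantities are independent, so inside any ball $\{|\sigma^{(1)}|<a_3\}$ you can take $\sigma^{(1)}$ at angle $\to 0$ to $L_1$ and get $t_1>0$ arbitrarily small while $t_2\asymp a_3$. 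Concretely, take sides $a=c+\epsilon$, $b=s_e+\delta$ with $0<\epsilon\ll\delta\ll a_3$: then $t_1\asymp|a-c|=\epsilon$ but $t_2\asymp|b-s_e|=\delta$, so $t_2^2/t_1^2\asymp(\delta/\epsilon)^2\to\infty$, and for generic such data $P^i\ne P_3$ for every $i$. Thus no universal $\alpha$ in the rate condition $\sum_{j\ge 2}t_j^2\le\alpha t_1^2$ can work on the whole neighbourhood, and your proposed resolution (``the hypotheses exclude pathological configurations'') is false as stated.

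The paper's own argument has the same blind spot: its chain of estimates for $t_k^2/t_{k-1}^2$ needs both $s_{k-1}^2=\phi(s_{k-2}^2)$ (forcing $s_0=b$) and $|s_{k-1}-s_{k-2}|\approx\tfrac12 t_{k-1}$ (forcing $s_0\in\{a,c\}$), which are incompatible at $k=2$ unless the base equals one of the legs. So your proposal matches the paper in spirit and in rigor; neither supplies a valid treatment of the first step, and the claimed equality appears to require either a different formulation of the rate condition or an additional argument specific to $k=2$.
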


\begin{proof}

Observe that for any triangle $P=P^1$, $P^\infty=P_3$ (this was initially noted by P\'olya). Now,
let the first side of the triangle generated by one symmetrization be $s_1$. Observe via the area constraint (Figure \ref{wqh34}) that

\begin{align*}
s_2^2&=\Big(\frac{1}{2} \sqrt{(\frac{b}{2})^2+h_1^2}\Big)^2+\Big(\frac{bh_1}{\sqrt{(\frac{b}{2})^2+h_1^2}}\Big)^2\\
&=\frac{1}{4}s_1^2+4\frac{A^2}{s_1^2}.
\end{align*}

In particular
assume without loss that $A=1$,  

$$
\phi(s):=\frac{1}{4}s+\frac{\alpha}{s}
$$

$$
\phi'(s)=\frac{1}{4}-\frac{\alpha}{s^2},
$$
s.t. when $s_e=\frac{2}{3^{1/4}},$ $\phi(s_e^{2})=s_e^2$. Observe that if   $s \in Z_a=(s_e^2-a_0, s_e^2+a_0),$ with $a_0>0$ small,

$$
|\phi'(s)| \approx \frac{1}{2}. 
$$
Set $s_0=s \in Z_a$,
$$
s_{k+1}:=\phi(s_k)
$$
so that $s_k \rightarrow s_e^{2}$;
next considering the previous when $s_{k}^2$ is replacing $s_k$, 
define
$$
p_{\pm}(t):=\sqrt{\xi^2+(\frac{b}{2}\pm t)^2}-\sqrt{\xi^2+(\frac{b}{2})^2},
$$ 
with small $t>0$,
$$
b \approx s_e
$$

$$
\xi \approx \frac{\sqrt{3}}{2}s_e.
$$ 
By the mean value theorem
$$p_{\pm}'(t_{\pm}) t=p_{\pm}(t),$$
where $t_{\pm} \in (0, t),$ therefore since $t$ is small 
$$
|p_{\pm}'(t_{\pm})| \approx \frac{1}{2},
$$
which yields $|s_{j+1}-s_j| \approx \frac{1}{2} t_{j+1}$ ($s_1$ is the first symmetrized side and $t_1$ therefore is associated to $s_1$). In particular
$$
\frac{\sum_{j \ge k} t_j^2}{t_{k-1}^2} = \frac{t_{k}^2}{t_{k-1}^2}+\frac{t_{k+1}^2}{t_{k-1}^2}+\dots+\frac{t_{k+l}^2}{t_{k-1}^2}+  \dots
$$

\begin{align*}
\frac{t_{k}^2}{t_{k-1}^2} &\approx \frac{(2(s_k-s_{k-1}))^2}{t_{k-1}^2} \\
&= \frac{(2(s_k-s_{k-1})(s_k+s_{k-1}))^2}{(t_{k-1}(s_k+s_{k-1}))^2}\\
&=\frac{(2(\phi(s_{k-1}^2)-\phi(s_{k-2}^2)))^2}{(t_{k-1}(s_k+s_{k-1}))^2}\\
&= \frac{(2(\phi'(\mu_k) (s_{k-1}^2-s_{k-2}^2)))^2}{(t_{k-1}(s_k+s_{k-1}))^2}\\
&\approx (\phi'(\mu_k))^2 (\frac{s_{k-1}+s_{k-2}}{s_k+s_{k-1}})^2\\
& \le \alpha (\frac{1}{2}+a)^2
\end{align*}
where $a>0$ is sufficiently small,
$$\mu_k \in (\min \{s_{k-1}^2, s_{k-2}^2\}, \max \{s_{k-1}^2, s_{k-2}^2\}),$$
$$
|s_{k-1}-s_{k-2}| \approx \frac{1}{2} t_{k-1};
$$
moreover, 
\begin{align*}
\frac{t_{k+l}^2}{t_{k-1}^2} &\approx \frac{(2(\phi(s_{k+l-1}^2)-\phi(s_{k+l-2}^2)))^2}{(t_{k-1}(s_{k+l}+s_{k+l-1}))^2}\\
& \approx \Pi_{m=1}^{l+1} (\phi'(\mu_{k, m}))^2 (\frac{s_{k-1}+s_{k-2}}{s_{k+l-1}+s_{k+l}})^{2} \le a_*((\frac{1}{2}+a)^2)^{l+1}
\end{align*}
$$\mu_{k,m} \in (\min\{s_{k+m-3}^2, s_{k+m-2}^2\}, \max\{s_{k+m-3}^2, s_{k+m-2}^2\}),$$
therefore
$$
\frac{\sum_{j \ge k} t_j^2}{t_{k-1}^2} \le \alpha_3
$$
whenever $t_{k-1}>0$ \& $P^\infty \neq P_i$ if $i \in \mathbb{N}$ with a universal $\alpha_3>0$ (in the case of $\mathcal{M}(3, 1)$, observe when $t_k=t_{k+1}=0$, $P^k=P_3$, therefore the convergence is in finitely many iterations). 
\end{proof}

\subsection{Explicit examples} \label{ra7}

\begin{figure}[htbp] \label{untitled6}
\centering
\includegraphics[width=.8 \textwidth]{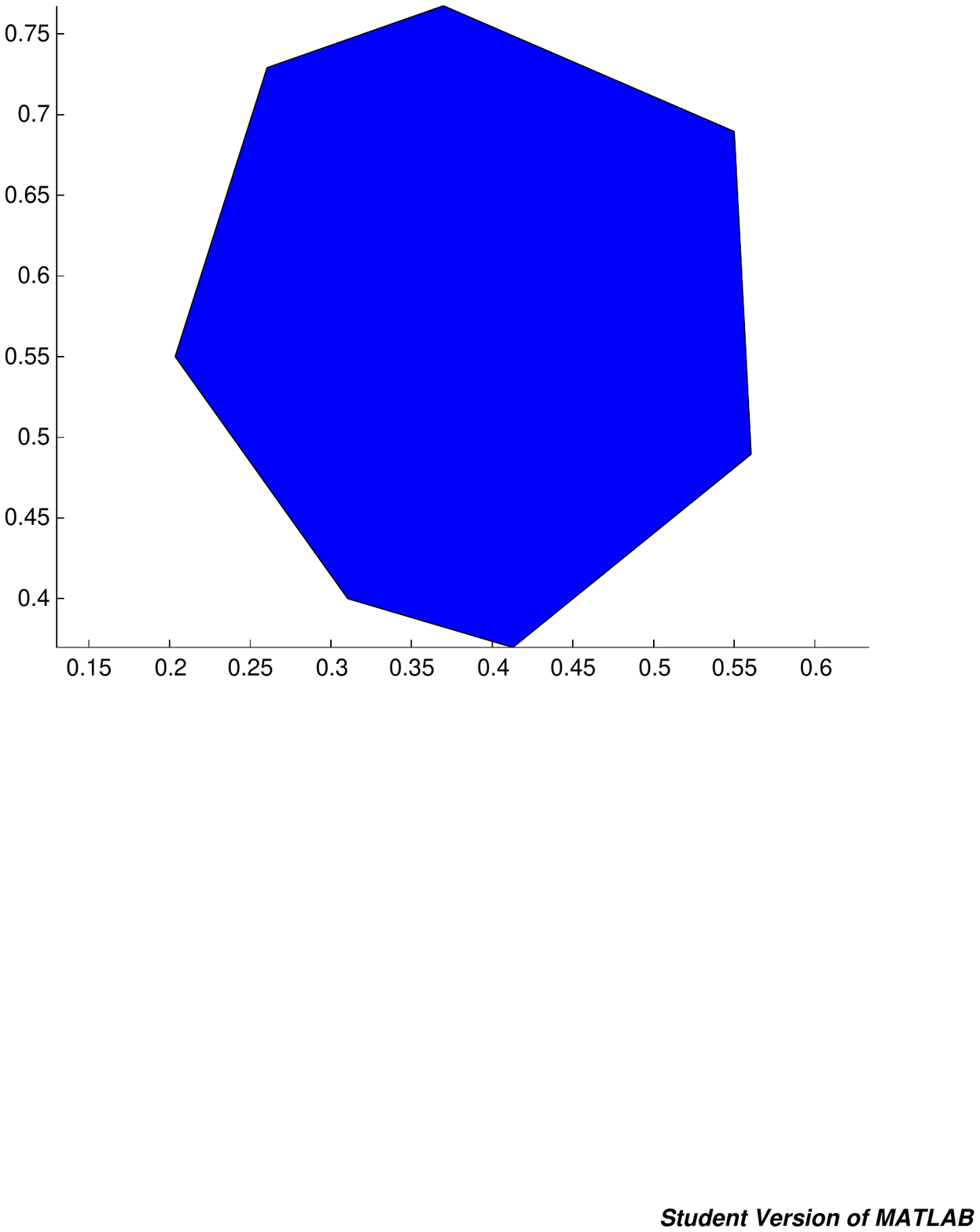}
\caption{}
\label{untitled63}
\end{figure}

\begin{figure}[htbp] \label{untitled26}
\centering
\includegraphics[width=.8 \textwidth]{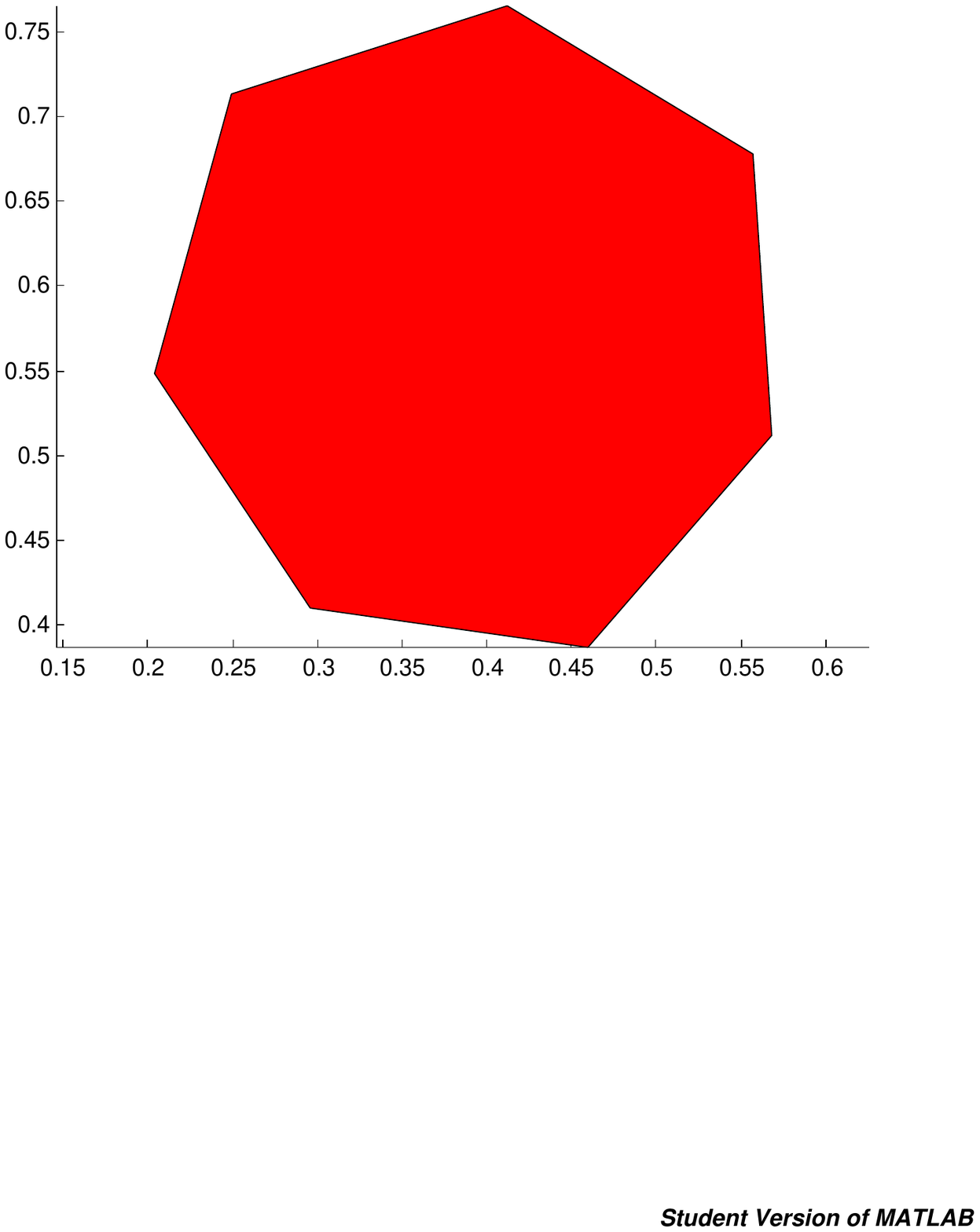}
\caption{}
\label{a12he}
\end{figure}

\begin{figure}[htbp] \label{untitled15}
\centering
\includegraphics[width=.8 \textwidth]{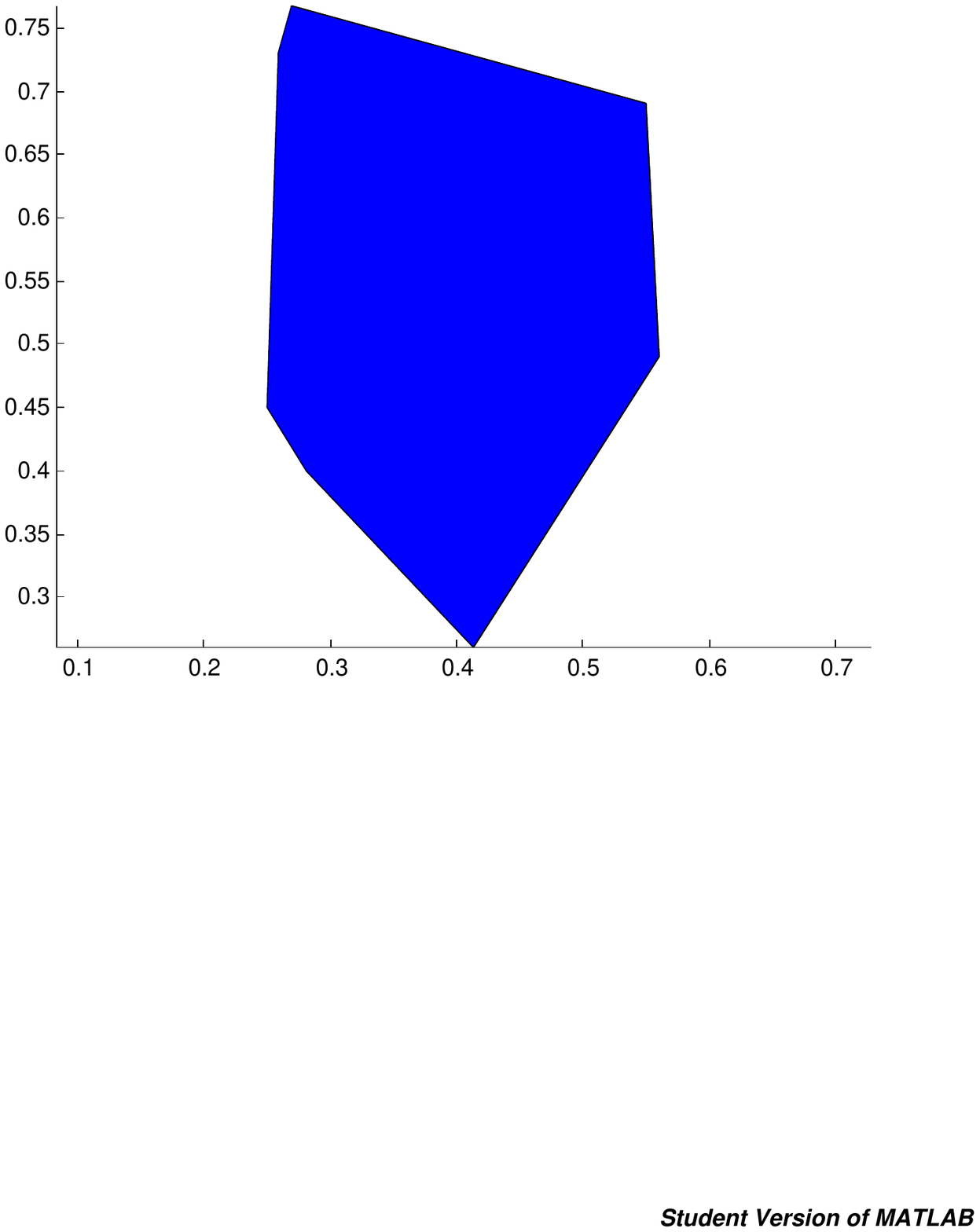}
\caption{}
\label{a12g}
\end{figure}

\begin{figure}[htbp] \label{untitled16}
\centering
\includegraphics[width=.8 \textwidth]{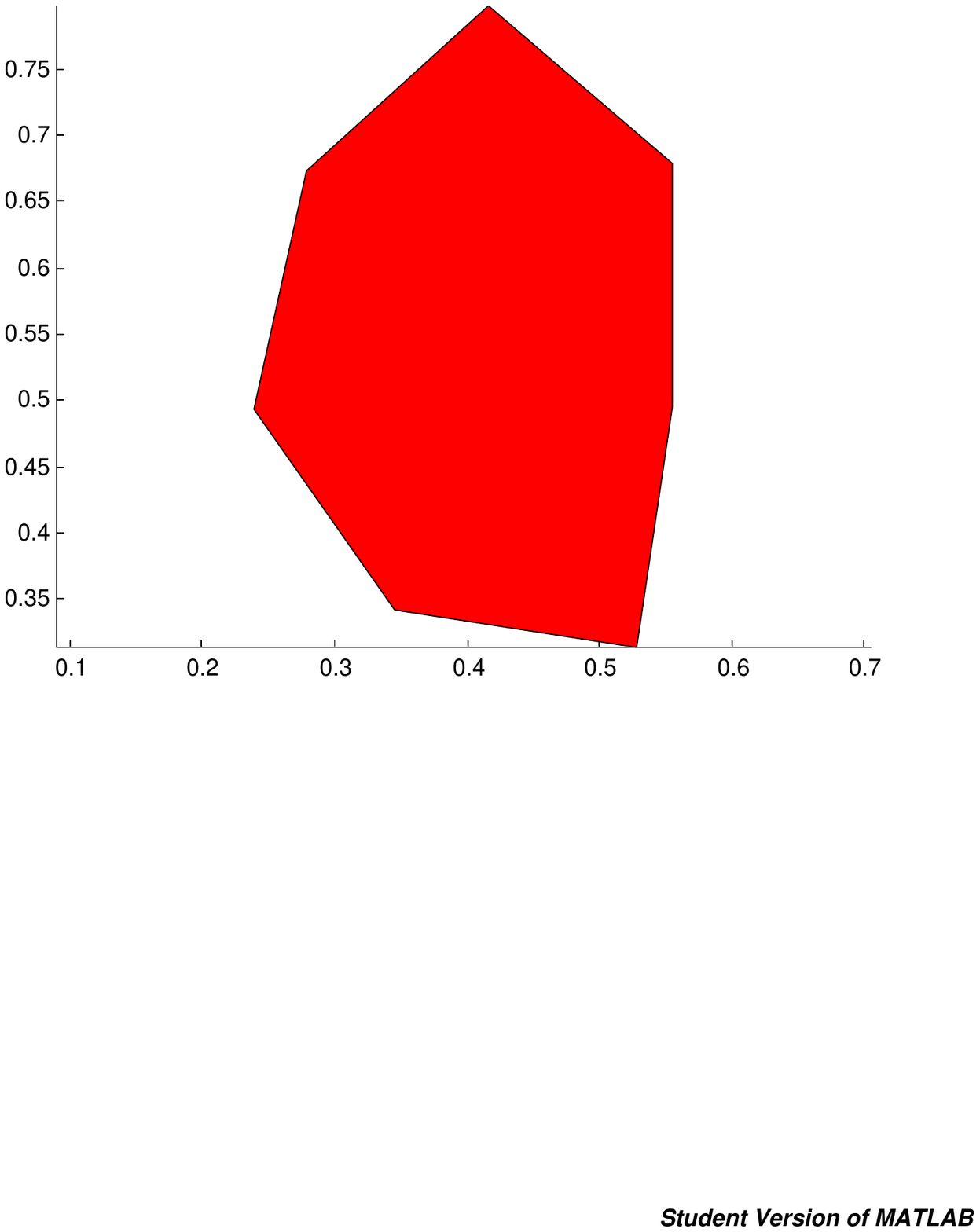}
\caption{}
\label{a12vf}
\end{figure}

The specific process of generating $P^\infty$ can be applied to any convex polygon. Therefore it is of interest to understand the specific flow. The following illuminates the algorithm if $n=7$: Figure \ref{untitled63} evolves into Figure \ref{a12he} in $27$ iterations (via MatLab). Note that the initial heptagon is not close to the regular convex heptagon, nevertheless the evolution is positive. Figure \ref{a12g} generates Figure  \ref{a12vf} after $55$ iterations (via MatLab) which is far from the regular cyclical heptagon, but it is almost equilateral. Observe that the initial heptagon in this case is far from the regular convex heptagon. Assuming $n$ is large, the cyclicity is imposed via $P_n \rightarrow B_1$, hence in the context of $n$ large, one has the expectation that many initial $n$-gons in a neighborhood of $P_n$ evolve to $P_n$. \\

\bibliographystyle{amsalpha}
\bibliography{References}

\providecommand{\bysame}{\leavevmode\hbox to3em{\hrulefill}\thinspace}
\providecommand{\MR}{\relax\ifhmode\unskip\space\fi MR }
\providecommand{\MRhref}[2]{%
  \href{http://www.ams.org/mathscinet-getitem?mr=#1}{#2}
}
\providecommand{\href}[2]{#2}
\begin{thebibliography}{BDPV15}

\bibitem[AF06]{MR2264470}
Pedro Antunes and Pedro Freitas, \emph{New bounds for the principal {D}irichlet
  eigenvalue of planar regions}, Experiment. Math. \textbf{15} (2006), no.~3,
  333--342. \MR{2264470}

\bibitem[ALS13]{MR2999297}
John Andersson, Erik Lindgren, and Henrik Shahgholian, \emph{Optimal regularity
  for the no-sign obstacle problem}, Comm. Pure Appl. Math. \textbf{66} (2013),
  no.~2, 245--262. \MR{2999297}

\bibitem[BB22]{oeq4}
Beniamin Bogosel and Dorin Bucur, \emph{On the polygonal {F}aber-{K}rahn
  inequality}, arXiv:2203.16409 (2022).

\bibitem[BDPV15]{MR3357184}
Lorenzo Brasco, Guido De~Philippis, and Bozhidar Velichkov, \emph{Faber-{K}rahn
  inequalities in sharp quantitative form}, Duke Math. J. \textbf{164} (2015),
  no.~9, 1777--1831. \MR{3357184}

\bibitem[CM16]{MR3550852}
Marco Caroccia and Francesco Maggi, \emph{A sharp quantitative version of
  {H}ales' isoperimetric honeycomb theorem}, J. Math. Pures Appl. (9)
  \textbf{106} (2016), no.~5, 935--956. \MR{3550852}

\bibitem[Fab23]{MR2w264470op}
G.~Faber, \emph{Beweis, dass unter allen homogenen membranen von gleicher
  fläche und gleicher spannung die kreisförmige den tiefsten grundton gibt},
  Sitzungsberichte der Bayerischen Akademie der Wissenschaften (1923),
  228--249.

\bibitem[FMP08]{MR2456887}
Nicola Fusco, Francesco Maggi, and Aldo Pratelli, \emph{The sharp quantitative
  isoperimetric inequality}, Ann. of Math. (2) \textbf{168} (2008), no.~3,
  941--980. \MR{2456887}

\bibitem[FMP10]{MR2672283}
Alessio Figalli, Francesco Maggi, and Aldo Pratelli, \emph{A mass
  transportation approach to quantitative isoperimetric inequalities}, Invent.
  Math. \textbf{182} (2010), no.~1, 167--211. \MR{2672283}

\bibitem[Fre07]{MR2264470op}
Pedro Freitas, \emph{Precise bounds and asymptotics for the first dirichlet
  eigenvalue of triangles and rhombi}, Journal of Functional Analysis
  \textbf{251} (2007), 376--398.

\bibitem[Gri03]{MR2264470nop}
Pavel Grinfeld, \emph{Boundary perturbation of the {L}aplace eigenvalues and
  applications to electron bubbles and polygons}, Thesis (Ph.
  D.)--Massachusetts Institute of Technology, Dept. of Mathematics (2003).

\bibitem[Gri13]{Caltens}
\bysame, \emph{Introduction to tensor analysis and the calculus of moving
  surfaces}, Springer, DOI 10.1007/978-1-4614-7867-6 (2013).

\bibitem[GT01]{gilbarg:01}
David Gilbarg and Neil~S. Trudinger, \emph{Elliptic partial differential
  equations of second order}, Springer, 2001.

\bibitem[GWW92]{MR1181812}
C.~Gordon, D.~Webb, and S.~Wolpert, \emph{Isospectral plane domains and
  surfaces via {R}iemannian orbifolds}, Invent. Math. \textbf{110} (1992),
  no.~1, 1--22. \MR{1181812}

\bibitem[Hen06]{MR2251558}
Antoine Henrot, \emph{Extremum problems for eigenvalues of elliptic operators},
  Frontiers in Mathematics, Birkh\"{a}user Verlag, Basel, 2006. \MR{2251558}

\bibitem[IN15]{MR3327086}
Emanuel Indrei and Levon Nurbekyan, \emph{On the stability of the polygonal
  isoperimetric inequality}, Adv. Math. \textbf{276} (2015), 62--86.
  \MR{3327086}

\bibitem[Ind16]{MR3487241}
Emanuel Indrei, \emph{A sharp lower bound on the polygonal isoperimetric
  deficit}, Proc. Amer. Math. Soc. \textbf{144} (2016), no.~7, 3115--3122.
  \MR{3487241}

\bibitem[Kra25]{MR1512244}
E.~Krahn, \emph{\"{U}ber eine von {R}ayleigh formulierte {M}inimaleigenschaft
  des {K}reises}, Math. Ann. \textbf{94} (1925), no.~1, 97--100. \MR{1512244}

\bibitem[LL64]{pPZ8p}
L.D. Landau and E.M. Lifshitz, \emph{Statistical physics}, Nauka, New York
  (1964).

\bibitem[Mil64]{MR162204}
John Milnor, \emph{Eigenvalues of the {L}aplace operator on certain manifolds},
  Proc. Nat. Acad. Sci. U.S.A. \textbf{51} (1964), 542. \MR{162204}

\bibitem[P\'48]{MR26817}
George P\'{o}lya, \emph{Torsional rigidity, principal frequency, electrostatic
  capacity and symmetrization}, Quart. Appl. Math. \textbf{6} (1948), 267--277.
  \MR{26817}

\bibitem[PS51]{pPZp}
G.~P\'{o}lya and G.~Szeg\"{o}, \emph{Isoperimetric inequalities in mathematical
  physics}, Annals of Mathematical Studies, Princeton University Press
  \textbf{27} (1951).

\bibitem[Ray94]{pPZpoq}
Lord Rayleigh, \emph{The theory of sound}, 2nd edition, London, 1894/96 (1894).

\bibitem[Siu07]{MR2369934}
B.~Siudeja, \emph{Sharp bounds for eigenvalues of triangles}, Michigan Math. J.
  \textbf{55} (2007), no.~2, 243--254. \MR{2369934}

\bibitem[Siu10]{MR2779073}
\bysame, \emph{Isoperimetric inequalities for eigenvalues of triangles},
  Indiana Univ. Math. J. \textbf{59} (2010), no.~3, 1097--1120. \MR{2779073}

\bibitem[SZ04]{MR2052355}
Alexander~Yu. Solynin and Victor~A. Zalgaller, \emph{An isoperimetric
  inequality for logarithmic capacity of polygons}, Ann. of Math. (2)
  \textbf{159} (2004), no.~1, 277--303. \MR{2052355}

\end{thebibliography}

\end{document}